\title{Conditional Joint Probability Distributions of First Exit Times to Overlapping Absorbing Sets of the Mixture of Markov Jump Processes}
 \author{B.A. Surya\footnote{School of Mathematics and Statistics, Victoria University of Wellington, Gate 6 Kelburn PDE, Wellington 6140, New Zealand. Email address: budhi.surya@vuw.ac.nz }\\ School of Mathematics and Statistics \\ Victoria University of Wellington, New Zealand }
\date{}
\newtheorem{theorem}{Theorem}[section]
\newtheorem{theo}[theorem]{Theorem}
\newtheorem{lem}[theorem]{Lemma}
\newtheorem{cor}[theorem]{Corollary}
\newtheorem{prop}[theorem]{Proposition}
\newtheorem{Rem}[theorem]{Remark}
\newtheorem{Ex}[theorem]{Example}
\newcommand{\exit}{{\mbox{\, \vspace{3mm}}} \hfill\mbox{$\square$}}
\newcommand{\subf}[2]{%
  {\small\begin{tabular}[t]{@{}c@{}}
  #1\\#2
  \end{tabular}}%
}
\numberwithin{equation}{section}
\date{12 May 2018}
\begin{document}

\maketitle \pagestyle{myheadings} \markboth{B.A. Surya}{Conditional Joint Probability Distributions of First Exit Times}

\begin{abstract}
New results on conditional joint probability distributions of first exit times are presented for a continuous-time stochastic process defined as the mixture of Markov jump processes moving at different speeds on the same finite state space, while the mixture occurs at a random time. Such mixture was first proposed by Frydman \cite{Frydman2005} and Frydman and Schuermann \cite{Frydman2008} as a generalization of the mover-stayer model of Blumen et at. \cite{Blumen}, and was recently extended by Surya \cite{Surya2018}, in which explicit distributional identities of the process are given, in particular in the presence of an absorbing state. We revisit \cite{Surya2018} for a finite mixture with overlapping absorbing sets. The contribution of this paper is two fold. First, we generalize distributional properties of the mixture discussed in \cite{Frydman2008} and \cite{Surya2018}. Secondly, we give distributional identities of the first exit times explicitly in terms of intensity matrices of the underlying Markov processes and the Bayesian updates of switching probability and of the probability distribution of states, despite the fact that the mixture itself is non-Markov. They form non-stationary functions of time and have the ability to capture heterogeneity and path dependence when conditioning on the available information (either full or partial) of the process. In particular, initial profile of the distributions forms of a generalized mixture of multivariate phase-type distributions of Assaf et al. \cite{Assaf1984}. When the underlying processes move at the same speed, in which case the mixture becomes a simple Markov process, these features are removed, and the initial distributions reduce to \cite{Assaf1984}. Some explicit and numerical examples are discussed to illustrate the main results.

\medskip

\textbf{MSC2010 Subject Classification:} 60J20, 60J27, 60J28, 62N99

\textbf{Keywords}: Markov jump process, mixture of Markov jump processes, first exit times, conditional multivariate distributions, phase-type model

\end{abstract}

\section{Introduction}
Markov process has been one of the most important probabilistic tools in modeling complex stochastic systems dynamics. It has been widely used in variety of applications across various fields such as, among others, in modeling vegetation dynamics (Balzter \cite{Balzter}), demography (Nowak \cite{Nowak}), in marketing to model consumer relationship (Berger and Nasr \cite{Berger} and Pfeifer and Carraway \cite{Pfeifer}) and to identify substitutions behavior of customers in assortment problem (Blanchet et al. \cite{Blanchet}), in describing credit rating transitions used in many credit risk and pricing applications (Jarrow and Turnbull \cite{Jarrow1995}, Jarrow et al. \cite{Jarrow1997}, Bielecki et al. \cite{Bielecki}), in queueing networks and performance engineering (Bolch et al. \cite{Bolch}).

One of the key variables in the analysis of stochastic systems is the time until an event occurs (the lifetime of systems), for example, the lifetime of a corporate bond \cite{Jarrow1997}, customer relationship (Ma et al. \cite{MingMa}), networks \cite{Bolch}, etc. It represents the first exit time to an absorbing set of the underlying Markov process. Its distribution is usually referred to as the phase-type distribution, which was first introduced in univariate form by Neuts \cite{Neuts1975} in 1975 as generalization of Erlang distribution. It has dense property, which can approximate any distribution of positive random variables arbitrarily well, and has closure property under finite convex mixtures and convolutions. When the jumps of compound Poisson process has phase-type distribution, it results in a dense class of L\'evy processes, see Asmussen \cite{Asmussen2003}. The advantage of working under phase-type distribution is that it allows some analytically tractable results in applications. To mention some, in option pricing (Asmussen et al. \cite{Asmussen2004}), actuarial science (Albrecher and Asmussen \cite{Asmussen2010}, Rolski et al. \cite{Rolski}, Zadeh et al. \cite{Zadeh}), in survival analysis (Aalen \cite{Aalen1995}, Aalen and Gjessing \cite{Aalen2001}), in queueing theory (Chakravarthy and Neuts \cite{Chakravarthy}, Asmussen \cite{Asmussen2003}), in reliability theory (Assaf and Levikson \cite{Assaf1982}, Okamura and Dohi \cite{Okamura}).

The phase-type distribution $\overline{F}$ is expressed in terms of a Markov jump process $\{X_t\}_{t\geq 0}$ with a finite state space $\mathbb{S}=E\cup \{\Delta\}$, where for some integer $n\geq 1$, $E=\{i: i=1,...,n\}$ and $\Delta$ represent respectively the transient and absorbing states. We also refer to $\Delta$ as the $(n+1)$th element of $\mathbb{S}$, i.e., $\Delta=n+1$. The first exit time of $X$ to the absorbing state and its distribution are defined by
\begin{equation}\label{eq:DefTime}
\tau=\inf\{t\geq 0: X_t=\Delta\} \quad \textrm{and} \quad \overline{F}(t)=\mathbb{P}\{\tau > t\}.
\end{equation}

In view of credit risk applications, the state space $\mathbb{S}$ represents the possible credit classes, with $1$ being the highest (\textrm{Aaa} in Moody's rankings) and $n$ being the lowest (\textrm{C} in Moody's rankings), whilst the absorbing state $\Delta$ represents bankruptcy, \textrm{D}. The distribution $\pi_k$ represents the proportion of homogeneous bonds in the rating $k$. We refer to \cite{Jarrow1995} and \cite{Jarrow1997} and literature therein for details.

Unless stated otherwise, we denote by $\widetilde{\boldsymbol{\pi}}=(\boldsymbol{\pi},\pi_{\Delta})$ the initial probability of starting $X$ in any of the $n + 1$ phases. For simplicity, we assume that $\pi_{\Delta}=0$, so that $\mathbb{P}\{\tau>0\}=1$. The speed at which the Markov process moves along the state space $\mathbb{S}$ is described by an intensity matrix $\mathbf{Q}$. This matrix has block partition according to the process moving in the transient state $E$ and in the absorbing state $\Delta$, which admits the following block-partitioned form:
\begin{equation}\label{eq:MatQ}
\mathbf{Q} = \left(\begin{array}{cc}
  \mathbf{B} & -\mathbf{B}\mathbb{1} \\
  \mathbf{0} & 0 \\
\end{array}\right),
\end{equation}
with $\mathbb{1}=(1,...,1)^{\top}$, as the rows of the intensity matrix $\mathbf{Q}$ sums to zero. That is to say that the entry $q_{ij}$ of the matrix $\mathbf{Q}$ satisfies the following properties:
\begin{equation}\label{eq:matq}
q_{ii}\leq 0, \; \; q_{ij}\geq 0, \; \; \sum_{j\neq i} q_{ij}=-q_{ii}=q_i, \quad (i,j)\in \mathbb{S}.
\end{equation}
See Chapter II of Asmussen \cite{Asmussen2003} for more details on the Markov jump processes. Since  the states $E$ is transient and that $-\mathbf{B}\mathbb{1}$ is a non-negative vector and $\mathbb{1}^{\top}\mathbf{B}\mathbb{1}<0$, the condition (\ref{eq:matq}) implies that $\mathbf{B}$ is a negative definite matrix. See Section II4d of \cite{Asmussen2003}. The matrix $\mathbf{B}$ is known as the phase generator matrix of $\mathbf{Q}$. The absorption is certain if and only if $\mathbf{B}$ is nonsingular, see Neuts \cite{Neuts1981}.

Following Theorem 3.4 and Corollary 3.5 in \cite{Asmussen2003} and by the homogeneity of $X$, the transition probability matrix $\mathbf{P}(t)$ of $X$ over the period of time $(0,t)$ is
\begin{equation}\label{eq:transprob}
\mathbf{P}(t)= \exp(\mathbf{Q} t), \quad t\geq 0.
\end{equation}
The entry $q_{ij}$ has probabilistic interpretation: $1/(-q_{ii})$ is the expected length of time that $X$ remains in state $i\in E$, and $q_{ij}/q_{i}$ is the probability that when a transition out of state $i$ occurs, it is to state $j\in\mathbb{S}$, $j\neq i$. The representation of the distribution $\overline{F}$ is uniquely specified by $(\boldsymbol{\pi},\mathbf{B})$. We refer among others to Neuts \cite{Neuts1981} and Asmussen \cite{Asmussen2003} for details. Following \cite{Neuts1981} and Proposition 4.1 \cite{Asmussen2003},
\begin{equation}\label{eq:DistrDefTime}
 \overline{F}(t)=\boldsymbol{\pi}^{\top} e^{\mathbf{B} t} \mathbb{1} \quad \textrm{and} \quad f(t)=-\boldsymbol{\pi}^{\top} e^{\mathbf{B} t} \mathbf{B}\mathbb{1}.
 \end{equation}

The extension of (\ref{eq:DistrDefTime}) to multivariate form was proposed by Assaf et al. \cite{Assaf1984} and later by Kulkarni \cite{Kulkarni}. Following \cite{Assaf1984}, let $\Gamma_1,...,\Gamma_p$ be nonempty stochastically closed subsets of $\mathbb{S}$ such that $\cap_{k=1}^p \Gamma_k$ is a proper subset of $\mathbb{S}$. ($\Gamma_i\subset \mathbb{S}$ is said to be stochastically closed if once $X$ enters $\Gamma_i$, it never leaves.) We assume without loss of generality that $\cap_{k=1}^p \Gamma_k$ consists of only the absorbing state $\Delta$, i.e., $\cap_{k=1}^p \Gamma_k=\Delta$. Since $\Gamma_k$ is stochastically closed, necessarily $q_{ij}=0$ if $i\in\Gamma_k$ and $j\in\Gamma_k^c$.

The first exit time of $X$ to the stochastically closed set $\Gamma_k$ is defined by
\begin{equation}\label{eq:MultiPH}
\tau_k:=\inf\{t\geq 0: X_t \in \Gamma_k\}.
\end{equation}
The joint distribution $\overline{F}$ of $\{\tau_k\}$ is called the multivariate phase type distribution, see \cite{Assaf1984}. Let $t_{i_p}\geq\dots\geq t_{i_1}\geq 0$ be the ordering of $(t_1,...,t_p)\in\mathbb{R}_+^p$. Following \cite{Assaf1984},
\begin{equation}\label{eq:MPH}
\begin{split}
\overline{F}(t_1,...,t_p)=&\mathbb{P}\{\tau_1 > t_1,..., \tau_p > t_p)\\
=&\boldsymbol{\pi}^{\top} \prod_{k=1}^p \exp\big(\mathbf{B}(t_{i_k}-t_{i_{k-1}})\big)\mathbf{H}_{i_k}\mathbb{1},
\end{split}
\end{equation}
where $\mathbf{H}_{i_k}$ is $(n\times n)$ diagonal matrix whose $i$th diagonal element, for $i=1,...,n$, equals $1$ when $i\in\Gamma_{i_k}^c$ and is zero otherwise. As before, we assume that $\widetilde{\boldsymbol{\pi}}$ has zero mass on $\Delta$ and $\pi_i\neq 0$ for $i\in\bigcap_{k=1}^p \Gamma_k^c$ implying that $\mathbb{P}\{\tau_1>0,..., \tau_p>0)=1$.

\pagebreak

The multivariate distribution (\ref{eq:MPH}) has found various applications, e.g., in modeling credit default contagion (Herbertsson \cite{Herbertsson}, Bielecki et al. \cite{Bielecki}), in modeling aggregate loss distribution in insurance (Berdel and Hipp \cite{Berdel}, Asimit and Jones \cite{Asimit} and Willmot and Woo \cite{Willmot}), and in Queueing theory (Badila et al. \cite{Badila}).

Due to spatial homogeneity of the Markov process, the distributions (\ref{eq:DistrDefTime}) and (\ref{eq:MPH}) have stationary property and are unable to capture heterogeneity and available information of its past. In their empirical works, Frydman \cite{Frydman2005}, Frydman and Schuermann \cite{Frydman2008} found that bonds of the same credit rating, represented by the state space of the Markov process, can move at different speeds to other ratings. In addition to this observation, the inclusion of past credit ratings improves out-of-sample prediction of the Nelson-Aalen estimate of credit default intensity. These empirical findings suggest that the credit rating dynamics \cite{Jarrow1997} can be represented by a mixture of Markov jump processes moving at different speeds, where the mixture itself is non-Markov. However, the analyses performed in \cite{Frydman2005}, \cite{Frydman2008} were based on knowing the initial and current state of the process. Surya \cite{Surya2018} revisited the mixture model \cite{Frydman2005}, \cite{Frydman2008} and gave explicit distributional identities of the mixture, in particular in the presence of an absorbing state.

This paper attempts to extend \cite{Surya2018} by relaxing the assumptions \cite{Frydman2005}, \cite{Frydman2008} for a finite mixture of Markov jump processes with overlapping absorbing sets moving at different speeds. By doing so, we give distributional properties of the mixture process $X$ in general case and derive the joint probability distributions of the first exit times $\{\tau_k\}$ (\ref{eq:MultiPH}) of $X$, conditional on the available (either full or partial) information $\mathcal{F}_{t,i}=\mathcal{F}_{t-}\cup\{X_t=i\}$, with $\mathcal{F}_{t-}=\{X_s: 0\leq s \leq t-\}$, of the process. Using the results, we derive the joint probability distributions of $\{\tau_k\}$ conditional on the information $\mathcal{G}_t:=\mathcal{F}_{t-}\cup\{X_t\neq \Delta\}$ knowing all previous observations of the process and given that it is still ''alive'' at a given time $t\geq 0$, i.e., $\mathcal{G}_{t}=\bigcup_{i\in E}\mathcal{F}_{t,i}$. We write $\mathcal{G}_t=\mathcal{F}_{t-}$ if the only available information is the past observation $\mathcal{F}_{t-}$. Conditional on $\mathcal{F}_{t,i}$ and $\mathcal{G}_t$, we derive explicit formula for
\begin{equation}\label{eq:MPHnew}
\begin{split}
\overline{F}_{i,t}(t_1,...,t_p)=&\mathbb{P}\big\{\tau_1> t_1,..., \tau_p > t_p \big\vert \mathcal{F}_{t,i}\big\}\\
\overline{F}_t(t_1,...,t_n)=&\mathbb{P}\big\{\tau_1> t_1,..., \tau_p > t_p \big\vert \mathcal{G}_{t}\big\},
\end{split}
\end{equation}
for the mixture process $X$, with $n\geq 1$, $i\in E\subseteq\mathbb{S}$ and $0\leq t\leq \min\{t_1,...,t_p\}$. Unless the underlying Markov processes move at the same speed, we show that the initial profile of the joint distributions (\ref{eq:MPHnew}) forms a generalized mixture of (\ref{eq:MPH}). Under partial information, given the process is still alive in the long run, we give the corresponding limiting (stationary) distributions of (\ref{eq:MPHnew}) as $t\rightarrow \infty$.

From the credit risk point of view (see for e.g.  \cite{Jarrow1997}, \cite{Bielecki2002}, \cite{Bielecki}, \cite{Herbertsson}), the quantity $\overline{F}_{i,t}(t_1,...,t_p)$ describes the joint probability distribution of first exit times $\{\tau_k\}$ of $i-$rated bonds, due to cause-specific of exits (default, prepayment, calling back, debt retirement, etc), conditional on the credit rating history up to a given time $t$, whilst the function $\overline{F}_{t}(t_1,...,t_p)$ determines the joint probability distribution of the bonds' exit times $\{\tau_k\}$ across credit ratings viewed at time $t$. In the framework of competing risks (see for e.g. Pintilie \cite{Pintilie}), for the observed exit time $\tau:=\min\{\tau_1,...,\tau_p\}$ and the reason of exit $\boldsymbol{\xi}=\textrm{argmin}\{\tau_1,...,\tau_p\}$, the probability $\mathbb{P}\{t\leq \tau\leq s,\boldsymbol{\xi}=1\vert \mathcal{F}_{t,i}\}$ determines the proportion of $i-$rated bonds exiting by type $1$ from the credit portfolio within time interval $[t,s]$, whilst $\mathbb{P}\{t\leq \tau\leq s,\boldsymbol{\xi}=1\vert \mathcal{G}_{t}\}$ represents the percentage of bonds exiting by type $1$.

The organization of this paper is as follows. Section 2 discusses distributional properties of the Markov mixture process which extend the results of \cite{Frydman2008} and \cite{Surya2018}, in particular on the Bayesian update on the probability of starting the process in any state at given time $t\geq 0$. The main contributions of this paper are given in Section 3, where explicit forms of the conditional probability distributions and their Laplace transforms are presented. Some explicit examples are discussed in Section 4, in which we show that the exit times $\{\tau_k\}$ are independent under the Markov model, but not necessarily for the mixture model. Also in this section, we discuss numerical examples of the main results for bivariate distributions of birth-death mixture processes. Section 5 concludes this paper.

\begin{figure}
\begin{center}
  \begin{tikzpicture}[font=\sffamily]

        \tikzset{node style/.style={state,
                                    minimum width=1.5cm,
                                    line width=1mm,
                                    fill=gray!20!white}}

          \tikzset{My Rectangle/.style={rectangle, draw=brown, fill=yellow, thick,
    prefix after command= {\pgfextra{\tikzset{every label/.style={blue}}, label=below}}
    }
}

          \tikzset{My Rectangle2/.style={rectangle,draw=brown,  fill=yellow, thick,
    prefix after command= {\pgfextra{\tikzset{every label/.style={blue}}, label=below}}
    }
}

          \tikzset{My Rectangle3/.style={rectangle, draw=brown, fill=yellow, thick,
    prefix after command= {\pgfextra{\tikzset{every label/.style={blue}}, label=below}}
    }
}

        \node[node style] at (2, 0)     (s1)     {$J_1$};
        \node[node style] at (7, 0)     (s2)     {$J_2$};

      \node [My Rectangle3, label={initial}] at  ([shift={(-5em,0em)}]s1.west) (p0) {$\boldsymbol{\pi}$};

        \node [My Rectangle, label={regime} ] at  ([shift={(3em,-4em)}]s1.south) (g1) {$\phi=2, X^{(2)}$};
          \node [My Rectangle, label={regime} ] at  ([shift={(-3em,-4em)}]s1.south)  (g2) {$\phi=1, X^{(1)}$};

          \node [My Rectangle2] at  ([shift={(3em,-6em)}]s1.south) {$1-s_{j_1}$};
          \node [My Rectangle2] at  ([shift={(-3em,-6em)}]s1.south) {$s_{j_1}$};

         \node [My Rectangle, label={regime} ] at  ([shift={(3em,-4em)}]s2.south) (g3) {$\phi=2, X^{(2)}$};
          \node [My Rectangle, label={regime} ] at  ([shift={(-3em,-4em)}]s2.south)  (g4) {$\phi=1, X^{(1)}$};

           \node [My Rectangle2] at  ([shift={(3em,-6em)}]s2.south) {$1-s_{j_2}$};
          \node [My Rectangle2] at  ([shift={(-3em,-6em)}]s2.south) {$s_{j_2}$};

        \draw[every loop,
              auto=right,
              line width=1mm,
              >=latex,
              draw=orange,
              fill=orange]

            (s1)  edge[bend right=20, auto=left] node {$q_{12}^{(1)}/q_{12}^{(2)}$} (s2)
            (s1)  edge[loop above]                     node {$q_{11}^{(1)}/q_{11}^{(2)}$} (s1)
            (s2)  edge[loop above]                     node {$q_{22}^{(1)}/q_{22}^{(2)}$} (s2)
            (s2)  edge[bend right=20]                node {$q_{21}^{(1)}/q_{21}^{(2)}$}  (s1)

            (s1) edge node {} (g1)
            (s1) edge node {} (g2)

            (s2) edge node {} (g3)
            (s2) edge node {} (g4)

            (p0) edge node {} (s1);

 \end{tikzpicture}
 \caption{State diagram of the Markov mixture process (\ref{eq:mixture}) with $m=2$.}\label{fig:mixture}
\end{center}
\end{figure}
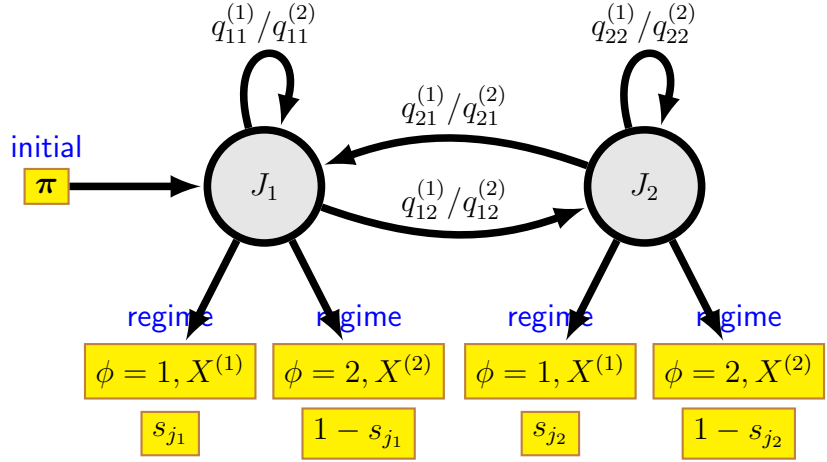


\section{Mixture of Markov jump processes}

Throughout the remaining of this paper we denote by $X=\{X_t^{(\phi)}, t\geq0\}$ the Markov mixture process, which is a continuous-time stochastic process defined as a finite mixture of Markov jump processes $X^{(k)}=\{X_t^{(k)}: t\geq 0\}$, with $k=1,\dots,m$, whose intensity matrices are given by $\{\mathbf{Q}^{(k)}\}$. We assume that the underlying Markov processes $\{X^{(k)}\}$ have right-continuous sample paths, and are defined on the state space $\mathbb{S}=\{1,\dots,n+1\}$. It is defined following \cite{Surya2018} by
\begin{equation}\label{eq:mixture}
X=
\begin{cases}
X^{(1)}, & \phi=1\\
\vdots \\
X^{(m)}, & \phi=m
\end{cases}
\end{equation}
where the variable $\phi$ represents the speed regimes, assumed to be unobservable.

\pagebreak

For a given initial state $i_0\in\mathbb{S}$, there is a separate mixing probability
\begin{equation}\label{eq:portion}
s_{i_0}^{(k)}=\mathbb{P}\{\phi=k \vert X_0=i_0\} \quad \textrm{with}\quad \sum_{k=1}^m s_{i_0}^{(k)}=1,
\end{equation}
and $0\leq s_{i_0}^{(k)} \leq 1$. The quantity $s_{i_0}^{(k)}$ has the interpretation as the proportion of population (e.g. bonds) with initial state $i_0$ evolving w.r.t to $X^{(k)}$. In general, $X^{(k)}$ and $X^{(l)}$, $k\neq l$, have different expected length of occupation time of a state $i$, i.e., $1/q_{i}^{(k)}\neq 1/q_{i}^{(l)}$, and have different probability of leaving the state $i\in E$ to state $j\in\mathbb{S}$, $j\neq i$, i.e. $q_{ij}^{(k)}/q_{i}^{(k)}\neq q_{ij}^{(l)}/q_{i}^{(l)}$. Note that we have used $q_i^{(k)}$ and $q_{ij}^{(k)}$ to denote the negative of the $i$th diagonal element and the $(i,j)$ entry of $\mathbf{Q}^{(k)}$.

Markov mixture process is a generalization of mover-stayer model, a mixture of two discrete-time Markov chains proposed by Blumen et al \cite{Blumen} in 1955 to model population heterogeneity in jobs labor market. In the mover-stayer model \cite{Blumen}, the population of workers consists of stayers (workers who always stay in the same job category, $\mathbf{Q}^{(1)}=\mathbf{0}$) and movers (workers who move to other job according to a stationary Markov chain with intensity matrix $\mathbf{Q}^{(2)}$). Estimation of the mover-stayer model was discussed in Frydman \cite{Frydman1984}. Frydman \cite{Frydman2005} generalized the model to a finite mixture of Markov chains moving with different speeds. Frydman and Schuermann \cite{Frydman2008} later on used the result for the mixture of two Markov jump processes moving with intensity matrices $\mathbf{Q}^{(1)}$ and $\mathbf{Q}^{(2)}=\boldsymbol{\Psi}\mathbf{Q}^{(1)}$, where $\boldsymbol{\Psi}$ is a diagonal matrix, to model the dynamics of firms' credit ratings. Depending on whether $0=\psi_i:=[\boldsymbol{\Psi}]_{i,i}$,  $0<\psi_i<1$, $\psi_i>1$ or $\psi_i=1$, $X^{(2)}$ never moves out of state $i$ (the mover-stayer model), moves out of state $i$ at lower rate, higher rate or at the same rate, subsequently, than that of $X^{(1)}$. If $\psi_i=1$, for all $i\in \mathbb{S}$, the mixture process $X$ reduces to a simple Markov jump process $X^{(1)}$.

Figure \ref{fig:mixture} illustrates the transition of $X$ between states $J_1$ and $J_2$. When $X$ is observed in state $J_1$, it would stay in the state for an exponential period of time with intensity $q_{j_1}^{(1)}$ or $q_{j_1}^{(2)}$ before moving to $J_2$ with probability $q_{j_1,j_2}^{(1)}/q_{j_1}^{(1)}$ or $q_{j_1,j_2}^{(2)}/q_{j_1}^{(2)}$, depending on whether it is driven by the Markov process $X^{(1)}$ or $X^{(2)}$.

\subsection{Distributional properties}

Recall that the process $X$ (\ref{eq:mixture}) repeatedly changes its speed randomly in time according to the speed rate $\mathbf{Q}^{(k)}$. The speed regime, represented by the variable $\phi$, is however not directly observable; we can not classify from which regime the observed process $X$ came from. However, it can be identified based on available information of the process. We denote by $\mathcal{F}_{t-}$ all previous information about $X$ prior to time $t\geq0$, and by $\mathcal{F}_{t,i}=\mathcal{F}_{t-}\cup\{X_t=i\}$, $i\in \mathbb{S}$. The set $\mathcal{F}_{t-}$ may contain full, partial information or maybe nothing about the past of $X$.

The likelihood of observing the past realization $\mathcal{F}_{t,j}$ of $X$ moving according to the process $X^{(k)}$ conditional on knowing its initial state $i$ is defined by
\begin{equation}\label{eq:likelihood}
\begin{split}
\mathbf{L}_{i,j}^{Q^{(k)}}(t):=\mathbb{P}\{\mathcal{F}_{t,j} \vert \phi=k, X_0=i\}= \prod_{l\in \mathbb{S}} \exp\big( -q_{l}^{(k)} T_l\big) \prod_{j\neq l, j\in \mathbb{S}} (q_{lj}^{(k)})^{N_{lj}},
\end{split}
\end{equation}
where in the both expressions we have denoted subsequently by $T_l$ and $N_{lj}$ the total time the observed process $X$ spent in state $l\in \mathbb{S}$ for $\mathcal{F}_{t,j}$, and the number of transitions from state $l$ to state $j$, with $j\neq l$, observed in the information set $\mathcal{F}_{t,j}$; whereas $q_{lj}^{(k)}$ represents the $(l,j)-$entry of the intensity matrix $\mathbf{Q}^{(k)}$.

\subsubsection{Bayesian updates of switching probability}

The Bayesian updates of switching probability $s_j(t)$ of $X$ (\ref{eq:mixture}) is defined by
\begin{equation}\label{eq:bayesianupdates}
s_j^{(k)}(t)=\mathbb{P}\{\phi=k\vert \mathcal{F}_{t,j}\}, \; \textrm{with} \; \sum_{k=1}^ms_j^{(k)}(t)=1, \; \textrm{for $j\in\mathbb{S}, \; t\geq0$}.
\end{equation}
It represents the proportion of those in state $j$ moving according to $X^{(k)}$. Note that $s_j^{(k)}(0)=s_j^{(k)}$ (\ref{eq:portion}). Denote by $\widetilde{\mathbf{S}}^{(k)}(t)$, $t\geq 0$, a diagonal matrix defined by
\begin{equation}\label{eq:St}
\widetilde{\mathbf{S}}^{(k)}(t) =
 \left(\begin{array}{cc}
 \mathbf{S}^{(k)}(t) & \mathbf{0} \\
  \mathbf{0} & s_{n+1}^{(k)}(t) \\
\end{array}\right), \;\; \textrm{s.t.} \;\; \sum_{k=1}^m \widetilde{\mathbf{S}}^{(k)}(t)=\mathbf{I},
\;\;  \textrm{for} \;\; t\geq 0,
\end{equation}
where we have denoted by $\mathbf{I}$ an $(n+1)\times (n+1)-$identity matrix, with $\mathbf{S}^{(k)}(t) =\mathrm{diag}(s_1^{(k)}(t), s_2^{(k)}(t),...,s_n^{(k)}(t))$, representing switching probability matrix of $X$.

For $t=0$, in which case $\mathcal{F}_{t,j}=\{X_0=j\}$, we write $\widetilde{\mathbf{S}}^{(k)}:= \widetilde{\mathbf{S}}^{(k)}(0)$, $\mathbf{S}^{(k)}:= \mathbf{S}^{(k)}(0)$. The element $s_j^{(k)}(t)$, $j\in \mathbb{S}$, of the intensity matrix $\widetilde{\mathbf{S}}^{(k)}(t)$ is given below.
\begin{prop}\label{prop:lem1}
Let $\widetilde{\boldsymbol{\pi}}$ be the initial probability of starting the Markov mixture process $X$ (\ref{eq:mixture}) on a finite state space $\mathbb{S}$. Define by $\mathbf{L}^{Q^{(k)}}(t)$ the likelihood matrix whose $(i,j)$ element $\mathbf{L}_{i,j}^{Q^{(k)}}(t)$ is defined in (\ref{eq:likelihood}). Then, for $j\in\mathbb{S}$ and $t\geq0$,
\begin{equation}\label{eq:likelihood2}
s_j^{(k)}(t)=\frac{\widetilde{\boldsymbol{\pi}}^{\top}\widetilde{\mathbf{S}}^{(k)} \mathbf{L}^{Q^{(k)}}(t)\mathbf{e}_j}{ \sum_{k=1}^m \widetilde{\boldsymbol{\pi}}^{\top}\widetilde{\mathbf{S}}^{(k)} \mathbf{L}^{Q^{(k)}}(t) \mathbf{e}_j}, \;\; k=1,...,m.
\end{equation}
To be more precise, depending on availability of information set $\mathcal{F}_{t-},$ we have:
\begin{enumerate}
\item[(i)] Under full information $\mathcal{F}_{t,j}=\{X_s, 0\leq s\leq t-\}\cup\{X_t=j\}$ that
\begin{equation*}
s_j^{(k)}(t)=\frac{s_{i_0}^{(k)}\mathbf{L}_{i_0,j}^{Q^{(k)}}(t)}{ \sum_{k=1}^m s_{i_0}^{(k)} \mathbf{L}_{i_0,j}^{Q^{(k)}}(t)}.
\end{equation*}
\item[(ii)] Under partial information $\mathcal{F}_{t,j}=\{X_t=j\}$, $s_j(t)$ is defined by
\begin{equation*}
s_j^{(k)}(t)=\frac{\widetilde{\boldsymbol{\pi}}^{\top} \widetilde{\mathbf{S}}^{(k)} \exp\big(\mathbf{Q}^{(k)}t\big)\mathbf{e}_j}{\sum_{k=1}^m \widetilde{\boldsymbol{\pi}}^{\top} \widetilde{\mathbf{S}}^{(k)} \exp\big(\mathbf{Q^{(k)}}t\big) \mathbf{e}_j}.
\end{equation*}
\item[(iii)] Under partial information $\mathcal{F}_{t,j}=\{X_0=i_0\}\cup\{X_t=j\}$, $s_j(t)$ is given by,
\begin{equation*}
s_j^{(k)}(t)=\frac{\mathbf{e}_{i_0}^{\top} \widetilde{\mathbf{S}}^{(k)} \exp\big(\mathbf{Q}^{(k)}t\big)\mathbf{e}_j}{ \sum_{k=1}^m \mathbf{e}_{i_0}^{\top} \widetilde{\mathbf{S}}^{(k)} \exp\big(\mathbf{Q}^{(k)}t\big)\mathbf{e}_j}.
\end{equation*}
\end{enumerate}
\end{prop}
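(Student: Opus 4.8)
The plan is to derive every formula in the statement from a single application of Bayes' theorem to the definition (\ref{eq:bayesianupdates}) of $s_j^{(k)}(t)=\mathbb{P}\{\phi=k\vert\mathcal{F}_{t,j}\}$, combined with the law of total probability over the (possibly unobserved) initial state and the conditional likelihood (\ref{eq:likelihood}). First I would write, by the definition of conditional probability,
\begin{equation*}
s_j^{(k)}(t)=\frac{\mathbb{P}\{\mathcal{F}_{t,j},\phi=k\}}{\sum_{l=1}^m \mathbb{P}\{\mathcal{F}_{t,j},\phi=l\}},
\end{equation*}
so that everything reduces to evaluating the joint probability $\mathbb{P}\{\mathcal{F}_{t,j},\phi=k\}$ in terms of the model primitives.

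To evaluate it, I would condition on the initial state and factor each term as
\begin{equation*}
\mathbb{P}\{\mathcal{F}_{t,j},\phi=k\}=\sum_{i_0\in\mathbb{S}}\mathbb{P}\{\mathcal{F}_{t,j}\vert\phi=k,X_0=i_0\}\,\mathbb{P}\{\phi=k\vert X_0=i_0\}\,\mathbb{P}\{X_0=i_0\}.
\end{equation*}
By definition $\mathbb{P}\{X_0=i_0\}=\widetilde{\pi}_{i_0}$, the mixing probability $\mathbb{P}\{\phi=k\vert X_0=i_0\}=s_{i_0}^{(k)}$ of (\ref{eq:portion}), and the conditional likelihood $\mathbb{P}\{\mathcal{F}_{t,j}\vert\phi=k,X_0=i_0\}=\mathbf{L}_{i_0,j}^{Q^{(k)}}(t)$ of (\ref{eq:likelihood}). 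Since $\widetilde{\mathbf{S}}^{(k)}$ in (\ref{eq:St}) is diagonal with $(i_0,i_0)$ entry $s_{i_0}^{(k)}$, the row vector $\widetilde{\boldsymbol{\pi}}^{\top}\widetilde{\mathbf{S}}^{(k)}$ has $i_0$th component $\widetilde{\pi}_{i_0}s_{i_0}^{(k)}$, and post-multiplication by $\mathbf{e}_j$ selects the $j$th column of $\mathbf{L}^{Q^{(k)}}(t)$; hence $\sum_{i_0}\widetilde{\pi}_{i_0}s_{i_0}^{(k)}\mathbf{L}_{i_0,j}^{Q^{(k)}}(t)=\widetilde{\boldsymbol{\pi}}^{\top}\widetilde{\mathbf{S}}^{(k)}\mathbf{L}^{Q^{(k)}}(t)\mathbf{e}_j$. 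Substituting this into the Bayes ratio yields the general identity (\ref{eq:likelihood2}).

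It then remains to specialize the conditional likelihood $\mathbf{L}_{i_0,j}^{Q^{(k)}}(t)=\mathbb{P}\{\mathcal{F}_{t,j}\vert\phi=k,X_0=i_0\}$ to each information set. For case (i), the full path $\mathcal{F}_{t,j}=\{X_s:0\le s\le t-\}\cup\{X_t=j\}$ reveals $X_0=i_0$, so the outer sum collapses to the single known term (equivalently $\widetilde{\boldsymbol{\pi}}=\mathbf{e}_{i_0}$), the likelihood is the explicit product form (\ref{eq:likelihood}) built from the exponential holding times and jump intensities of $X^{(k)}$, and the common factor $\widetilde{\pi}_{i_0}$ cancels between numerator and denominator, giving the stated ratio. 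For cases (ii) and (iii) only endpoint information is observed, so by the Markov property of each $X^{(k)}$ the relevant likelihood degenerates to the transition probability $\mathbb{P}\{X_t=j\vert\phi=k,X_0=i_0\}=\mathbf{e}_{i_0}^{\top}\exp(\mathbf{Q}^{(k)}t)\mathbf{e}_j$ from (\ref{eq:transprob}); this amounts to replacing $\mathbf{L}^{Q^{(k)}}(t)$ by $\exp(\mathbf{Q}^{(k)}t)$ in (\ref{eq:likelihood2}), retaining the prior $\widetilde{\boldsymbol{\pi}}$ when $i_0$ is unobserved (case (ii)) and setting $\widetilde{\boldsymbol{\pi}}=\mathbf{e}_{i_0}$ when it is known (case (iii)).

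I expect the main obstacle to lie not in the algebra but in correctly identifying the likelihood under each information structure. Specifically, one must justify the product-form path likelihood (\ref{eq:likelihood}) for the full-information case directly from the generator structure of the Markov jump process $X^{(k)}$, and then recognize that under partial endpoint information the sufficient likelihood collapses, via the Markov property, to the transition semigroup $\exp(\mathbf{Q}^{(k)}t)$ rather than the full-path density. Equally delicate is the bookkeeping where the initial state is observed, so that the law-of-total-probability sum over $i_0$ degenerates to a single term and the common initial mass cancels, which is exactly what reconciles the general formula (\ref{eq:likelihood2}) with the reduced expressions in cases (i) and (iii).
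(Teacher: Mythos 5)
Your proposal is correct and follows essentially the same route as the paper's own proof: a Bayes ratio $s_j^{(k)}(t)=\mathbb{P}\{\mathcal{F}_{t,j},\phi=k\}/\sum_{l=1}^m\mathbb{P}\{\mathcal{F}_{t,j},\phi=l\}$ combined with the law of total probability over the initial state, yielding $\mathbb{P}\{\mathcal{F}_{t,j},\phi=k\}=\widetilde{\boldsymbol{\pi}}^{\top}\widetilde{\mathbf{S}}^{(k)}\mathbf{L}^{Q^{(k)}}(t)\mathbf{e}_j$. Your additional care in specializing the likelihood to the three information sets (collapse of the sum and cancellation of $\widetilde{\pi}_{i_0}$ in cases (i) and (iii), degeneration of $\mathbf{L}^{Q^{(k)}}(t)$ to $\exp(\mathbf{Q}^{(k)}t)$ in cases (ii) and (iii)) is consistent with, and slightly more explicit than, what the paper leaves implicit.
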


The expression (\ref{eq:likelihood2}) generalizes the result of \cite{Frydman2008} and Lemma 3.1 in \cite{Surya2018}.

\pagebreak

Note that we have used slightly different notations for the likelihood function (\ref{eq:likelihood}) and the switching probability (\ref{eq:likelihood2}) from that of used in \cite{Frydman2008} and \cite{Surya2018}.

\medskip

\begin{proof}[Proposition \ref{prop:lem1}]
By the law of total probability and the Bayes' formula,
\begin{align*}
\mathbb{P}\{\mathcal{F}_{t,j},\phi=k\}=&\sum_{i\in\mathbb{S}}\mathbb{P}\{X_0=i\}\mathbb{P}\{\phi=k\vert X_0=i\}\mathbb{P}\{\mathcal{F}_{t,j}\vert \phi=k,X_0=i\}\\
=&\sum_{i\in\mathbb{S}}\pi_i\times s_i^{(k)} \times \mathbf{L}_{i,j}^{Q^{(k)}}(t)=\sum_{i\in\mathbb{S}} \pi_i \mathbf{e}_i^{\top}\widetilde{\mathbf{S}}^{(k)}\mathbf{L}^{Q^{(k)}}(t)\mathbf{e}_j\\
=&\widetilde{\boldsymbol{\pi}}^{\top} \widetilde{\mathbf{S}}^{(k)}\mathbf{L}^{Q^{(k)}}(t)\mathbf{e}_j.
\end{align*}
The claim in (\ref{eq:likelihood2}) is finally established on account of the Bayes' formula:
\begin{align*}
s_j^{(k)}(t)=\mathbb{P}\{\phi=k\vert\mathcal{F}_{t,j}\}=\frac{\mathbb{P}\{\mathcal{F}_{t,j},\phi=k\}}{\sum_{k=1}^m \mathbb{P}\{\mathcal{F}_{t,j},\phi=k\}}.  \exit
\end{align*}
\end{proof}

If $\{\mathbf{Q}^{(k)}\}$ have distinct eigenvalues $\{\lambda_j^{(k)}:j=1,\dots,n+1\}$, it can be proved similar to the Proposition 3.2 in \cite{Surya2018} using the Lagrange-Sylvester formula
\begin{equation}
\exp\big(\mathbf{Q}^{(k)}t\big)=\sum_{l=1}^{n+1}\exp\big(\lambda_l^{(k)}t\big)\prod_{j=1,j\neq l}^{n+1} \Big(\frac{\mathbf{Q}^{(k)}-\lambda_j^{(k)}\mathbf{I}}{\lambda_l^{(k)}-\lambda_j^{(k)}}\Big),
\end{equation}
see Theorem 2 of Apostol \cite{Apostol}, that, under partial information, the probability $s_j^{(k)}(t)\rightarrow 1$ in the long-run, as $t\rightarrow \infty$, implying that $X$ moves according to $X^{(k)}$. The result can be used to deduce the stationary distribution of (\ref{eq:MPHnew}) as $t\rightarrow \infty.$
\begin{prop}\label{prop:limitsjt}
Let $\{\mathbf{Q}^{(k)}\}$ have distinct eigenvalues $\{\lambda_j^{(k)}: j\in\mathbb{S}\},$ with $\lambda_{i_k}^{(k)}=\max\{\lambda_j^{(k)}, j\in\mathbb{S}\},$ $i_k=\textrm{argmax}_j\{\lambda_j^{(k)}\}$. Define $\overline{\lambda}=\max\{\lambda_{i_k}^{(k)}\}.$ For $j\in\mathbb{S}$,
\begin{equation}\label{eq:limitsjt}
  \lim_{t\rightarrow \infty} s_j^{(k)}(t)=
    \begin{cases}
     1, & \text{if  $\overline{\lambda}=\lambda_{i_k}^{(k)}$} \\
     \frac{\widetilde{\boldsymbol{\pi}}^{\top}\widetilde{\mathbf{S}}^{(k)}\mathcal{L}[\mathbf{Q}^{(k)}]\mathbf{e}_j}{\widetilde{\boldsymbol{\pi}}^{\top} \big( \widetilde{\mathbf{S}}^{(k)}\mathcal{L}[\mathbf{Q}^{(k)}] +  \widetilde{\mathbf{S}}^{(l)} \mathcal{L}[\mathbf{Q}^{(l)}] \big)\mathbf{e}_j},
        & \text{if  $\lambda_{i_k}^{(k)}=\lambda_{i_l}^{(l)}=\overline{\lambda}, l\neq k$} \\
       \frac{\widetilde{\boldsymbol{\pi}}^{\top}\widetilde{\mathbf{S}}^{(k)}\mathcal{L}[\mathbf{Q}^{(k)}]\mathbf{e}_j}{ \sum_{k=1}^m \widetilde{\boldsymbol{\pi}}^{\top} \widetilde{\mathbf{S}}^{(k)}\mathcal{L}[\mathbf{Q}^{(k)}]\mathbf{e}_j},
        & \text{if $\lambda_{i_k}^{(k)}=\lambda_{i_l}^{(l)}=\overline{\lambda},\forall l\neq k$,}
    \end{cases}
\end{equation}
where $\mathcal{L}[\mathbf{Q}^{(k)}]=\prod\limits_{j=1,j\neq i_k}^{n+1} \Big(\frac{\mathbf{Q}^{(k)}-\lambda_j^{(k)}\mathbf{I}}{\lambda_{i_k}^{(k)}-\lambda_j^{(k)}}\Big)$ is the Lagrange interpolation coefficient.
\end{prop}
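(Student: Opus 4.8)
The plan is to start from the partial-information representation of $s_j^{(k)}(t)$ in part (ii) of Proposition \ref{prop:lem1} and to extract the leading-order behaviour of each matrix exponential as $t\to\infty$ through the Lagrange--Sylvester expansion displayed above.

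First I would abbreviate the numerator of the $k$th switching probability by $a_k(t):=\widetilde{\boldsymbol{\pi}}^{\top}\widetilde{\mathbf{S}}^{(k)}\exp(\mathbf{Q}^{(k)}t)\mathbf{e}_j$, so that $s_j^{(k)}(t)=a_k(t)/\sum_{l=1}^m a_l(t)$. Substituting the Lagrange--Sylvester formula into each $a_l(t)$ expresses it as a finite sum of exponentials, $a_l(t)=\sum_{r=1}^{n+1} e^{\lambda_r^{(l)}t}\,c_{l,r}$, whose coefficients $c_{l,r}=\widetilde{\boldsymbol{\pi}}^{\top}\widetilde{\mathbf{S}}^{(l)}\big[\prod_{j'\neq r}(\mathbf{Q}^{(l)}-\lambda_{j'}^{(l)}\mathbf{I})/(\lambda_r^{(l)}-\lambda_{j'}^{(l)})\big]\mathbf{e}_j$ are scalars independent of $t$; in particular the coefficient attached to the maximal eigenvalue $\lambda_{i_l}^{(l)}$ is exactly $\widetilde{\boldsymbol{\pi}}^{\top}\widetilde{\mathbf{S}}^{(l)}\mathcal{L}[\mathbf{Q}^{(l)}]\mathbf{e}_j$.

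Next I would factor out the dominant growth rate $\overline{\lambda}$ from numerator and denominator alike: dividing through by $e^{\overline{\lambda}t}$ replaces every exponential by $e^{(\lambda_r^{(l)}-\overline{\lambda})t}$, whose exponent is non-positive. Letting $t\to\infty$, every term with $\lambda_r^{(l)}<\overline{\lambda}$ decays to zero, so only those regimes $l$ whose maximal eigenvalue $\lambda_{i_l}^{(l)}$ equals $\overline{\lambda}$ survive, each contributing its limiting coefficient $\widetilde{\boldsymbol{\pi}}^{\top}\widetilde{\mathbf{S}}^{(l)}\mathcal{L}[\mathbf{Q}^{(l)}]\mathbf{e}_j$. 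Reading off the resulting ratio in the three exhaustive configurations---$k$ the unique maximiser, exactly one further regime $l$ tied with $k$, and all regimes tied---then yields the three branches of \eqref{eq:limitsjt}, the first giving the limit $1$ because the surviving numerator and denominator coincide.

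The delicate point, and the step I expect to demand the most care, is to guarantee that the surviving leading coefficients do not vanish, so that the limiting ratio is well defined rather than an indeterminate $0/0$. Here the distinctness of the eigenvalues and the spectral-projection interpretation of $\mathcal{L}[\mathbf{Q}^{(l)}]$ enter: $\mathcal{L}[\mathbf{Q}^{(l)}]$ is the Riesz projection onto the $\lambda_{i_l}^{(l)}$-eigenspace, and one must verify that $\mathbf{e}_j$ retains a non-trivial component there after weighting by $\widetilde{\boldsymbol{\pi}}^{\top}\widetilde{\mathbf{S}}^{(l)}$. In the presence of an absorbing state this warrants particular attention for transient $j$, since the projection attached to the top eigenvalue may annihilate $\mathbf{e}_j$; in that event $\overline{\lambda}$ and $\mathcal{L}[\mathbf{Q}^{(l)}]$ should be read as the largest eigenvalue whose projection does not kill the $j$th coordinate, after which the identical factor-and-limit argument applies verbatim.
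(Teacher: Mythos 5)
Your argument is essentially the paper's own proof: the paper merely remarks that the result can be proved ``similar to the Proposition 3.2 in \cite{Surya2018} using the Lagrange--Sylvester formula,'' which is exactly your route --- expand each $\exp(\mathbf{Q}^{(l)}t)$ into eigen-exponentials with coefficients given by the Lagrange interpolation matrices, divide numerator and denominator of $s_j^{(k)}(t)=a_k(t)/\sum_l a_l(t)$ by $e^{\overline{\lambda}t}$, and retain the surviving leading coefficients $\widetilde{\boldsymbol{\pi}}^{\top}\widetilde{\mathbf{S}}^{(l)}\mathcal{L}[\mathbf{Q}^{(l)}]\mathbf{e}_j$ in the three tie configurations. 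Your closing caveat is well placed and in fact sharper than the paper's sketch: each $\mathbf{Q}^{(l)}$ is an intensity matrix, so its top eigenvalue is $0$ with spectral projection $\mathcal{L}[\mathbf{Q}^{(l)}]=\mathbb{1}\mathbf{e}_{\Delta}^{\top}$ in the absorbing case, whence $\mathcal{L}[\mathbf{Q}^{(l)}]\mathbf{e}_j=\mathbf{0}$ for every transient $j$ and the limit for such $j$ is genuinely governed by the next-largest eigenvalue (the top eigenvalue of $\mathbf{B}^{(l)}$, cf.\ Proposition \ref{prop:limitpijt}), precisely the reinterpretation you propose.
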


It is clear following the above that when the intensity matrices $\{\mathbf{Q}^{(k)}\}$ take the form of (\ref{eq:intensity}), (\ref{eq:limitsjt}) reduces to the results of Proposition 3.2 of \cite{Surya2018}.

In the section below we derive the Bayesian updates $\boldsymbol{\pi}(t)$ on the probability of starting $X$ at a given time $t\geq 0$ and available information of the process.

\subsubsection{Bayesian updates of probability distribution $\boldsymbol{\pi}$}
The following proposition and its corollary provide Bayesian updates $\widetilde{\pi}_j(t)$ on finding $X$ in any state $j\in\mathbb{S}$ at a given time $t\geq 0$ based on all previous observations $\mathcal{F}_{t-}$ of the process and knowing that it is still ''alive'' at time $t$.

\pagebreak

\begin{prop}\label{prop:Prop1}
Let $\mathcal{G}_t=\mathcal{F}_{t-}$. Define $\pi_j(t)=\mathbb{P}\{X_t=j\vert \mathcal{G}_{t}\}$ for $j\in \mathbb{S}, t\geq 0$.
\begin{align}\label{eq:piatt}
\widetilde{\pi}_j(t)=\frac{\sum_{k=1}^m \widetilde{\boldsymbol{\pi}}^{\top} \widetilde{\mathbf{S}}^{(k)} \mathbf{L}^{Q^{(k)}}(t)\mathbf{e}_j}{\sum_{k=1}^m \widetilde{\boldsymbol{\pi}}^{\top} \widetilde{\mathbf{S}}^{(k)} \mathbf{L}^{Q^{(k)}}(t)\mathbb{1}}.
\end{align}
\begin{enumerate}
\item[(i)] Given all previous observations $\mathcal{F}_{t-}=\{X_s, 0\leq s\leq t-\}$, we have
\begin{equation*}
\widetilde{\pi}_j(t)=\frac{\sum_{k=1}^m s_{i_0}^{(k)} \mathbf{L}_{i_0,j}^{Q^{(k)}}(t)}{\sum_{j\in\mathbb{S}}\sum_{k =1}^m s_{i_0}^{(k)} \mathbf{L}_{i_0,j}^{Q^{(k)}}(t)}.
\end{equation*}
\item[(ii)] If $\mathcal{F}_{t-}=\emptyset$, it follows from (\ref{eq:likelihood}) that $\mathbf{L}^{Q^{(k)}}(t) =\exp\big(\mathbf{Q}^{(k)}t\big)$. Then,
\begin{align*}
\widetilde{\pi}_j(t)=\sum_{k=1}^m \widetilde{\boldsymbol{\pi}}^{\top} \widetilde{\mathbf{S}}^{(k)}\exp\big(\mathbf{Q}^{(k)}t\big) \mathbf{e}_j.
\end{align*}
Furthermore, let $\{\mathbf{Q}^{(k)}\}$ have the representation (\ref{eq:intensity}). Then,
\begin{equation}\label{eq:piupdatet1a}
\widetilde{\pi}_j(t)=
\begin{cases}
\sum_{k=1}^m \boldsymbol{\pi}^{\top} \mathbf{S}^{(k)} \exp\big(\mathbf{B}^{(k)}t\big)\mathbf{e}_j, & \textrm{for $j\in E$}\\[8pt]
\sum_{k=1}^m \boldsymbol{\pi}^{\top} \mathbf{S}^{(k)}\big[\mathbf{I}-\exp\big(\mathbf{B}^{(k)}t\big)\big]\mathbb{1},  & \textrm{for $j=\Delta$}.
\end{cases}
\end{equation}

\item[(iii)]  If $\mathcal{F}_{t-}=\{X_0=i_0\}$, it follows from the above that $\pi_j(t)$ is given by
\begin{align*}
\widetilde{\pi}_j(t)=\sum_{k=1}^m \mathbf{e}_{i_0}^{\top} \widetilde{\mathbf{S}}^{(k)}\exp\big(\mathbf{Q}^{(k)}t\big) \mathbf{e}_j.
\end{align*}
Moreover, if $\{\mathbf{Q}^{(k)}\}$ have the representation (\ref{eq:intensity}), then we have
\begin{equation}\label{eq:piupdatet1b}
\widetilde{\pi}_j(t)=
\begin{cases}
\sum_{k=1}^m\mathbf{e}_{i_0}^{\top} \mathbf{S}^{(k)} \exp\big(\mathbf{B}^{(k)}t\big)\mathbf{e}_j, & \textrm{for $j\in E$}\\[8pt]
\sum_{k=1}^m\mathbf{e}_{i_0}^{\top}\mathbf{S}^{(k)}\big[\mathbf{I}-\exp\big(\mathbf{B}^{(k)}t\big)\big]\mathbb{1},  & \textrm{for $j=\Delta$}.
\end{cases}
\end{equation}
\end{enumerate}
Notice that $0<\widetilde{\pi}_E(t)<1$, $\widetilde{\pi}_{\Delta}(t)>0$, $\sum_{j\in \mathbb{S}}\widetilde{\pi}_j(t)=1$ for $t\geq 0$, and $\widetilde{\boldsymbol{\pi}}=\widetilde{\boldsymbol{\pi}}(0)$.
\end{prop}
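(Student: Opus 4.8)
The plan is to obtain the master identity (\ref{eq:piatt}) as a direct consequence of the joint-probability computation already carried out in the proof of Proposition \ref{prop:lem1}, and then to read off the three itemized cases by specializing the likelihood matrix $\mathbf{L}^{Q^{(k)}}(t)$ and the starting law to the content of $\mathcal{F}_{t-}$. The only probabilistic ingredients are Bayes' rule and the law of total probability over the unobservable regime variable $\phi$.

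First I would observe that, as events, $\{X_t=j\}\cap\mathcal{F}_{t-}=\mathcal{F}_{t,j}$, so that by conditioning on $\phi$ and invoking the identity $\mathbb{P}\{\mathcal{F}_{t,j},\phi=k\}=\widetilde{\boldsymbol{\pi}}^{\top}\widetilde{\mathbf{S}}^{(k)}\mathbf{L}^{Q^{(k)}}(t)\mathbf{e}_j$ established inside the proof of Proposition \ref{prop:lem1}, the numerator becomes
\[
\mathbb{P}\{X_t=j,\mathcal{F}_{t-}\}=\sum_{k=1}^m \widetilde{\boldsymbol{\pi}}^{\top}\widetilde{\mathbf{S}}^{(k)}\mathbf{L}^{Q^{(k)}}(t)\mathbf{e}_j .
\]
Summing over $j\in\mathbb{S}$ and using $\sum_{j\in\mathbb{S}}\mathbf{e}_j=\mathbb{1}$ produces the normalizing constant $\mathbb{P}\{\mathcal{F}_{t-}\}=\sum_{k=1}^m \widetilde{\boldsymbol{\pi}}^{\top}\widetilde{\mathbf{S}}^{(k)}\mathbf{L}^{Q^{(k)}}(t)\mathbb{1}$, and Bayes' rule $\widetilde{\pi}_j(t)=\mathbb{P}\{X_t=j,\mathcal{F}_{t-}\}/\mathbb{P}\{\mathcal{F}_{t-}\}$ then yields (\ref{eq:piatt}).

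Next I would specialize. In case (i) the observed path pins down the initial state $X_0=i_0$, so $\widetilde{\boldsymbol{\pi}}^{\top}\widetilde{\mathbf{S}}^{(k)}\mathbf{L}^{Q^{(k)}}(t)\mathbf{e}_j$ collapses to $s_{i_0}^{(k)}\mathbf{L}_{i_0,j}^{Q^{(k)}}(t)$, giving the stated ratio. In cases (ii) and (iii) the absence of an observed trajectory reduces the likelihood to the transition law $\mathbf{L}^{Q^{(k)}}(t)=\exp(\mathbf{Q}^{(k)}t)$; the key simplification here is that the normalizing denominator equals one, since $\exp(\mathbf{Q}^{(k)}t)\mathbb{1}=\mathbb{1}$ together with $\sum_{k=1}^m\widetilde{\mathbf{S}}^{(k)}=\mathbf{I}$ force $\sum_{k}\widetilde{\boldsymbol{\pi}}^{\top}\widetilde{\mathbf{S}}^{(k)}\exp(\mathbf{Q}^{(k)}t)\mathbb{1}=\widetilde{\boldsymbol{\pi}}^{\top}\mathbb{1}=1$, with $\widetilde{\boldsymbol{\pi}}$ replaced by $\mathbf{e}_{i_0}$ in case (iii). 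For the block formulas (\ref{eq:piupdatet1a}) and (\ref{eq:piupdatet1b}) I would insert the partitioned exponential
\[
\exp(\mathbf{Q}^{(k)}t)=\begin{pmatrix}\exp(\mathbf{B}^{(k)}t) & \mathbb{1}-\exp(\mathbf{B}^{(k)}t)\mathbb{1}\\ \mathbf{0} & 1\end{pmatrix},
\]
which follows from the phase-type block form (\ref{eq:intensity}) of $\mathbf{Q}^{(k)}$, and use $\pi_{\Delta}=0$ so that only the $E$-block of $\widetilde{\mathbf{S}}^{(k)}$ survives; reading off the $E$-columns gives the first line and the $\Delta$-column gives $\boldsymbol{\pi}^{\top}\mathbf{S}^{(k)}[\mathbf{I}-\exp(\mathbf{B}^{(k)}t)]\mathbb{1}$.

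The computation is essentially bookkeeping, and the step I expect to require the most care is the interpretation of $\mathbf{L}^{Q^{(k)}}(t)$ across the three information regimes: one must justify that under full information only $j=X_t$ carries positive likelihood, so that (\ref{eq:piatt}) is consistent although degenerate, and that under empty or initial-state information the path functionals $T_l,N_{lj}$ of (\ref{eq:likelihood}) are absent so that the likelihood genuinely reduces to the matrix exponential. Finally, the closing assertions follow quickly: $\sum_{j\in\mathbb{S}}\widetilde{\pi}_j(t)=1$ because summing the numerator over $j$ reproduces the denominator; $\widetilde{\pi}_{\Delta}(t)>0$ for $t>0$ since nonsingularity of each $\mathbf{B}^{(k)}$ guarantees a strictly positive absorption mass $\boldsymbol{\pi}^{\top}\mathbf{S}^{(k)}[\mathbf{I}-\exp(\mathbf{B}^{(k)}t)]\mathbb{1}$, forcing $0<\widetilde{\pi}_E(t)<1$; and evaluating at $t=0$, where $\mathbf{L}^{Q^{(k)}}(0)=\mathbf{I}$, recovers $\widetilde{\boldsymbol{\pi}}(0)=\widetilde{\boldsymbol{\pi}}$.
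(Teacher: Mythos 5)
Your proposal is correct and follows essentially the same route as the paper's own proof: both establish $\mathbb{P}\{\mathcal{F}_{t,j},\phi=k\}=\widetilde{\boldsymbol{\pi}}^{\top}\widetilde{\mathbf{S}}^{(k)}\mathbf{L}^{Q^{(k)}}(t)\mathbf{e}_j$ via Bayes' rule and the law of total probability (you reuse the identity from the proof of Proposition \ref{prop:lem1}, which the paper simply re-derives), normalize over $j\in\mathbb{S}$ to obtain (\ref{eq:piatt}), and then read off (i)--(iii) from $e^{\mathbf{Q}^{(k)}t}\mathbb{1}=\mathbb{1}$, $\sum_{k}\widetilde{\mathbf{S}}^{(k)}=\mathbf{I}$, $\pi_{\Delta}=0$ and the block partition (\ref{eq:blockpartisi}). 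The only cosmetic slip is in case (i), where the numerator collapses to $\pi_{i_0}s_{i_0}^{(k)}\mathbf{L}_{i_0,j}^{Q^{(k)}}(t)$ rather than $s_{i_0}^{(k)}\mathbf{L}_{i_0,j}^{Q^{(k)}}(t)$, but the factor $\pi_{i_0}$ cancels between numerator and denominator, so the stated ratio is unaffected.
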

\begin{proof}
The proof follows from applying the law of total probability and the Bayes' formula for conditional probability. By applying the latter, we have that
\begin{align*}
\mathbb{P}\{\mathcal{F}_{t,j},\phi=k,X_0=i\}=&\mathbb{P}\{X_0=i\}\mathbb{P}\{\phi=k\vert X_0=i\}\mathbb{P}\{\mathcal{F}_{t,j}\vert \phi=1,X_0=i\}\\
=&\pi_i\times s_i^{(k)}\times \mathbf{L}_{i,j}^{Q^{(k)}}(t) =\pi_i \mathbf{e}_i^{\top} \widetilde{\mathbf{S}}^{(k)}\mathbf{L}^{Q^{(k)}}(t) \mathbf{e}_j.
\end{align*}
Therefore, we have by the above and applying the law of total probability that
\begin{align*}
\mathbb{P}\{\mathcal{F}_{t,j}\}=&\sum_{i\in \mathbb{S}}\sum_{k=1}^m \mathbb{P}\{\mathcal{F}_{t,j}, \phi=k,X_0=i\}
= \sum_{k=1}^m \widetilde{\boldsymbol{\pi}}^{\top} \widetilde{\mathbf{S}}^{(k)}\mathbf{L}^{Q^{(k)}}(t)\mathbf{e}_j.
\end{align*}
The result (\ref{eq:piatt}) is established by the Bayes' rule and the law of total probability,
\begin{align*}
\widetilde{\pi}_j(t)=\mathbb{P}\{X_t=j\vert \mathcal{G}_{t}\}=\frac{\mathbb{P}\{\mathcal{F}_{t,j}\}}{\sum_{k\in \mathbb{S}}\mathbb{P}\{\mathcal{F}_{t,k}\}},
\end{align*}
while $(ii)$ and $(iii)$ follow on account of $e^{\mathbf{Q}^{(k)}t}\mathbb{1}=\mathbb{1}$, $\widetilde{\boldsymbol{\pi}}^{\top}(t)\mathbb{1}=1$ and (\ref{eq:blockpartisi}). \exit
\end{proof}

\pagebreak

\begin{cor}\label{cor:Cor1}
Suppose that the process is still alive at time $t\geq 0$. Then,
\begin{enumerate}
\item[(i)] Under full information $\mathcal{G}_{t}=\{X_s, 0\leq s\leq t-\}\cup \{X_t\neq \Delta\}$, we have
\begin{equation*}
\pi_j(t)=\frac{\sum_{k=1}^m s_{i_0}^{(k)} \mathbf{L}_{i_0,j}^{Q^{(k)}}(t)}{\sum_{j\in E}\sum_{k=1}^m s_{i_0}^{(k)} \mathbf{L}_{i_0,j}^{Q^{(k)}}(t)}, \quad \textrm{for $j\in E$}.
\end{equation*}
\item[(ii)] If $\mathcal{G}_{t}=\{X_t\neq \Delta\}$, it follows from (\ref{eq:likelihood}) and the matrix partition (\ref{eq:blockpartisi}),
\begin{equation}\label{eq:piupdatet2a}
\pi_j(t)=
\frac{ \sum_{k=1}^m \boldsymbol{\pi}^{\top}\mathbf{S}^{(k)} e^{\mathbf{B}^{(k)}t} \mathbf{e}_j}{ \sum_{k=1}^m \boldsymbol{\pi}^{\top} \mathbf{S}^{(k)} e^{\mathbf{B}^{(k)}t}  \mathbb{1}}, \quad \textrm{for $j\in E$}.
\end{equation}

\item[(iii)]  If $\mathcal{G}_{t}=\{X_0=i_0\}\cup\{X_t\neq \Delta\}$, following the above, $\pi_j(t)$ is given by
\begin{equation}\label{eq:piupdatet2b}
\pi_j(t)=\frac{\sum_{k=1}^m \mathbf{e}_{i_0}^{\top} \mathbf{S}^{(k)} e^{\mathbf{B}^{(k)}t} \mathbf{e}_j}{ \sum_{k=1}^m \mathbf{e}_{i_0}^{\top} \mathbf{S}^{(k)} e^{\mathbf{B}^{(k)}t}  \mathbb{1}}, \quad \textrm{for $j\in E$}.
\end{equation}
\end{enumerate}
\end{cor}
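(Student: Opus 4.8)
The plan is to derive Corollary \ref{cor:Cor1} from Proposition \ref{prop:Prop1} by a simple renormalization, exploiting that the extra conditioning event $\{X_t\neq\Delta\}$ merely discards the probability mass that has been absorbed at $\Delta$. First I would observe that for any transient state $j\in E$ we have $\{X_t=j\}\subseteq\{X_t\neq\Delta\}$, so that the definition of conditional probability yields
\[
\pi_j(t)=\mathbb{P}\{X_t=j\mid\mathcal{F}_{t-},X_t\neq\Delta\}
=\frac{\mathbb{P}\{X_t=j\mid\mathcal{F}_{t-}\}}{\mathbb{P}\{X_t\neq\Delta\mid\mathcal{F}_{t-}\}}
=\frac{\widetilde{\pi}_j(t)}{\sum_{l\in E}\widetilde{\pi}_l(t)},
\]
where $\widetilde{\pi}_j(t)$ is the quantity computed in Proposition \ref{prop:Prop1}. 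Substituting (\ref{eq:piatt}) for each $\widetilde{\pi}_j(t)$, the common normalizing denominator $\sum_k\widetilde{\boldsymbol{\pi}}^{\top}\widetilde{\mathbf{S}}^{(k)}\mathbf{L}^{Q^{(k)}}(t)\mathbb{1}$ cancels between numerator and denominator, leaving the master identity
\[
\pi_j(t)=\frac{\sum_{k=1}^m\widetilde{\boldsymbol{\pi}}^{\top}\widetilde{\mathbf{S}}^{(k)}\mathbf{L}^{Q^{(k)}}(t)\mathbf{e}_j}{\sum_{l\in E}\sum_{k=1}^m\widetilde{\boldsymbol{\pi}}^{\top}\widetilde{\mathbf{S}}^{(k)}\mathbf{L}^{Q^{(k)}}(t)\mathbf{e}_l},\qquad j\in E.
\]

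From this single identity each of the three cases follows by specializing the information set exactly as in Proposition \ref{prop:Prop1}. For (i), full information fixes $X_0=i_0$, so $\widetilde{\boldsymbol{\pi}}^{\top}\widetilde{\mathbf{S}}^{(k)}\mathbf{L}^{Q^{(k)}}(t)\mathbf{e}_j$ collapses to $s_{i_0}^{(k)}\mathbf{L}_{i_0,j}^{Q^{(k)}}(t)$, reproducing the stated ratio with the denominator summed over $j\in E$. For (ii) and (iii) I would use $\mathbf{L}^{Q^{(k)}}(t)=\exp(\mathbf{Q}^{(k)}t)$ together with the block partition (\ref{eq:blockpartisi}): since $\widetilde{\boldsymbol{\pi}}$ (respectively $\mathbf{e}_{i_0}$) places no mass on $\Delta$, the product $\widetilde{\boldsymbol{\pi}}^{\top}\widetilde{\mathbf{S}}^{(k)}\exp(\mathbf{Q}^{(k)}t)\mathbf{e}_j$ reduces, for $j\in E$, to the transient block $\boldsymbol{\pi}^{\top}\mathbf{S}^{(k)}e^{\mathbf{B}^{(k)}t}\mathbf{e}_j$, and summing over $j\in E$ replaces $\mathbf{e}_j$ by $\mathbb{1}$ to give $\boldsymbol{\pi}^{\top}\mathbf{S}^{(k)}e^{\mathbf{B}^{(k)}t}\mathbb{1}$; this is exactly (\ref{eq:piupdatet2a}), and the analogue with $\mathbf{e}_{i_0}$ in place of $\widetilde{\boldsymbol{\pi}}$ gives (\ref{eq:piupdatet2b}).

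The only step requiring genuine care, rather than bookkeeping, is the block reduction in (ii) and (iii), namely that the $E\times E$ block of $\exp(\mathbf{Q}^{(k)}t)$ is precisely $\exp(\mathbf{B}^{(k)}t)$. This is where the absorbing structure of $\mathbf{Q}^{(k)}$ enters: because the row of $\mathbf{Q}^{(k)}$ indexed by $\Delta$ vanishes, the matrix is block lower triangular, and the transient-to-transient entries of its exponential depend only on the phase generator $\mathbf{B}^{(k)}$. Given this, together with the standing hypothesis $\pi_\Delta=0$ (so that $\widetilde{\boldsymbol{\pi}}^{\top}\widetilde{\mathbf{S}}^{(k)}=(\boldsymbol{\pi}^{\top}\mathbf{S}^{(k)},0)$), every term involving the absorbing coordinate drops out and the reduction is immediate. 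I expect no further difficulty, the statement being essentially the normalization of Proposition \ref{prop:Prop1} onto the set $E$ of states in which the process is still alive.
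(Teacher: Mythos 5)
Your proposal is correct and matches the paper's own (implicit) argument: the paper derives Corollary~\ref{cor:Cor1} exactly as the normalization of the quantities $\widetilde{\pi}_j(t)$ from Proposition~\ref{prop:Prop1} over the transient states $E$ (so that $\pi_{\Delta}(t)=0$), with the same specialization of $\mathbf{L}^{Q^{(k)}}(t)$ and the same block reduction via (\ref{eq:blockpartisi}) in cases (ii) and (iii). The only blemish is cosmetic: $\mathbf{Q}^{(k)}$ in (\ref{eq:intensity}) is block \emph{upper} triangular, not lower, though this does not affect your conclusion that the $E\times E$ block of $e^{\mathbf{Q}^{(k)}t}$ is $e^{\mathbf{B}^{(k)}t}$.
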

It follows that $0<\pi_E(t)<1$, $\pi_{\Delta}(t)=0$, $\sum_{j\in E}\pi_j(t)=1$ for $t\geq 0$, and $\widetilde{\boldsymbol{\pi}}=\widetilde{\boldsymbol{\pi}}(0)$.

\medskip

Notice that the Bayesian update $\pi_j(t)$ (\ref{eq:piupdatet2a}) and (\ref{eq:piupdatet2b}) form the normalization of the probability $\pi_j(t)$ (\ref{eq:piupdatet1a}) and (\ref{eq:piupdatet1b}), respectively, as such that $\pi_{\Delta}(t)=0$. The results of Proposition \ref{prop:Prop1} and Corollary \ref{cor:Cor1} give additional features to the distributional properties of the mixture process \cite{Surya2018} and \cite{Frydman2008}.

Below we give the value of $\pi_j(t)$ as $t\rightarrow \infty$ under partial information. The result can be used to deduce the stationary distribution of (\ref{eq:MPHnew}) as $t\rightarrow \infty$.
\begin{prop}\label{prop:limitpijt}
Let $\{\mathbf{B}^{(k)}\}$ have distinct eigenvalues $\{\lambda_j^{(k)}: j\in E\},$ with $\lambda_{i_k}^{(k)}=\max\{\lambda_j^{(k)}, j\in E\},$ $i_k=\textrm{argmax}_j\{\lambda_j^{(k)}\}$. Define $\overline{\lambda}=\max\{\lambda_{i_k}^{(k)}\}.$ For $j\in E$,
\begin{equation}\label{eq:limitpijt}
  \lim_{t\rightarrow \infty} \pi_j(t)=
    \begin{cases}
     \frac{\boldsymbol{\pi}^{\top}\mathbf{S}^{(k)}\mathcal{L}[\mathbf{B}^{(k)}]\mathbf{e}_j }{ \boldsymbol{\pi}^{\top}\mathbf{S}^{(k)}\mathcal{L}[\mathbf{B}^{(k)}]\mathbb{1}  }, & \text{if  $\lambda_{i_k}^{(k)}=\overline{\lambda}$} \\[8pt]
      \frac{\boldsymbol{\pi}^{\top}\big(\mathbf{S}^{(k)}\mathcal{L}[\mathbf{B}^{(k)}]+ \mathbf{S}^{(l)}\mathcal{L}[\mathbf{B}^{(l)}]\big)\mathbf{e}_j}{\boldsymbol{\pi}^{\top}\big(\mathbf{S}^{(k)}\mathcal{L}[\mathbf{B}^{(k)}]+\mathbf{S}^{(l)}\mathcal{L}[\mathbf{B}^{(l)}]\big)\mathbb{1}},
        & \text{if  $\lambda_{i_k}^{(k)}=\lambda_{i_l}^{(l)}=\overline{\lambda}, l\neq k$}\\[8pt]
              \frac{\sum_{k=1}^m \boldsymbol{\pi}^{\top} \mathbf{S}^{(k)}\mathcal{L}[\mathbf{B}^{(k)}]\mathbf{e}_j}{\sum_{k=1}^m \boldsymbol{\pi}^{\top} \mathbf{S}^{(k)}\mathcal{L}[\mathbf{B}^{(k)}]\mathbb{1}},
        & \text{if  $\lambda_{i_k}^{(k)}=\lambda_{i_l}^{(l)}=\overline{\lambda}, \forall l\neq k$},
    \end{cases}
\end{equation}
where $\mathcal{L}[\mathbf{B}^{(k)}]=\prod\limits_{j=1,j\neq i_k}^{n} \Big(\frac{\mathbf{B}^{(k)}-\lambda_j^{(k)}\mathbf{I}}{\lambda_{i_k}^{(k)}-\lambda_j^{(k)}}\Big)$ is the Lagrange interpolation coefficient.
\end{prop}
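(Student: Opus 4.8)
The plan is to start from the explicit ratio supplied by Corollary \ref{cor:Cor1}(ii), equation (\ref{eq:piupdatet2a}), namely $\pi_j(t)=\big(\sum_{k=1}^m \boldsymbol{\pi}^{\top}\mathbf{S}^{(k)} e^{\mathbf{B}^{(k)}t}\mathbf{e}_j\big)\big/\big(\sum_{k=1}^m \boldsymbol{\pi}^{\top}\mathbf{S}^{(k)} e^{\mathbf{B}^{(k)}t}\mathbb{1}\big)$ for $j\in E$, and to analyze numerator and denominator through the spectral (Lagrange--Sylvester) representation of each matrix exponential. Since the $\{\mathbf{B}^{(k)}\}$ are assumed to have distinct eigenvalues $\{\lambda_j^{(k)}:j\in E\}$, I would insert
\[
e^{\mathbf{B}^{(k)}t}=\sum_{l=1}^{n} e^{\lambda_l^{(k)}t}\prod_{j=1,\,j\neq l}^{n}\frac{\mathbf{B}^{(k)}-\lambda_j^{(k)}\mathbf{I}}{\lambda_l^{(k)}-\lambda_j^{(k)}},
\]
exactly the formula attributed to Apostol \cite{Apostol} and already used for Proposition \ref{prop:limitsjt}.

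The central step is an asymptotic comparison: factor the overall dominant (least negative) exponential $e^{\overline{\lambda}t}$, with $\overline{\lambda}=\max_k\lambda_{i_k}^{(k)}$, out of both numerator and denominator. Because every eigenvalue obeys $\lambda_l^{(k)}\le\lambda_{i_k}^{(k)}\le\overline{\lambda}$, each surviving factor $e^{(\lambda_l^{(k)}-\overline{\lambda})t}$ tends to $0$ when $\lambda_l^{(k)}<\overline{\lambda}$ and equals $1$ when $\lambda_l^{(k)}=\overline{\lambda}$; distinctness within each $\mathbf{B}^{(k)}$ forces the only possibly surviving index to be $l=i_k$, and only for regimes $k$ whose dominant eigenvalue attains $\overline{\lambda}$. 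Letting $t\to\infty$ thus annihilates all subdominant contributions and leaves, in both numerator and denominator, precisely the sum of the terms $\boldsymbol{\pi}^{\top}\mathbf{S}^{(k)}\mathcal{L}[\mathbf{B}^{(k)}](\cdot)$ over the maximizing regimes, where $\mathcal{L}[\mathbf{B}^{(k)}]$ is the displayed Lagrange coefficient, i.e. the spectral projection onto the $\lambda_{i_k}^{(k)}$-eigenspace. The three cases of (\ref{eq:limitpijt}) are then read off according to whether one, two, or all regimes attain $\overline{\lambda}$. It is worth stressing that, unlike Proposition \ref{prop:limitsjt} (where the normalization runs over the regime index $k$ and the single-maximizer case collapses to $1$), here the normalization runs over the state index $j\in E$, so even when a single regime dominates the limit is the nontrivial quasi-stationary profile $\boldsymbol{\pi}^{\top}\mathbf{S}^{(k)}\mathcal{L}[\mathbf{B}^{(k)}]\mathbf{e}_j\big/\boldsymbol{\pi}^{\top}\mathbf{S}^{(k)}\mathcal{L}[\mathbf{B}^{(k)}]\mathbb{1}$ of that regime rather than a degenerate point mass.

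The main obstacle will be verifying that the limit is well defined, that is, that the surviving denominator $\sum_{k:\,\lambda_{i_k}^{(k)}=\overline{\lambda}}\boldsymbol{\pi}^{\top}\mathbf{S}^{(k)}\mathcal{L}[\mathbf{B}^{(k)}]\mathbb{1}$ is strictly positive and not zero. For this I would invoke Perron--Frobenius theory: each $\mathbf{B}^{(k)}$ is a nonsingular sub-intensity matrix, so for large $c$ the shifted matrix $\mathbf{B}^{(k)}+c\mathbf{I}$ is nonnegative, whence its dominant eigenvalue $\lambda_{i_k}^{(k)}+c$ is real and (being simple by the distinctness hypothesis) carries nonnegative left and right eigenvectors; since the shift leaves eigenvectors and spectral projections unchanged, $\mathcal{L}[\mathbf{B}^{(k)}]$ is the associated sign-definite rank-one projection, and the forms $\boldsymbol{\pi}^{\top}\mathbf{S}^{(k)}\mathcal{L}[\mathbf{B}^{(k)}]\mathbb{1}$ are strictly positive whenever $\boldsymbol{\pi}^{\top}\mathbf{S}^{(k)}$ charges the relevant transient states, which is exactly the positivity recorded after Corollary \ref{cor:Cor1}. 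The argument is then the verbatim analogue of the proof of Proposition \ref{prop:limitsjt} with $\mathbf{Q}^{(k)}$ replaced by its phase generator $\mathbf{B}^{(k)}$ and $\mathbb{S}$ by $E$, the only genuinely new ingredient being the state-indexed normalization that yields the ratio form in each branch of (\ref{eq:limitpijt}).
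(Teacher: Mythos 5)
Your proposal is correct and follows precisely the route the paper intends: the paper gives no separate proof of Proposition \ref{prop:limitpijt}, instead pointing (in the discussion preceding Proposition \ref{prop:limitsjt}) to the Lagrange--Sylvester expansion of the matrix exponential from Apostol, and your argument --- substituting that expansion into the ratio (\ref{eq:piupdatet2a}), factoring out $e^{\overline{\lambda}t}$, and keeping only the spectral projections $\mathcal{L}[\mathbf{B}^{(k)}]$ of the regimes attaining $\overline{\lambda}$ --- is exactly this argument with $\mathbf{Q}^{(k)}$ replaced by $\mathbf{B}^{(k)}$, including the correct observation that the state-indexed normalization keeps the single-regime case nondegenerate. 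Your Perron--Frobenius justification that the surviving denominator is strictly positive (and, implicitly, that $\overline{\lambda}$ is real) is a careful addition that the paper leaves unstated.
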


In contrary to (\ref{eq:piupdatet1a}) and (\ref{eq:piupdatet1b}), we see from the above proposition that given the process still alive in the long run, the stationary distribution $\pi_j(\infty):=\lim_{t\rightarrow \infty} \pi_j(t)$ of $X$ does not have zero mass on the state $E$ with $\sum_{j\in E} \pi_j(\infty)=1.$

\subsubsection{$\mathcal{F}_t-$conditional transition probability matrix}
The main feature of the process $X$ (\ref{eq:mixture}) is that unlike its component $X^{(0)}$ and $X^{(1)}$, $X$ does not have the Markov property; future development of its state depends on its past information. The following theorem summarizes this property.

\pagebreak

\begin{theo}\label{theo:theo1}
For any $s\geq t\geq 0$, the conditional transition probability matrix $[\mathbf{P}(t,s)]_{i,j}:=\mathbb{P}\{X_s=j\vert\mathcal{F}_{t,i}\}$, $i,j\in\mathbb{S}$, of the mixture process $X$ (\ref{eq:mixture}) is given by
\begin{equation}\label{eq:transM}
\mathbf{P}(t,s)= \sum_{k=1}^m \widetilde{\mathbf{S}}^{(k)}(t)e^{\mathbf{Q}^{(k)}(s-t)} \;\; \mathrm{with} \;\; \sum_{k=1}^m \widetilde{\mathbf{S}}^{(k)}(t) =\mathbf{I}.
\end{equation}
\end{theo}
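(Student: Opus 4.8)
The plan is to condition on the latent speed regime $\phi$, exploit the conditional Markov structure of each component, and then average over the Bayesian posterior of $\phi$ given $\mathcal{F}_{t,i}$. The starting point is the elementary decomposition by the law of total probability,
$$[\mathbf{P}(t,s)]_{i,j} = \mathbb{P}\{X_s = j \mid \mathcal{F}_{t,i}\} = \sum_{k=1}^m \mathbb{P}\{X_s = j \mid \phi = k, \mathcal{F}_{t,i}\}\, \mathbb{P}\{\phi = k \mid \mathcal{F}_{t,i}\},$$
which holds for each fixed pair $i,j \in \mathbb{S}$ and reduces the problem to evaluating the two factors separately.

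First I would treat the conditional law $\mathbb{P}\{X_s = j \mid \phi = k, \mathcal{F}_{t,i}\}$. The key observation is that $\phi$ is selected only once and remains fixed over the whole time horizon, so that on the event $\{\phi = k\}$ the mixture coincides with the genuine Markov jump process $X^{(k)}$. Conditioning additionally on the full past $\mathcal{F}_{t-}$ together with $\{X_t = i\}$, the Markov property of $X^{(k)}$ lets me discard $\mathcal{F}_{t-}$, and time-homogeneity together with (\ref{eq:transprob}) yields
$$\mathbb{P}\{X_s = j \mid \phi = k, \mathcal{F}_{t,i}\} = \mathbb{P}\{X_s^{(k)} = j \mid X_t^{(k)} = i\} = \big[e^{\mathbf{Q}^{(k)}(s-t)}\big]_{i,j}.$$
Next I would identify the posterior factor directly with the Bayesian update of the switching probability: by its very definition (\ref{eq:bayesianupdates}), $\mathbb{P}\{\phi = k \mid \mathcal{F}_{t,i}\} = s_i^{(k)}(t)$.

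Substituting both factors into the decomposition gives the scalar identity $[\mathbf{P}(t,s)]_{i,j} = \sum_{k=1}^m s_i^{(k)}(t)\, \big[e^{\mathbf{Q}^{(k)}(s-t)}\big]_{i,j}$, and it remains only to recast this in matrix form. Since $\widetilde{\mathbf{S}}^{(k)}(t)$ is diagonal with $i$th diagonal entry $s_i^{(k)}(t)$, left-multiplication acts by $\big[\widetilde{\mathbf{S}}^{(k)}(t)\, e^{\mathbf{Q}^{(k)}(s-t)}\big]_{i,j} = s_i^{(k)}(t)\,\big[e^{\mathbf{Q}^{(k)}(s-t)}\big]_{i,j}$, so summing over $k$ produces exactly (\ref{eq:transM}); the normalization $\sum_{k=1}^m \widetilde{\mathbf{S}}^{(k)}(t) = \mathbf{I}$ is merely the constraint $\sum_{k} s_i^{(k)}(t) = 1$ already recorded in (\ref{eq:St}).

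The main obstacle, and the only place demanding genuine care, is the first step: justifying that conditioning on $\{\phi = k\}$ collapses the mixture onto the Markov process $X^{(k)}$ so that the past $\mathcal{F}_{t-}$ may be dropped while retaining $\{X_t=i\}$. This rests on $\phi$ being a single, time-invariant mixing variable rather than a quantity that itself evolves in time, and on the right-continuity and time-homogeneity of the components $\{X^{(k)}\}$ assumed at the outset. Once this conditional Markov reduction is secured, the remaining steps are purely bookkeeping.
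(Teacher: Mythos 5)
Your proposal is correct and follows essentially the same route as the paper's own proof: a law-of-total-probability decomposition over the latent regime $\phi$, the identification $\mathbb{P}\{\phi=k\mid\mathcal{F}_{t,i}\}=s_i^{(k)}(t)$ from (\ref{eq:bayesianupdates}), the Markov property of each component $X^{(k)}$ to reduce $\mathbb{P}\{X_s=j\mid\phi=k,\mathcal{F}_{t,i}\}$ to $\big[e^{\mathbf{Q}^{(k)}(s-t)}\big]_{i,j}$, and the final repackaging via the diagonal matrices $\widetilde{\mathbf{S}}^{(k)}(t)$. Your explicit remark that the conditional Markov reduction is the one step demanding care (resting on $\phi$ being time-invariant) is a point the paper passes over with only the phrase ``we used the fact that $X^{(k)}$ is Markovian,'' so your write-up is, if anything, slightly more careful on the same argument.
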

Theorem \ref{theo:theo1} generalizes the result of a lemma in \cite{Frydman2008} and Theorem 3.4 in \cite{Surya2018}.

\medskip

\begin{proof}
Similar to the proof of Theorem 3.4 in \cite{Surya2018}, (\ref{eq:transM}) is established by applying the law of total probability and Bayes' rule for conditional probability:
\begin{align*}
\mathbf{P}_{i,j}(t,s)=&\mathbb{P}\{X_s=j \vert X_t=i,\mathcal{F}_{t-}\}=\sum_{k=1}^m \mathbb{P}\big\{X_s=j, \phi=k \vert X_t=i, \mathcal{F}_{t-}\big\}\\
=& \sum_{k=1}^m \mathbb{P}\{\phi=k\vert X_t=i,\mathcal{F}_{t-}\} \mathbb{P}\{X_s=j \vert \phi=k, X_t=i,\mathcal{F}_{t-}\}\\
=&\sum_{k=1}^m s_i^{(k)}(t) \mathbf{P}_{i,j}^{Q^{(k)}}(t,s) =\sum_{k=1}^m \mathbf{e}_i^{\top} \widetilde{\mathbf{S}}^{(k)}(t)  \mathbf{P}^{Q^{(k)}}(t,s)  \mathbf{e}_j,
\end{align*}
where on the second last equality we used the fact that $X^{(k)}$ is Markovian. \exit
\end{proof}

\medskip

It is clear from (\ref{eq:transM}) that, unless the underlying Markov process $X^{(k)}$ moves at the same speed $\mathbf{Q}$, i.e., $\mathbf{Q}^{(k)}=\mathbf{Q}$ for $k=1,\dots,m$, $X$ does not inherit the Markov property of $X^{(k)}$, i.e., future development of $X$ is determined by its past information $\mathcal{F}_{t,i}$ through its likelihood functions (\ref{eq:likelihood}). To be more precise, when $\mathbf{Q}^{(k)}=\mathbf{Q}$, it follows from the transition probability matrix (\ref{eq:transM}) that $\mathbf{P}(t,s)=e^{\mathbf{Q}(s-t)},$ by which $X$ reduces to a simple Markov jump process.

\section{Probability distributions of first exit times}\label{sec:mainsection}

This section presents the main results of this paper on the joint probability distributions of the first exit times $\{\tau_k\}$ (\ref{eq:MultiPH}) of the Markov mixture process $X$ (\ref{eq:mixture}), conditional on the available information sets $\mathcal{F}_{t,i}$ and $\mathcal{G}_{t}$. We first derive conditional univariate distribution (\ref{eq:DefTime}). To motivate the main results on the conditional multivariate distributions (\ref{eq:MPHnew}), we consider the bivariate case in some details. Throughout the remaining, we define intensity matrix $\mathbf{Q}^{(k)}$ by
\begin{equation}\label{eq:intensity}
\mathbf{Q}^{(k)} = \left(\begin{array}{cc}
  \mathbf{B}^{(k)} & -\mathbf{B}^{(k)}\mathbb{1} \\
  \mathbf{0} & 0 \\
\end{array}\right).
\end{equation}

The following results on block partition of the transition probability matrix $\mathbf{P}(t,s)$ (\ref{eq:transM}) and exponential matrix $e^{\mathbf{Q}^{(k)}t}$ will be used to derive the conditional probability distributions (\ref{eq:MPHnew}). We refer to Proposition 3.7 in \cite{Surya2018} for details.
\begin{lem}
Let the phase generator matrix $\mathbf{B}^{(k)}$ be nonsingular. Then,
\begin{align}\label{eq:blockpartisi}
e^{\mathbf{Q}^{(k)}t}=
\left(\begin{array}{cc}
e^{\mathbf{B}^{(k)}t} & \mathbb{1}-e^{\mathbf{B}^{(k)}t}\mathbb{1}\\
  \mathbf{0} & 1 \\
\end{array}\right).
\end{align}
\end{lem}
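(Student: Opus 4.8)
The plan is to compute $e^{\mathbf{Q}^{(k)}t}$ straight from the power-series definition $e^{\mathbf{Q}^{(k)}t}=\sum_{r\geq 0}\frac{t^r}{r!}(\mathbf{Q}^{(k)})^r$, exploiting the block upper-triangular form of $\mathbf{Q}^{(k)}$ in (\ref{eq:intensity}). Writing $\mathbf{B}=\mathbf{B}^{(k)}$ for brevity, the first step is to identify the powers of $\mathbf{Q}^{(k)}$. Multiplying out the block product once shows $(\mathbf{Q}^{(k)})^2=\left(\begin{smallmatrix}\mathbf{B}^2 & -\mathbf{B}^2\mathbb{1}\\ \mathbf{0} & 0\end{smallmatrix}\right)$, since the $(1,1)$ block gives $\mathbf{B}\cdot\mathbf{B}$, the $(1,2)$ block gives $\mathbf{B}(-\mathbf{B}\mathbb{1})=-\mathbf{B}^2\mathbb{1}$, and the bottom row stays zero. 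A one-line induction then yields $(\mathbf{Q}^{(k)})^r=\left(\begin{smallmatrix}\mathbf{B}^r & -\mathbf{B}^r\mathbb{1}\\ \mathbf{0} & 0\end{smallmatrix}\right)$ for all $r\geq 1$, with $(\mathbf{Q}^{(k)})^0=\mathbf{I}$ handled separately.

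The second step is to substitute this into the series and sum block by block. The $(1,1)$ block collects the $n\times n$ identity together with $\sum_{r\geq 1}\frac{t^r}{r!}\mathbf{B}^r$, giving $e^{\mathbf{B}t}$; the bottom-left block is $\mathbf{0}$ and the bottom-right is $1$, as only the $r=0$ term contributes there. The only mildly delicate piece is the $(1,2)$ block, where factoring $\mathbb{1}$ to the right gives $\sum_{r\geq 1}\frac{t^r}{r!}(-\mathbf{B}^r)\mathbb{1}=-\big(\sum_{r\geq 1}\frac{t^r}{r!}\mathbf{B}^r\big)\mathbb{1}=-(e^{\mathbf{B}t}-\mathbf{I})\mathbb{1}=\mathbb{1}-e^{\mathbf{B}t}\mathbb{1}$, which is exactly the claimed off-diagonal entry. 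Assembling the four blocks produces the asserted matrix.

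An equivalent route, worth mentioning as a cross-check, is to verify that the right-hand side $M(t)$ solves the defining initial-value problem $M'(t)=\mathbf{Q}^{(k)}M(t)$ with $M(0)=\mathbf{I}$; differentiating the proposed blocks and comparing with the block product $\mathbf{Q}^{(k)}M(t)$ reproduces the same relations. I do not anticipate any genuine obstacle: the argument is routine once the power pattern is established. I would, however, note explicitly that the nonsingularity hypothesis on $\mathbf{B}^{(k)}$ is not required for the identity itself---the series manipulation is valid for arbitrary $\mathbf{B}^{(k)}$---and is stated because it is the condition (certain absorption, $e^{\mathbf{B}^{(k)}t}\mathbb{1}\to 0$ as $t\to\infty$) under which this block form is later applied.
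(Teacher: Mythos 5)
Your proof is correct: the inductive identification $(\mathbf{Q}^{(k)})^r=\bigl(\begin{smallmatrix}\mathbf{B}^r & -\mathbf{B}^r\mathbb{1}\\ \mathbf{0} & 0\end{smallmatrix}\bigr)$ for $r\geq 1$ followed by blockwise summation of the exponential series is the standard argument, and it matches the paper, which gives no internal proof but defers to Proposition 3.7 of \cite{Surya2018}, where the same computation underlies the result. Your side remark is also accurate: the nonsingularity of $\mathbf{B}^{(k)}$ plays no role in the identity itself and is assumed only because of how the block form is used later (certain absorption and identities such as $\int_0^\infty e^{\mathbf{B}^{(k)}s}\,ds=-\big[\mathbf{B}^{(k)}\big]^{-1}$).
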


\begin{prop}
The transition probability matrix (\ref{eq:transM}) has block partition:
\begin{equation}\label{eq:blockPts}
\mathbf{P}(t,s)= \left(\begin{array}{cc}
 \sum_{k=1}^m \mathbf{S}^{(k)}(t)e^{\mathbf{B}^{(k)}(s-t)} &  \sum_{k=1}^m \mathbf{S}^{(k)}(t)\big(\mathbf{I}-e^{\mathbf{B}^{(k)}(s-t)}\big)\mathbb{1}\\
  \mathbf{0} & 1 \\
\end{array}\right).
\end{equation}
\end{prop}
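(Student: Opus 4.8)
The plan is to prove (\ref{eq:blockPts}) by a purely algebraic block computation, feeding the block form of the matrix exponential from the preceding Lemma into the expression for $\mathbf{P}(t,s)$ supplied by Theorem \ref{theo:theo1}. Since both ingredients are already established, no new probabilistic argument is needed; the work is entirely matrix bookkeeping.

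First I would start from the identity (\ref{eq:transM}), namely $\mathbf{P}(t,s)=\sum_{k=1}^m \widetilde{\mathbf{S}}^{(k)}(t)\,e^{\mathbf{Q}^{(k)}(s-t)}$, and substitute two block decompositions: the block-diagonal form of $\widetilde{\mathbf{S}}^{(k)}(t)$ from (\ref{eq:St}), and the block form (\ref{eq:blockpartisi}) of the exponential with $t$ replaced by $s-t$. Carrying out the $(2\times 2)$ block product for a fixed $k$, the top-left block is $\mathbf{S}^{(k)}(t)e^{\mathbf{B}^{(k)}(s-t)}$, the top-right block is $\mathbf{S}^{(k)}(t)\big(\mathbb{1}-e^{\mathbf{B}^{(k)}(s-t)}\mathbb{1}\big)=\mathbf{S}^{(k)}(t)\big(\mathbf{I}-e^{\mathbf{B}^{(k)}(s-t)}\big)\mathbb{1}$, the bottom-left block is $\mathbf{0}$, and the bottom-right block is the scalar $s_{n+1}^{(k)}(t)$.

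Finally I would sum over $k$. The upper two blocks and the lower-left block then reproduce the three corresponding entries of (\ref{eq:blockPts}) verbatim. For the lower-right entry I would invoke the normalization $\sum_{k=1}^m \widetilde{\mathbf{S}}^{(k)}(t)=\mathbf{I}$ recorded in (\ref{eq:St}); reading off its $(n+1,n+1)$ component gives $\sum_{k=1}^m s_{n+1}^{(k)}(t)=1$, so the corner entry equals $1$. The only step that carries any content beyond routine multiplication is this last one, where one must remember to use the constraint that the switching probabilities sum to one rather than treating each $s_{n+1}^{(k)}(t)$ individually; this is the sole place the probabilistic structure enters, and I expect no genuine obstacle elsewhere.
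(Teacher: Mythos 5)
Your proof is correct and is exactly the argument the paper intends: the proposition is stated without proof (the paper defers to Proposition 3.7 of \cite{Surya2018}), and the implicit derivation is precisely your block multiplication of $\widetilde{\mathbf{S}}^{(k)}(t)$ from (\ref{eq:St}) against the partition (\ref{eq:blockpartisi}) of $e^{\mathbf{Q}^{(k)}(s-t)}$ inside (\ref{eq:transM}). Your handling of the lower-right entry via the normalization $\sum_{k=1}^m \widetilde{\mathbf{S}}^{(k)}(t)=\mathbf{I}$, giving $\sum_{k=1}^m s_{n+1}^{(k)}(t)=1$, is also the right (and only nontrivial) step.
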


\subsection{Conditional univariate distributions}

This section presents explicit identity for the probability distribution $\overline{F}_t(s)=\mathbb{P}\{\tau > s \vert \mathcal{G}_{t}\}$, $s\geq t\geq 0$, of the first exit time $\tau$ (\ref{eq:DefTime}) given the information $\mathcal{G}_{t}$.
\begin{lem}
The $\mathcal{G}_{t}-$conditional distribution $\overline{F}_t(s)$ is given for $s\geq t\geq 0$ by
\begin{equation}\label{eq:PH1}
\overline{F}_t(s) = \sum_{k=1}^m \boldsymbol{\pi}^{\top}(t) \mathbf{S}^{(k)} (t) e^{\mathbf{B}^{(k)}(s-t)}\mathbb{1}, \;\; \mathrm{with} \;\; \sum_{k=1}^m \mathbf{S}^{(k)}(t)=\mathbf{I}.
\end{equation}
\end{lem}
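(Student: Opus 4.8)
The plan is to reduce the survival probability to the transient block of the conditional transition matrix $\mathbf{P}(t,s)$ from Theorem \ref{theo:theo1}, and then average over the (unobserved but alive) current state using the Bayesian update $\boldsymbol{\pi}(t)$ from Corollary \ref{cor:Cor1}. Since $\Delta$ is absorbing, the event $\{\tau > s\}$ coincides with $\{X_s \in E\}$, so for $s\geq t$ being alive at time $s$ is the same as occupying a transient state at time $s$. Because $\mathcal{G}_t = \bigcup_{i\in E}\mathcal{F}_{t,i}$, I would first apply the law of total probability, decomposing over the current state:
\begin{equation*}
\overline{F}_t(s) = \mathbb{P}\{\tau > s \vert \mathcal{G}_t\} = \sum_{i \in E} \mathbb{P}\{\tau > s \vert \mathcal{F}_{t,i}\}\, \pi_i(t),
\end{equation*}
where $\pi_i(t) = \mathbb{P}\{X_t = i \vert \mathcal{G}_t\}$ is precisely the quantity in Corollary \ref{cor:Cor1}, which carries zero mass on $\Delta$.

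Next I would evaluate each conditional survival term as $\mathbb{P}\{\tau > s \vert \mathcal{F}_{t,i}\} = \mathbb{P}\{X_s \in E \vert \mathcal{F}_{t,i}\} = \sum_{j \in E} [\mathbf{P}(t,s)]_{i,j}$. Invoking the block partition (\ref{eq:blockPts}) of $\mathbf{P}(t,s)$, the transient-to-transient block equals $\sum_{k=1}^m \mathbf{S}^{(k)}(t) e^{\mathbf{B}^{(k)}(s-t)}$, so summing the $i$th row over $j \in E$ gives
\begin{equation*}
\mathbb{P}\{\tau > s \vert \mathcal{F}_{t,i}\} = \mathbf{e}_i^{\top} \sum_{k=1}^m \mathbf{S}^{(k)}(t) e^{\mathbf{B}^{(k)}(s-t)} \mathbb{1}.
\end{equation*}
Equivalently one may read off the absorption probability $[\mathbf{P}(t,s)]_{i,\Delta} = \mathbf{e}_i^{\top} \sum_{k=1}^m \mathbf{S}^{(k)}(t)\big(\mathbf{I} - e^{\mathbf{B}^{(k)}(s-t)}\big)\mathbb{1}$ directly from (\ref{eq:blockPts}) and subtract it from one, using $\sum_{k=1}^m \mathbf{S}^{(k)}(t) = \mathbf{I}$ (so that $\mathbf{e}_i^{\top}\sum_{k=1}^m \mathbf{S}^{(k)}(t)\mathbb{1} = 1$) to recover the same expression.

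Finally I would substitute back and collect the weights $\pi_i(t)$ into the row vector $\boldsymbol{\pi}^{\top}(t) = \sum_{i \in E}\pi_i(t)\,\mathbf{e}_i^{\top}$, yielding
\begin{equation*}
\overline{F}_t(s) = \boldsymbol{\pi}^{\top}(t) \sum_{k=1}^m \mathbf{S}^{(k)}(t) e^{\mathbf{B}^{(k)}(s-t)} \mathbb{1},
\end{equation*}
which is exactly (\ref{eq:PH1}). The only delicate point is the passage from $\{\tau > s\}$ to $\{X_s \in E\}$ combined with the conditional decomposition over $X_t = i$: this is where the absorbing nature of $\Delta$ and the consistency relation $\mathcal{G}_t = \bigcup_{i\in E}\mathcal{F}_{t,i}$ are used, and where one must check that conditioning on the finer $\mathcal{F}_{t,i}$ (rather than the coarser $\mathcal{G}_t$) is what licenses the use of the non-Markov transition kernel $\mathbf{P}(t,s)$ of Theorem \ref{theo:theo1}. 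Everything else is a routine matrix rearrangement.
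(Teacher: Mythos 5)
Your proposal is correct and follows essentially the same route as the paper's own proof: a law-of-total-probability decomposition over the current state weighted by the Bayesian update $\pi_i(t)$, evaluation of each term via the conditional transition kernel $\mathbf{P}(t,s)$ of Theorem \ref{theo:theo1}, and reduction through the block partition (\ref{eq:blockPts}) using $\sum_{j\in E}\mathbf{e}_j\mathbf{e}_j^{\top}=\mathrm{diag}(\mathbf{I},0)$. The only cosmetic difference is that you factor the argument through the intermediate identities (\ref{eq:PH0}) and (\ref{eq:relation}) (conditioning on the alive event so $\pi_{\Delta}(t)=0$), whereas the paper takes $\mathcal{G}_t=\mathcal{F}_{t-}$ and performs the two steps in one computation, with the $\Delta$-row of $\mathbf{P}(t,s)$ contributing nothing.
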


\begin{proof}
Without loss of generality, let $\mathcal{G}_t=\mathcal{F}_{t-}$. As $\tau$ is the first exit time of $X$ to the absorbing state $\Delta$, by applying the law of total probability we have
\begin{equation}\label{eq:pers1}
\begin{split}
\overline{F}_t(s):=\mathbb{P}\{\tau>s \big\vert \mathcal{G}_t\}=&\sum_{j\in E}\sum_{i\in\mathbb{S}} \mathbb{P}\big\{X_s=j, X_t= i \big\vert \mathcal{F}_{t-}\big\}.
\end{split}
\end{equation}
Again, by the law of total probability and Bayes' formula, we obtain
\begin{align*}
\mathbb{P}\big\{X_s=j, X_t= i \big\vert \mathcal{F}_{t-}\big\}=&
\mathbb{P}\big\{X_t=i \big\vert \mathcal{F}_{t-}\big\}\mathbb{P}\big\{X_s=j \big\vert X_t=i,\mathcal{F}_{t-}\big\}\\
=&\pi_i(t) \mathbf{P}_{i,j}(t,s) \\
=&\pi_i(t) \mathbf{e}_i^{\top} \mathbf{P}(t,s) \mathbf{e}_j\mathbf{e}_j^{\top}\mathbb{1}.
\end{align*}
Starting from equation (\ref{eq:pers1}), we have following the above expression that
\begin{align*}
\overline{F}_t(s)=& \widetilde{\boldsymbol{\pi}}^{\top}(t) \mathbf{P}(t,s)\Big[\sum_{j\in E} \mathbf{e}_j\mathbf{e}_j^{\top}\Big]\mathbb{1}.
\end{align*}
We arrive at the probability distribution (\ref{eq:PH1}) on account of $\sum\limits_{j\in E} \mathbf{e}_j\mathbf{e}_j^{\top}=\textrm{diag}(\mathbf{I},0)$ and the block partition (\ref{eq:blockPts}) of the transition probability matrix $\mathbf{P}(t,s)$.  \exit
\end{proof}

\medskip

Applying similar steps of derivation to the proof of (\ref{eq:PH1}), one can show that
\begin{equation}\label{eq:PH0}
\overline{F}_{i,t}(s):=\mathbb{P}\{\tau>s \big\vert \mathcal{F}_{t,i}\}= \sum_{k=1}^m \mathbf{e}_i^{\top} \mathbf{S}^{(k)} (t) e^{\mathbf{B}^{(k)}(s-t)}\mathbb{1}.
\end{equation}

\begin{lem}
Following the two identities (\ref{eq:PH1}) and (\ref{eq:PH0}), we deduce that
\begin{align}\label{eq:relation}
\mathbb{P}\{\tau > s \vert \mathcal{G}_{t}\}=\sum_{i\in E} \pi_i(t)\mathbb{P}\{\tau > s \vert \mathcal{F}_{t,i}\}.
\end{align}
\end{lem}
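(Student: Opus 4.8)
The plan is to derive (\ref{eq:relation}) directly from the two explicit formulas (\ref{eq:PH1}) and (\ref{eq:PH0}) by exploiting the linearity of the matrix--vector products that appear in them. The key observation is that the Bayesian probability row vector $\boldsymbol{\pi}^{\top}(t)$ is supported entirely on the transient set $E$: by Corollary \ref{cor:Cor1} we have $\pi_{\Delta}(t)=0$ under $\mathcal{G}_t$, so that $\boldsymbol{\pi}^{\top}(t)=\sum_{i\in E}\pi_i(t)\,\mathbf{e}_i^{\top}$, where $\mathbf{e}_i$ denotes the $i$th standard basis vector of the transient space.

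First I would substitute this decomposition into the right-hand side of (\ref{eq:PH1}), obtaining
\begin{align*}
\mathbb{P}\{\tau>s\vert\mathcal{G}_t\}=\sum_{k=1}^m\Big(\sum_{i\in E}\pi_i(t)\,\mathbf{e}_i^{\top}\Big)\mathbf{S}^{(k)}(t)\,e^{\mathbf{B}^{(k)}(s-t)}\mathbb{1}.
\end{align*}
Interchanging the two finite sums and pulling the scalar $\pi_i(t)$ outside the sum over $k$ then gives
\begin{align*}
\mathbb{P}\{\tau>s\vert\mathcal{G}_t\}=\sum_{i\in E}\pi_i(t)\sum_{k=1}^m\mathbf{e}_i^{\top}\,\mathbf{S}^{(k)}(t)\,e^{\mathbf{B}^{(k)}(s-t)}\mathbb{1}.
\end{align*}
Finally I would recognize the inner sum over $k$ as precisely the expression (\ref{eq:PH0}) for $\overline{F}_{i,t}(s)=\mathbb{P}\{\tau>s\vert\mathcal{F}_{t,i}\}$, which yields the claimed identity.

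There is essentially no analytic obstacle here; the argument is a bookkeeping manipulation resting on linearity. The only point that needs care is justifying that the outer summation runs over the transient states $E$ alone rather than over all of $\mathbb{S}$, which is exactly the content of $\pi_{\Delta}(t)=0$ in Corollary \ref{cor:Cor1} (so that the row-vector decomposition of $\boldsymbol{\pi}^{\top}(t)$ picks up no $\Delta$-component). As an alternative and perhaps more transparent route, one may bypass the explicit formulas and argue probabilistically: since the events $\{X_t=i\}_{i\in E}$ partition $\{X_t\neq\Delta\}$ and $\mathcal{G}_t=\bigcup_{i\in E}\mathcal{F}_{t,i}$, conditioning on the value of $X_t$ via the law of total probability gives $\mathbb{P}\{\tau>s\vert\mathcal{G}_t\}=\sum_{i\in E}\mathbb{P}\{\tau>s\vert X_t=i,\mathcal{G}_t\}\,\mathbb{P}\{X_t=i\vert\mathcal{G}_t\}$, where $\{X_t=i,\mathcal{G}_t\}=\mathcal{F}_{t,i}$ and $\mathbb{P}\{X_t=i\vert\mathcal{G}_t\}=\pi_i(t)$ by definition. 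Both routes deliver (\ref{eq:relation}).
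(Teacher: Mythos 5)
Your proposal is correct and takes essentially the same route the paper intends: the lemma is stated there as an immediate deduction from (\ref{eq:PH1}) and (\ref{eq:PH0}), which is precisely your decomposition $\boldsymbol{\pi}^{\top}(t)=\sum_{i\in E}\pi_i(t)\,\mathbf{e}_i^{\top}$ followed by an interchange of the finite sums over $i$ and $k$. The only remark worth making is that the appeal to Corollary \ref{cor:Cor1} is not even needed for the algebraic route, since $\boldsymbol{\pi}(t)$ in (\ref{eq:PH1}) is by construction a vector indexed by the transient states $E$ alone; and in the case $\mathcal{G}_t=\mathcal{F}_{t-}$, where the mass on $\Delta$ may be positive, the identity still holds because the $\Delta$-term in the total-probability decomposition vanishes ($\mathbb{P}\{\tau>s\,\vert\,\mathcal{F}_{t,\Delta}\}=0$ for $s\geq t$), which your probabilistic alternative implicitly uses.
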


Note that the measure $-d\overline{F}_t(s)$ has probability mass $f_t(t)=1-\boldsymbol{\pi}^{\top}(t)\mathbb{1}$ at the point $s=t$ when conditioning on $\mathcal{G}_{t}=\mathcal{F}_{t-}$, and no mass given $\mathcal{G}_t=\mathcal{F}_{t-}\cup \{X_t\neq \Delta\}$. Given that $\boldsymbol{\pi}^{\top}\mathbb{1}=1$, it has zero mass at $t=0$. It is absolutely continuous w.r.t Lebesgue measure $ds$ with density $f_t(s)$ on $\{s>t\}$. Following (\ref{eq:PH1}), the density function $f_t(s)$, its Laplace transform and $n$th moment are given below.

\pagebreak

\begin{theo}
The $\mathcal{G}_{t}-$conditional density function $f_t(s)$ is given for $s>t$ by
\begin{equation}\label{eq:PHD1}
f_t(s) = - \sum_{k=1}^m \boldsymbol{\pi}^{\top}(t) \mathbf{S}^{(k)}(t) e^{\mathbf{B}^{(k)}(s-t)}\mathbf{B}^{(k)}\mathbb{1}, \;\; \mathrm{with} \;\; \sum_{k=1}^m \mathbf{S}^{(k)}(t)=\mathbf{I}.
\end{equation}
\begin{enumerate}
\item[(i)] The Laplace transform $\Psi_t(\lambda)=\int_0^{\infty} e^{-\lambda u} f_t(t+u) du$ is given by
\begin{equation*}
\Psi_t(\lambda)=- \sum_{k=1}^m \boldsymbol{\pi}^{\top}(t) \mathbf{S}^{(k)}(t)\big(\lambda\mathbf{I}-\mathbf{B}^{(k)}\big)^{-1}\mathbf{B}^{(k)}\mathbb{1} + f_t(t).
\end{equation*}
\item[(ii)] The $\mathcal{G}_{t}-$conditional $n$th moment, \textrm{for $n=0,1,...$}, of $\tau$ is given by
\begin{equation*}
\mathbb{E}\{\tau^n \vert \mathcal{G}_{t}\}=(-1)^n n! \sum_{k=1}^m \boldsymbol{\pi}^{\top}(t) \mathbf{S}^{(k)}(t) \big[\mathbf{B}^{(k)}\big]^{-n}\mathbb{1}.
\end{equation*}
\end{enumerate}
\end{theo}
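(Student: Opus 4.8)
The plan is to start from the conditional survival function $\overline{F}_t(s)$ given in (\ref{eq:PH1}) and obtain the density by differentiation, then derive the Laplace transform and moments as direct consequences. The three parts are tightly linked: once (\ref{eq:PHD1}) is established, the Laplace transform and the moment formula follow by routine integration against $e^{-\lambda u}$ and by differentiating the transform (or integrating directly), respectively.

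First I would recall from the discussion preceding the theorem that, conditional on $\mathcal{G}_t=\mathcal{F}_{t-}$, the measure $-d\overline{F}_t(s)$ has an atom $f_t(t)=1-\boldsymbol{\pi}^\top(t)\mathbb{1}$ at $s=t$ but is absolutely continuous on $\{s>t\}$. So for $s>t$ the density is obtained by differentiating (\ref{eq:PH1}) in $s$. Since $\mathbf{S}^{(k)}(t)$ and $\boldsymbol{\pi}(t)$ depend on $t$ but not on $s$, the only $s$-dependence sits in the matrix exponential $e^{\mathbf{B}^{(k)}(s-t)}$, whose $s$-derivative is $\mathbf{B}^{(k)}e^{\mathbf{B}^{(k)}(s-t)}=e^{\mathbf{B}^{(k)}(s-t)}\mathbf{B}^{(k)}$ (the two commute). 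Hence
\begin{align*}
f_t(s)=-\frac{d}{ds}\overline{F}_t(s)
=-\sum_{k=1}^m \boldsymbol{\pi}^\top(t)\,\mathbf{S}^{(k)}(t)\,e^{\mathbf{B}^{(k)}(s-t)}\mathbf{B}^{(k)}\mathbb{1},
\end{align*}
which is exactly (\ref{eq:PHD1}); the constraint $\sum_k \mathbf{S}^{(k)}(t)=\mathbf{I}$ is inherited from (\ref{eq:PH1}).

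For part (i), I would substitute $s=t+u$ and compute $\Psi_t(\lambda)=\int_0^\infty e^{-\lambda u} f_t(t+u)\,du$. Pulling the $t$-dependent factors outside the integral, the core computation is $\int_0^\infty e^{-\lambda u} e^{\mathbf{B}^{(k)}u}\,du=(\lambda\mathbf{I}-\mathbf{B}^{(k)})^{-1}$, valid because $\mathbf{B}^{(k)}$ is negative definite so that $\lambda\mathbf{I}-\mathbf{B}^{(k)}$ is invertible for $\lambda\ge 0$ and the integrand decays. This yields the displayed $-\sum_k \boldsymbol{\pi}^\top(t)\mathbf{S}^{(k)}(t)(\lambda\mathbf{I}-\mathbf{B}^{(k)})^{-1}\mathbf{B}^{(k)}\mathbb{1}$; the additive term $f_t(t)$ accounts for the atom at $s=t$, i.e. it is the Laplace transform of the point mass, and should be attached by treating the full measure $-d\overline{F}_t$ rather than the density alone. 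For part (ii), I would compute $\mathbb{E}\{\tau^n\mid\mathcal{G}_t\}$ directly, writing $\tau^n$ against $-d\overline{F}_t$; the atom at $s=t$ contributes $t^n f_t(t)$, but the stated clean formula corresponds to the case $t=0$ (equivalently, evaluating the moment of the absolutely continuous part based at the origin), where the key matrix integral is $\int_0^\infty u^n e^{\mathbf{B}^{(k)}u}\,du=(-1)^{n+1}n!\,[\mathbf{B}^{(k)}]^{-(n+1)}$, and then $-\mathbf{B}^{(k)}\mathbb{1}$ collapses one power to give $(-1)^n n!\,[\mathbf{B}^{(k)}]^{-n}\mathbb{1}$.

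The main obstacle is bookkeeping around the atom $f_t(t)$ at $s=t$: the density formula (\ref{eq:PHD1}) is valid only on $\{s>t\}$, so the Laplace transform and the moments are genuinely moments of a mixed measure, and one must be careful that the additive $f_t(t)$ in part (i) and the absence of a boundary term in part (ii) are handled consistently. The cleanest route is to note that $\sum_k \boldsymbol{\pi}^\top(t)\mathbf{S}^{(k)}(t)\mathbb{1}=\boldsymbol{\pi}^\top(t)\mathbb{1}$, so that the total mass of the absolutely continuous part plus $f_t(t)$ equals one, which lets one verify $\Psi_t(0)=1$ as a consistency check. The matrix-exponential integrals themselves are standard once negative definiteness of $\mathbf{B}^{(k)}$ (established earlier in the excerpt) guarantees convergence and invertibility.
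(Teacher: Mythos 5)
Your proposal is correct and follows essentially the same route the paper intends: the theorem is stated without a written proof, the implied derivation being exactly yours --- differentiate $\overline{F}_t(s)$ from (\ref{eq:PH1}) in $s$ (only the factor $e^{\mathbf{B}^{(k)}(s-t)}$ depends on $s$), then use the standard matrix identities $\int_0^\infty e^{-\lambda u}e^{\mathbf{B}^{(k)}u}\,du=(\lambda\mathbf{I}-\mathbf{B}^{(k)})^{-1}$ and $\int_0^\infty u^n e^{\mathbf{B}^{(k)}u}\,du=(-1)^{n+1}n!\,[\mathbf{B}^{(k)}]^{-(n+1)}$, justified by negative definiteness of $\mathbf{B}^{(k)}$. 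Your bookkeeping around the atom is also sound, including the accurate observation that the clean formula in (ii) is literally the $n$th moment of $\tau-t$ (equivalently of $\tau$ when $t=0$), i.e.\ what one gets from $(-1)^n\frac{d^n}{d\lambda^n}\Psi_t(\lambda)\big\vert_{\lambda=0}$, a minor imprecision in the paper's statement rather than a gap in your argument.
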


\noindent Setting $\mathbf{B}^{(k)}=\mathbf{B}$ in (\ref{eq:PHD1}), in which case $X$ never changes the speed, the above results coincide with that of given in \cite{Neuts1975} and Proposition 4.1 in \cite{Asmussen2003} for $t=0$.

The following theorem summarizes the dense and closure properties under finite convex mixtures and convolutions of $\overline{F}_t(s)$ (\ref{eq:PH1}). They can be established using matrix analytic approach \cite{Neuts1981}. See for e.g. Theorems 4.12 and 4.13 in \cite{Surya2018}.

\begin{theo}
The phase-type distribution $\overline{F}_t(s)$ (\ref{eq:PH1}) is closed under finite convex mixtures and convolutions, and forms a dense class of distributions on $\mathbb{R}_+$.
\end{theo}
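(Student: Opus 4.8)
The plan is to show that, for each fixed $t\geq 0$, the survival function $\overline{F}_t(s)$ in (\ref{eq:PH1}) --- viewed as a function of the residual time $u:=s-t\geq 0$ --- is itself an ordinary univariate phase-type survival function, and then to invoke the classical matrix-analytic closure and denseness results for that class (Neuts \cite{Neuts1981}, Asmussen \cite{Asmussen2003}), exactly as in Theorems 4.12 and 4.13 of \cite{Surya2018}. First I would set $\boldsymbol{\alpha}^{(k)}(t):=\mathbf{S}^{(k)}(t)\boldsymbol{\pi}(t)$, which is legitimate since $\mathbf{S}^{(k)}(t)$ is diagonal, and rewrite (\ref{eq:PH1}) as $\overline{F}_t(t+u)=\sum_{k=1}^m \boldsymbol{\alpha}^{(k)}(t)^{\top}e^{\mathbf{B}^{(k)}u}\mathbb{1}$. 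Stacking the $m$ blocks into the $mn$-dimensional initial vector $\boldsymbol{\alpha}(t)=\big(\boldsymbol{\alpha}^{(1)}(t)^{\top},\dots,\boldsymbol{\alpha}^{(m)}(t)^{\top}\big)^{\top}$ and the block-diagonal sub-generator $\mathbf{B}=\mathrm{diag}(\mathbf{B}^{(1)},\dots,\mathbf{B}^{(m)})$ yields $\overline{F}_t(t+u)=\boldsymbol{\alpha}(t)^{\top}e^{\mathbf{B}u}\mathbb{1}$, so that $\overline{F}_t$ admits the phase-type representation $(\boldsymbol{\alpha}(t),\mathbf{B})$. Nonsingularity of $\mathbf{B}$ and nonnegativity of $-\mathbf{B}\mathbb{1}$ are inherited block-by-block from each $\mathbf{B}^{(k)}$, and the mass identity $\sum_k\mathbf{S}^{(k)}(t)=\mathbf{I}$ gives $\boldsymbol{\alpha}(t)^{\top}\mathbb{1}=\boldsymbol{\pi}(t)^{\top}\mathbb{1}$, which equals $1$ under $\mathcal{G}_t=\mathcal{F}_{t-}\cup\{X_t\neq\Delta\}$ (an atom at $u=0$ otherwise, corresponding to the defect $f_t(t)$ noted above).

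Next I would establish the two closure statements by the standard block constructions. For convex mixtures, given finitely many such representations $(\boldsymbol{\alpha}_1,\mathbf{B}_1),\dots,(\boldsymbol{\alpha}_r,\mathbf{B}_r)$ and weights $p_1,\dots,p_r\geq 0$ with $\sum_i p_i=1$, the mixture $\sum_i p_i\overline{F}_t^{(i)}$ has representation $\big((p_1\boldsymbol{\alpha}_1^{\top},\dots,p_r\boldsymbol{\alpha}_r^{\top}),\ \mathrm{diag}(\mathbf{B}_1,\dots,\mathbf{B}_r)\big)$, which is again of the form produced in Step~1. For convolutions, if the two factors have representations $(\boldsymbol{\alpha},\mathbf{A})$ and $(\boldsymbol{\beta},\mathbf{C})$ with exit vector $\mathbf{a}=-\mathbf{A}\mathbb{1}$, then the ``restart upon absorption'' generator
\[
\left(\begin{array}{cc} \mathbf{A} & \mathbf{a}\,\boldsymbol{\beta}^{\top}\\ \mathbf{0} & \mathbf{C}\end{array}\right),
\]
with initial vector $(\boldsymbol{\alpha}^{\top},\,\mathbf{0}^{\top})$ (plus the obvious correction $(\boldsymbol{\alpha}^{\top},\,\alpha_0\boldsymbol{\beta}^{\top})$ if $\boldsymbol{\alpha}$ carries defect $\alpha_0$ at the origin), produces $\overline{F}_t^{(1)}\ast\overline{F}_t^{(2)}$ as its absorption-time law. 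Each construction keeps us inside the phase-type class and hence, by Step~1, inside the class (\ref{eq:PH1}); these are the matrix-analytic verifications alluded to after the statement.

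Finally, denseness follows because the family obtained in Step~1 is precisely the class of univariate phase-type distributions on $\mathbb{R}_+$, which is well known to be dense, in the topology of weak convergence, in the set of all probability distributions on $[0,\infty)$ (Neuts \cite{Neuts1981}, Asmussen \cite{Asmussen2003}); any target law can be approximated arbitrarily well by Erlang or Coxian mixtures, all of which arise as special cases of $(\boldsymbol{\alpha}(t),\mathbf{B})$. The only genuinely delicate point is Step~1: one must check that the $t$-dependent, Bayesian-updated weights $\mathbf{S}^{(k)}(t)$ and $\boldsymbol{\pi}(t)$ do not spoil the phase-type structure --- that is, that $\boldsymbol{\alpha}(t)$ is a (sub-)probability vector and $\mathbf{B}$ a bona fide sub-intensity matrix --- so that the non-stationarity in $t$ is absorbed entirely into the initial vector while the generator $\mathbf{B}$ stays constant. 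Once this reduction is secured, Steps~2 and~3 are routine block-matrix computations and Step~4 is a citation.
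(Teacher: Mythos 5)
Your proof is correct and takes essentially the approach the paper intends: the paper gives no argument of its own, simply deferring to the matrix-analytic method of Neuts \cite{Neuts1981} and Theorems 4.12--4.13 of \cite{Surya2018}, and your Step~1 reduction --- stacking the vectors $\mathbf{S}^{(k)}(t)\boldsymbol{\pi}(t)$ into an $mn$-dimensional initial vector against the block-diagonal subgenerator $\mathrm{diag}\big(\mathbf{B}^{(1)},\dots,\mathbf{B}^{(m)}\big)$, followed by the standard block constructions for mixtures and convolutions and the classical denseness citation --- is precisely that argument, including the correct handling of the defect $1-\boldsymbol{\pi}^{\top}(t)\mathbb{1}$ as an atom at $u=0$. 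The one point to state explicitly rather than leave implicit is the reverse inclusion used when you say the family is ``precisely'' the univariate phase-type class (needed both for denseness and for concluding the mixture/convolution constructions land back inside the class (\ref{eq:PH1})); it is immediate by taking $m=1$, $t=0$ and $\mathbf{S}^{(1)}=\mathbf{I}$.
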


\subsection{Conditional bivariate distributions}

As in the univariate case, we consider the mixture process $X$ (\ref{eq:mixture}) on the finite state space $\mathbb{S}=E\cup\{\Delta\}$. Following \cite{Assaf1984}, let $\boldsymbol{\Gamma}_1$ and $\boldsymbol{\Gamma}_2$ be two nonempty stochastically closed subsets of $\mathbb{S}$ such that $\boldsymbol{\Gamma}_1\cap \boldsymbol{\Gamma}_2$ is a proper subset of $\mathbb{S}$. We assume without loss of generality that $\boldsymbol{\Gamma}_1\cap \boldsymbol{\Gamma}_2=\Delta$ and the absorption into $\Delta$ is certain, i.e., the generator matrices $\{\mathbf{B}^{(k)}\}$ need to be nonsingular. As $\boldsymbol{\Gamma}_l$, $l=1,2$, are stochastically closed sets, necessarily we have $[\mathbf{Q}^{(k)}]_{i,j}=0$ if $i\in\boldsymbol{\Gamma}_l$ and $j\in \boldsymbol{\Gamma}_l^c$.

We denote by $\widetilde{\boldsymbol{\pi}}$ the initial probability vector on $\mathbb{S}$ such that $\pi_{\Delta}=0$. We shall assume that $\boldsymbol{\pi}_i\neq 0$ if $i\in \boldsymbol{\Gamma}_1^c\cap \boldsymbol{\Gamma}_2^c$ implying $\mathbb{P}\{\tau_1>0, \tau_2>0\}=1$. As before, $\mathcal{F}_{t,i}=\mathcal{F}_{t-}\cup\{X_t=i\}$ defines all previous and current information of $X$.

\subsubsection{Conditional joint survival function of $\tau_1$ and $\tau_2$}

The joint distribution of $\tau_k$ (\ref{eq:MPHnew}), for $k=1,2$, are given by the following.
\begin{lem}\label{lem:lemjointCDF}
The identity for $\mathcal{F}_{t,i}-$conditional joint distribution $\overline{F}_{i,t}(t_1,t_2)=\mathbb{P}\{\tau_1>t_1, \tau_2>t_2 \vert \mathcal{F}_{t,i}\}$ of $\tau_1$ and $\tau_2$ is given for $t_1,t_2\geq t\geq 0$ and $i\in E$ by
\begin{align*}
\overline{F}_{i,t}(t_1,t_2)=
\begin{cases}
 \sum_{k=1}^m\mathbf{e}_i^{\top} \mathbf{S}^{(k)}(t)e^{\mathbf{B}^{(k)}(t_{2}-t)}\mathbf{H}_2e^{\mathbf{B}^{(k)}(t_1-t_2)}\mathbf{H}_1\mathbb{1}, \;\; \textrm{if $t_1\geq t_2\geq t \geq 0$} \\[8pt]
\sum_{k=1}^m \mathbf{e}_i^{\top} \mathbf{S}^{(k)}(t)e^{\mathbf{B}^{(k)}(t_{1}-t)}\mathbf{H}_1e^{\mathbf{B}^{(k)}(t_2-t_1)}\mathbf{H}_2\mathbb{1}, \; \textrm{if $t_2\geq t_1\geq t \geq 0$}.
\end{cases}
\end{align*}
with $\sum_{k=1}^m \mathbf{S}^{(k)}(t)=\mathbf{I}.$
Note that we have used $\mathbf{H}_k$ to denote a $(n\times n)-$diagonal matrix whose $i$th diagonal element for $i\in E$ equals $1$ if $i\in\Gamma_k^c$ and is $0$ otherwise.
\end{lem}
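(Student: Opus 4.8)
The plan is to condition first on the unobservable speed regime $\phi$, which is precisely the device that restores the Markov property lost by the mixture (cf.\ the discussion following Theorem \ref{theo:theo1}). Writing the law of total probability over $\{\phi=k\}$ and applying Bayes' rule gives
\[
\overline{F}_{i,t}(t_1,t_2)=\sum_{k=1}^m \mathbb{P}\{\phi=k\mid \mathcal{F}_{t,i}\}\,\mathbb{P}\{\tau_1>t_1,\tau_2>t_2\mid \phi=k,\mathcal{F}_{t,i}\}.
\]
By Proposition \ref{prop:lem1} the first factor is exactly the switching probability $s_i^{(k)}(t)$, so the task reduces to evaluating the conditional joint survival probability inside a single regime $k$, where the process coincides with the genuine Markov jump process $X^{(k)}$.

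Within regime $k$, the Markov property collapses $\mathcal{F}_{t,i}$, as far as the future is concerned, to the current state $\{X_t=i\}$, and time-homogeneity lets me reset the clock to $t$. The heart of the argument is then a conditional analogue of the multivariate phase-type identity (\ref{eq:MPH}) for $X^{(k)}$ started from state $i$ at time $t$. For the ordering $t_1\geq t_2\geq t$ I would exploit the stochastic closedness of $\Gamma_1$ and $\Gamma_2$: since $X^{(k)}$ starting in $\Gamma_l^c$ can never re-enter $\Gamma_l^c$ once it visits $\Gamma_l$, the continuous-time survival event $\{\tau_l>t_l\}$ collapses pathwise to the single-checkpoint event $\{X_{t_l}\in\Gamma_l^c\}$. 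Hence $\{\tau_1>t_1,\tau_2>t_2\}$ becomes $\{X_{t_2}\in\Gamma_2^c\}\cap\{X_{t_1}\in\Gamma_1^c\}$; inserting the intermediate state at $t_2$ and applying the Markov property of $X^{(k)}$ across $[t,t_2]$ and $[t_2,t_1]$ yields
\[
\mathbb{P}\{\tau_1>t_1,\tau_2>t_2\mid \phi=k,X_t=i\}=\mathbf{e}_i^\top e^{\mathbf{B}^{(k)}(t_2-t)}\mathbf{H}_2\,e^{\mathbf{B}^{(k)}(t_1-t_2)}\mathbf{H}_1\mathbb{1},
\]
where the diagonal projectors $\mathbf{H}_2,\mathbf{H}_1$ implement the checkpoint restrictions and $e^{\mathbf{B}^{(k)}s}$ is the transient transition block of (\ref{eq:blockpartisi}), which equals $\mathbb{P}\{X_s=j\mid X_0=i\}$ on $E\times E$ because $\Delta$ is absorbing.

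Substituting back and using that $\mathbf{S}^{(k)}(t)$ is diagonal, so $s_i^{(k)}(t)\mathbf{e}_i^\top=\mathbf{e}_i^\top\mathbf{S}^{(k)}(t)$, produces the first branch; the second branch ($t_2\geq t_1\geq t$) follows verbatim after interchanging the roles of $(\Gamma_1,t_1)$ and $(\Gamma_2,t_2)$. I expect the delicate point to be the checkpoint reduction, specifically verifying that conditioning on the full history $\mathcal{F}_{t,i}$ rather than merely on $\{X_t=i\}$ introduces no extra past-dependence. This too is settled by stochastic closedness: whether either constraint was already violated before time $t$ is completely determined by whether $i\in\Gamma_1$ or $i\in\Gamma_2$ (in which latter cases $X_t=i\in\Gamma_l$ forces $\tau_l\leq t$ and both the probability and the formula vanish, since the row $i$ of $e^{\mathbf{B}^{(k)}s}$ is then supported on $\Gamma_l\cap E$ and is annihilated by $\mathbf{H}_l$), so the conditional probability depends on the past only through the state $i$.
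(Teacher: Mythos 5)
Your proposal is correct and takes essentially the same route as the paper's own proof: both reduce the survival events $\{\tau_l>t_l\}$ to the checkpoint events $\{X_{t_l}\in\boldsymbol{\Gamma}_l^c\}$ via stochastic closedness, decompose over the hidden regime $\phi=k$ by the law of total probability and Bayes' rule so that $\mathbb{P}\{\phi=k\,\vert\,\mathcal{F}_{t,i}\}=s_i^{(k)}(t)$, apply the Markov property of $X^{(k)}$ across $[t,t_{i_1}]$ and $[t_{i_1},t_{i_2}]$, and assemble the matrix form using $\mathbf{H}_{i_k}=\sum_{J\in\Gamma_{i_k}^c}\mathbf{e}_J\mathbf{e}_J^{\top}$ together with the block partition (\ref{eq:blockpartisi}). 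Conditioning on $\phi$ first and then invoking a regime-conditional analogue of (\ref{eq:MPH}), rather than summing over intermediate states with the regime decomposition inside as the paper does, is only a cosmetic reordering, and your closing check that the formula vanishes when $i\in\boldsymbol{\Gamma}_l\cap E$ is a sound but inessential addition.
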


\pagebreak

\begin{proof}
\noindent To begin with, let $(t_{i_1},t_{i_2})$, with $t_{i_2}\geq t_{i_1}$ be the ordering of $(t_1,t_2)$, with $t_{i_1}\geq t_{i_0}=t$. Since $\tau_{i_k}$, $k=1,2$, is the first exit time of $X$ (\ref{eq:mixture}) to $\Gamma_{i_k}$,
\begin{align}
\mathbb{P}\{\tau_1>t_1,\tau_2>t_2 \big\vert \mathcal{F}_{t_{i_0},i}\}=&\mathbb{P}\{\tau_{i_1}>t_{i_1}, \tau_{i_2}>t_{i_2} \big\vert \mathcal{F}_{t_{i_0},i}\} \nonumber\\
=& \mathbb{P}\{X_{t_{i_1}}\in\boldsymbol{\Gamma}_{i_1}^c, X_{t_{i_2}}\in\boldsymbol{\Gamma}_{i_2}^c \big\vert \mathcal{F}_{t_{i_0},i} \nonumber\}\\
=&\sum_{J_{i_1}\in\Gamma_{i_1}^c}\sum_{J_{i_2}\in\Gamma_{i_2}^c} \mathbb{P}\{X_{t_{i_1}}=J_{i_1}, X_{t_{i_2}}=J_{i_2} \big\vert  \mathcal{F}_{t_{i_0},i}\}. \label{eq:turunan1}
\end{align}
The probability on the r.h.s of the last equality can be worked out as follows.
\begin{align*}
&\mathbb{P}\big\{X_{t_{i_1}}=J_{i_1}, X_{t_{i_2}}=J_{i_2} \big\vert  \mathcal{F}_{t_{i_0},i}\big\}=\sum_{k=1}^m \mathbb{P}\big\{X_{t_{i_1}}=J_{i_1}, X_{t_{i_2}}=J_{i_2}, \phi=k \big\vert  \mathcal{F}_{t_{i_0},i}\big\}\\
&\hspace{2cm}= \sum_{k=1}^m \mathbb{P}\big\{X_{t_{i_0}}=J_{i_0}\vert \mathcal{F}_{t_{i_0},i}\big\}\mathbb{P}\big\{\phi=k \big\vert X_{t_{i_0}}=J_{i_0},\mathcal{F}_{t_{i_0},i}\big\} \\
&\hspace{3.5cm}\times \mathbb{P}\big\{X_{t_{i_1}}=J_{i_1} \big\vert \phi=k, X_{t_{i_0}}=J_{i_0},\mathcal{F}_{t_{i_0},i}\big\}\\
&\hspace{4.5cm}\times \mathbb{P}\big\{X_{t_{i_2}}=J_{i_2} \big\vert \phi=k, X_{t_{i_1}}=J_{i_1}, X_{t_{i_0}}=J_{i_0},\mathcal{F}_{t_{i_0},i}\big\}\\[6pt]
&\hspace{2cm}= \sum_{k=1}^m \mathbb{1}_{\{J_{i_0}=i\}} s_{J_{i_0}}^{(k)}(t_{i_0}) \mathbf{P}_{J_{i_0},J_{i_1}}^{Q^{(k)}}(t_{i_0},t_{i_1})\mathbf{P}_{J_{i_1},J_{i_2}}^{Q^{(k)}}(t_{i_1},t_{i_2}) \\
&\hspace{2cm}=\sum_{k=1}^m \mathbf{e}_i^{\top}\widetilde{\mathbf{S}}^{(k)}(t) e^{\mathbf{Q}^{(k)}(t_{i_1}-t_{i_0})}\mathbf{e}_{J_{i_1}} \mathbf{e}_{J_{i_1}}^{\top} e^{\mathbf{Q}^{(k)}(t_{i_2}-t_{i_1})} \mathbf{e}_{J_{i_2}}  \mathbf{e}_{J_{i_2}}^{\top}\mathbb{1}.
\end{align*}
Note that we have applied the law of total probability and Bayes' rule for conditional probability in the above equality. Recall that $\mathbb{P}\big\{X_{t_{i_0}}=J_{i_0}\vert \mathcal{F}_{t_{i_0},i}\big\}=1$ iff $J_{i_0}=i$ and zero otherwise. Therefore, starting from eqn. (\ref{eq:turunan1}), we have
\begin{align*}
&\mathbb{P}\{\tau_{i_1}>t_{i_1}, \tau_{i_2}>t_{i_2} \big\vert \mathcal{F}_{t_{i_0},i}\}=\sum_{J_{i_1}\in\Gamma_{i_1}^c}\sum_{J_{i_2}\in\Gamma_{i_2}^c} \mathbb{P}\{X_{t_{i_1}}=J_{i_1}, X_{t_{i_2}}=J_{i_2} \big\vert  \mathcal{F}_{t_{i_0},i}\}\\
&\hspace{0.5cm}= \sum_{k=1}^m \mathbf{e}_i^{\top}\widetilde{\mathbf{S}}^{(k)}(t) e^{\mathbf{Q}^{(k)}(t_{i_1}-t_{i_0})}\Big(\sum_{J_{i_1}\in\Gamma_{i_1}^c} \mathbf{e}_{J_{i_1}}\mathbf{e}_{J_{i_1}}^{\top} \Big) e^{\mathbf{Q}^{(k)}(t_{i_2}-t_{i_1})} \Big(\sum_{J_{i_2}\in\Gamma_{i_2}^c}  \mathbf{e}_{J_{i_2}}  \mathbf{e}_{J_{i_2}}^{\top}\Big)\mathbb{1},
\end{align*}
leading to $\overline{F}_{i,t}(t_1,t_2)$ on account of $\mathbf{H}_{i_k}=\sum\limits_{J_{i_k}\in\Gamma_{i_k}^c} \mathbf{e}_{J_{i_k}}\mathbf{e}_{J_{i_k}}^{\top}$, (\ref{eq:St}) and (\ref{eq:blockpartisi}). \exit
\end{proof}
\begin{prop}
The distribution $\overline{F}_{t}(t_1,t_2)=\mathbb{P}\{\tau_1>t_1, \tau_2>t_2 \vert \mathcal{G}_{t}\}$ is given by
\begin{align*}
\overline{F}_{t}(t_1,t_2)=
\begin{cases}
\sum_{k=1}^m \boldsymbol{\pi}^{\top}(t) \mathbf{S}^{(k)}(t)e^{\mathbf{B}^{(k)}(t_{2}-t)}\mathbf{H}_2e^{\mathbf{B}^{(k)}(t_1-t_2)}\mathbf{H}_1 \mathbb{1}, \; \textrm{if $t_1\geq t_2\geq t\geq 0$} \\[8pt]
\sum_{k=1}^m \boldsymbol{\pi}^{\top}(t) \mathbf{S}^{(k)}(t)e^{\mathbf{B}^{(k)}(t_{1}-t)}\mathbf{H}_1e^{\mathbf{B}^{(k)}(t_2-t_1)}\mathbf{H}_2 \mathbb{1},  \; \textrm{if $t_2\geq t_1\geq t \geq 0$}.
\end{cases}
\end{align*}
\end{prop}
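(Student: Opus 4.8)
The plan is to obtain $\overline{F}_t(t_1,t_2)$ from the already-established $\mathcal{F}_{t,i}$-conditional formula of Lemma \ref{lem:lemjointCDF} by averaging over the current state $X_t$, exactly in the way the univariate identity (\ref{eq:relation}) is deduced from (\ref{eq:PH0}). Without loss of generality I take $\mathcal{G}_t=\mathcal{F}_{t-}$. First I would apply the law of total probability, conditioning on the value of $X_t$:
$$\mathbb{P}\{\tau_1>t_1,\tau_2>t_2\mid\mathcal{F}_{t-}\}=\sum_{i\in\mathbb{S}}\mathbb{P}\{X_t=i\mid\mathcal{F}_{t-}\}\,\mathbb{P}\{\tau_1>t_1,\tau_2>t_2\mid X_t=i,\mathcal{F}_{t-}\}.$$

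Then I would observe two facts. First, since $t_1,t_2\geq t$ and $\Delta\in\boldsymbol{\Gamma}_1\cap\boldsymbol{\Gamma}_2$, the event $\{X_t=\Delta\}$ forces $\tau_1,\tau_2\leq t$, so the term $i=\Delta$ vanishes and the sum effectively runs over $i\in E$ only. Second, by definition $\mathcal{F}_{t,i}=\mathcal{F}_{t-}\cup\{X_t=i\}$, so each conditional probability on the right is precisely $\overline{F}_{i,t}(t_1,t_2)$, while $\mathbb{P}\{X_t=i\mid\mathcal{F}_{t-}\}=\pi_i(t)$ is the Bayesian update of Corollary \ref{cor:Cor1}. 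Together these give the bivariate analogue of (\ref{eq:relation}), namely $\overline{F}_t(t_1,t_2)=\sum_{i\in E}\pi_i(t)\,\overline{F}_{i,t}(t_1,t_2)$.

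Finally I would substitute the explicit expression from Lemma \ref{lem:lemjointCDF}. In the ordering $t_1\geq t_2\geq t$ this yields
$$\overline{F}_t(t_1,t_2)=\sum_{i\in E}\pi_i(t)\sum_{k=1}^m\mathbf{e}_i^\top\mathbf{S}^{(k)}(t)e^{\mathbf{B}^{(k)}(t_2-t)}\mathbf{H}_2e^{\mathbf{B}^{(k)}(t_1-t_2)}\mathbf{H}_1\mathbb{1},$$
and the claim follows after interchanging the two finite sums and collapsing $\sum_{i\in E}\pi_i(t)\mathbf{e}_i^\top=\boldsymbol{\pi}^\top(t)$, which is legitimate because $\pi_\Delta(t)=0$ by Corollary \ref{cor:Cor1}; the ordering $t_2\geq t_1\geq t$ is treated identically after swapping the roles of the two indices.

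The linear-algebra bookkeeping is routine. The one step that needs genuine care --- and which I expect to be the main obstacle --- is the justification that conditioning on $\mathcal{G}_t$ and then on $\{X_t=i\}$ reproduces exactly the information set $\mathcal{F}_{t,i}$ appearing in Lemma \ref{lem:lemjointCDF}, together with the argument that $\{X_t=\Delta\}$ contributes nothing to the decomposition. Once the restriction to $i\in E$ is in place, the assembly of the row vector $\boldsymbol{\pi}^\top(t)$ from the $\pi_i(t)\mathbf{e}_i^\top$ is immediate.
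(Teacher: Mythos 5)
Your proposal is correct and follows essentially the same route as the paper, which proves this proposition in one line by combining the law of total probability with the decomposition $\overline{F}_t(t_1,t_2)=\sum_{i\in E}\pi_i(t)\,\overline{F}_{i,t}(t_1,t_2)$ inherited from the univariate identity (\ref{eq:relation}) and then substituting Lemma \ref{lem:lemjointCDF}. The extra care you take over the vanishing $\{X_t=\Delta\}$ term and the identification $\mathcal{F}_{t,i}=\mathcal{F}_{t-}\cup\{X_t=i\}$ is a sound elaboration of details the paper leaves implicit, not a different argument.
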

\begin{proof}
By (\ref{eq:relation}) and law of total probability, $F_t(t_1,t_2)=\sum\limits_{i\in E} \pi_i(t)F_{i,t}(t_1,t_2)$. \exit
\end{proof}

\pagebreak

\begin{Rem}\label{rem:probabmass}
As $\mathbf{H}_2\mathbf{H}_1=\mathbf{H}_1\mathbf{H}_2$, the measures $d\overline{F}_{i,t}(t_1,t_2)$ and $d\overline{F}_{t}(t_1,t_2)$ have probability mass $1-\mathbf{e}_i^{\top}\mathbf{H}_2\mathbf{H}_1\mathbb{1}$ and $1-\boldsymbol{\pi}^{\top}(t)\mathbf{H}_2\mathbf{H}_1\mathbb{1}$, respectively, at the point $(t_1=t,t_2=t)$. They are absolutely continuous w.r.t Lebesgue measure $dt_1dt_2$ with density $f_{i,t}(t_1,t_2)$ and $f_t(t_1,t_2)$, subsequently, on $\{(t_1,t_2)\in\mathbb{R}_+^2: t_1,t_2>t\}.$
\end{Rem}

\subsubsection{Conditional joint probability density function}

In general, the joint distribution $\overline{F}_{i,t}(t_1,t_2)$ (resp. $\overline{F}_{t}(t_1,t_2)$) has a singular component $\overline{F}_{i,t}^{(0)}(t_1,t_2)$ (resp. $\overline{F}_{t}^{(0)}(t_1,t_2)$) on the set $\{(t_1,t_2): t_2=t_1\}$. The singular component can be obtained by deriving the joint density of $\tau_1$ and $\tau_2$ and deduce the absolutely continuous and singular parts of the pdf, such as discussed in the theorem below. For non-matrix based bivariate function, see for instance \cite{Sarhan}.
\begin{theo}\label{theo:maintheo}
Given the joint distribution $\overline{F}_{i,t}(t_1,t_2)$ of $(\tau_1,\tau_2)$ as specified in Lemma \ref{lem:lemjointCDF}, the joint probability density $f_{i,t}(t_1,t_2)$ of $(\tau_1,\tau_2)$ is given by
\begin{eqnarray} \label{eq:jointpdfit}
f_{i,t}(t_1,t_2)=
\begin{cases}
f_{i,t}^{(1)}(t_1,t_2), &\; \textrm{if $t_1\geq t_2 > t \geq 0$} \\[4pt]
f_{i,t}^{(2)}(t_1,t_2), &\; \textrm{if $t_2\geq t_1 > t \geq 0$}\\[4pt]
f_{i,t}^{(0)}(t_1,t_1), &\; \textrm{if $t_1= t_2 >  t \geq 0$}, \\[4pt]
1-\mathbf{e}_i^{\top}\mathbf{H}_2\mathbf{H}_1\mathbb{1},  &\; \textrm{if $t_1= t_2 = t \geq 0$},
\end{cases}
\end{eqnarray}
where the absolutely continuous components $f_{i,t}^{(1)}(t_1,t_2)$ and $f_{i,t}^{(2)}(t_1,t_2)$ are
\begin{align*}
f_{i,t}^{(1)}(t_1,t_2)=& \sum_{k=1}^m \mathbf{e}_i^{\top} \mathbf{S}^{(k)}(t)e^{\mathbf{B}^{(k)}(t_{2}-t)}\big[\mathbf{B}^{(k)},\mathbf{H}_2\big]e^{\mathbf{B}^{(k)}(t_1-t_2)}\mathbf{B}^{(k)}\mathbf{H}_1\mathbb{1},\\
f_{i,t}^{(2)}(t_1,t_2)=& \sum_{k=1}^m \mathbf{e}_i^{\top} \mathbf{S}^{(k)}(t)e^{\mathbf{B}^{(k)}(t_1-t)}\big[\mathbf{B}^{(k)},\mathbf{H}_1\big]e^{\mathbf{B}^{(k)}(t_2-t_1)}\mathbf{B}^{(k)}\mathbf{H}_2 \mathbb{1},
\end{align*}
where the matrix operator $[A,B]=AB-BA$ defines the commutator of $A$ and $B$, whilst the singular component part $f_{i,t}^{(0)}(t_1,t_2)$ is defined by the function
\begin{align*}
f_{i,t}^{(0)}(t_1,t_1)=& \sum_{k=1}^m \mathbf{e}_i^{\top} \mathbf{S}^{(k)}(t)e^{\mathbf{B}^{(k)}(t_1-t)}\Big(\big[\mathbf{B}^{(k)},\mathbf{H}_2\big]\mathbf{H}_1 + \big[\mathbf{B}^{(k)},\mathbf{H}_1\big]\mathbf{H}_2 -\mathbf{B}^{(k)}\mathbf{H}_2\mathbf{H}_1\Big)\mathbb{1}.
\end{align*}
\end{theo}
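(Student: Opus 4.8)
The plan is to obtain the joint density by differentiating the survival function $\overline{F}_{i,t}(t_1,t_2)$ from Lemma \ref{lem:lemjointCDF} with respect to both time arguments, working separately in the two regions $\{t_1 > t_2\}$ and $\{t_2 > t_1\}$ and then reconciling the behaviour along the diagonal $\{t_1 = t_2\}$. Since the joint density is recovered as $f_{i,t}(t_1,t_2) = \partial^2 \overline{F}_{i,t}(t_1,t_2)/\partial t_1 \partial t_2$ on the interior of each region (with a sign depending on the survival-function convention), I would first restrict to $t_1 > t_2 > t$, where $\overline{F}_{i,t}(t_1,t_2) = \sum_k \mathbf{e}_i^{\top}\mathbf{S}^{(k)}(t) e^{\mathbf{B}^{(k)}(t_2-t)}\mathbf{H}_2 e^{\mathbf{B}^{(k)}(t_1-t_2)}\mathbf{H}_1\mathbb{1}$.

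First I would differentiate with respect to $t_1$: only the factor $e^{\mathbf{B}^{(k)}(t_1-t_2)}$ depends on $t_1$, producing $e^{\mathbf{B}^{(k)}(t_2-t)}\mathbf{H}_2 \mathbf{B}^{(k)} e^{\mathbf{B}^{(k)}(t_1-t_2)}\mathbf{H}_1\mathbb{1}$ (up to the overall sign carried by the survival function). Next I would differentiate with respect to $t_2$. Here the variable $t_2$ appears in two places: in $e^{\mathbf{B}^{(k)}(t_2-t)}$, whose derivative brings down a $\mathbf{B}^{(k)}$ on the left, and in $e^{\mathbf{B}^{(k)}(t_1-t_2)}$, whose derivative brings down a $-\mathbf{B}^{(k)}$ acting from the left of that block. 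Collecting the two contributions and factoring, the $\mathbf{B}^{(k)}$ terms assemble into $\mathbf{B}^{(k)}\mathbf{H}_2 - \mathbf{H}_2\mathbf{B}^{(k)} = [\mathbf{B}^{(k)},\mathbf{H}_2]$, yielding exactly $f_{i,t}^{(1)}(t_1,t_2) = \sum_k \mathbf{e}_i^{\top}\mathbf{S}^{(k)}(t) e^{\mathbf{B}^{(k)}(t_2-t)}[\mathbf{B}^{(k)},\mathbf{H}_2] e^{\mathbf{B}^{(k)}(t_1-t_2)}\mathbf{B}^{(k)}\mathbf{H}_1\mathbb{1}$. The expression for $f_{i,t}^{(2)}$ in the region $t_2 > t_1$ follows by the symmetric computation with the roles of $(\mathbf{H}_1,t_1)$ and $(\mathbf{H}_2,t_2)$ interchanged. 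The discrete atom $1 - \mathbf{e}_i^{\top}\mathbf{H}_2\mathbf{H}_1\mathbb{1}$ at $(t,t)$ is already recorded in Remark \ref{rem:probabmass}, so it carries over directly.

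The hard part will be identifying the singular component $f_{i,t}^{(0)}$ along the diagonal. I would extract it by examining the jump in the first-order partial derivative across $\{t_1 = t_2\}$: the density has a component concentrated on the line $t_1 = t_2$ whose weight equals the discontinuity of (say) $\partial \overline{F}_{i,t}/\partial t_1$ as one crosses from $t_1 > t_2$ to $t_1 < t_2$, evaluated at $t_1 = t_2 = t_1$. Concretely, I would compute $\partial_{t_1}\overline{F}_{i,t}$ in each region, take the limits as $t_2 \uparrow t_1$ and $t_2 \downarrow t_1$, and subtract; the difference, after using $e^{\mathbf{B}^{(k)}\cdot 0} = \mathbf{I}$, collapses the two exponential blocks into the single factor $e^{\mathbf{B}^{(k)}(t_1-t)}$. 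The surviving terms are the two commutator contributions $[\mathbf{B}^{(k)},\mathbf{H}_2]\mathbf{H}_1$ and $[\mathbf{B}^{(k)},\mathbf{H}_1]\mathbf{H}_2$ together with the diagonal exit term $-\mathbf{B}^{(k)}\mathbf{H}_2\mathbf{H}_1$, reproducing the stated $f_{i,t}^{(0)}(t_1,t_1)$. Throughout I would rely on the fact that $\mathbf{S}^{(k)}(t)$ and the initial conditioning are constant in $(t_1,t_2)$, so all differentiation acts purely on the matrix exponentials; the commutativity $\mathbf{H}_1\mathbf{H}_2 = \mathbf{H}_2\mathbf{H}_1$ (both diagonal) noted in Remark \ref{rem:probabmass} is what guarantees the diagonal matching is consistent from both sides.
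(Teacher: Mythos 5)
Your treatment of the absolutely continuous components is the same as the paper's: both compute the mixed partial derivative $\partial^2 \overline{F}_{i,t}/\partial t_2\,\partial t_1$ in each wedge, using $\frac{d}{ds}e^{\mathbf{B}s}=\mathbf{B}e^{\mathbf{B}s}=e^{\mathbf{B}s}\mathbf{B}$ so that the two $t_2$-contributions assemble into the commutator $[\mathbf{B}^{(k)},\mathbf{H}_2]$ (and symmetrically $[\mathbf{B}^{(k)},\mathbf{H}_1]$), and your sign bookkeeping is right since for a bivariate survival function $f=+\partial^2\overline{F}/\partial t_1\partial t_2$. Where you genuinely diverge is the singular component. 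The paper obtains $f_{i,t}^{(0)}$ by a mass-balance argument: it writes $\mathbf{e}_i^{\top}\mathbf{H}_2\mathbf{H}_1\mathbb{1}=\iint f_{i,t}^{(1)}+\iint f_{i,t}^{(2)}+\int f_{i,t}^{(0)}$, evaluates the double integrals via Fubini and $\int_0^{\infty}e^{\mathbf{B}^{(k)}s}\,ds=-[\mathbf{B}^{(k)}]^{-1}$, rewrites every constant as $-\int_t^{\infty}(\cdots)\,dt_1$, and reads off the residual integrand. You instead extract $f_{i,t}^{(0)}$ locally as the jump of $-\partial_{t_1}\overline{F}_{i,t}$ across the diagonal, i.e., the drop of $t_2\mapsto\mathbb{P}\{\tau_1\in dt_1,\,\tau_2>t_2\,\vert\,\mathcal{F}_{t,i}\}$ at $t_2=t_1$, which is exactly the diagonal mass; carrying it out, the limit from the wedge $t_1>t_2$ gives $-\sum_k\mathbf{e}_i^{\top}\mathbf{S}^{(k)}(t)e^{\mathbf{B}^{(k)}(t_1-t)}\mathbf{H}_2\mathbf{B}^{(k)}\mathbf{H}_1\mathbb{1}$ and from the wedge $t_2>t_1$ gives $-\sum_k\mathbf{e}_i^{\top}\mathbf{S}^{(k)}(t)e^{\mathbf{B}^{(k)}(t_1-t)}[\mathbf{B}^{(k)},\mathbf{H}_1]\mathbf{H}_2\mathbb{1}$, so the jump equals $\sum_k\mathbf{e}_i^{\top}\mathbf{S}^{(k)}(t)e^{\mathbf{B}^{(k)}(t_1-t)}\big([\mathbf{B}^{(k)},\mathbf{H}_1]\mathbf{H}_2-\mathbf{H}_2\mathbf{B}^{(k)}\mathbf{H}_1\big)\mathbb{1}$. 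This agrees with the theorem's three-term form, but only after the identity $[\mathbf{B}^{(k)},\mathbf{H}_2]\mathbf{H}_1-\mathbf{B}^{(k)}\mathbf{H}_2\mathbf{H}_1=-\mathbf{H}_2\mathbf{B}^{(k)}\mathbf{H}_1$, a rearrangement you should make explicit, since the jump does not literally produce the three terms you name. On the trade-off: your route is more elementary and local, needing neither Fubini nor nonsingularity of $\mathbf{B}^{(k)}$ at this step, whereas the paper's integral argument comes with a built-in normalization check that the atom, the two wedge densities and the diagonal density exhaust the total mass $\mathbf{e}_i^{\top}\mathbf{H}_2\mathbf{H}_1\mathbb{1}$ — a verification your argument inherits only implicitly from the decomposition asserted in Remark \ref{rem:probabmass}.
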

\begin{proof}
The expressions for $f_{i,t}^{(1)}(t_1,t_2)$ and $f_{i,t}^{(2)}(t_1,t_2)$ follow from taking partial derivative $\frac{\partial^2}{\partial t_2\partial t_1}\overline{F}_{i,t}(t_1,t_2)$ (see derivation of Theorem \ref{theo:maintheomph}) taking account
\begin{align}\label{eq:dervexpm}
\frac{d}{dt} \big( \mathbf{S}e^{\mathbf{B}t}\mathbf{A} \big)=\mathbf{S}\mathbf{B}e^{\mathbf{B}t}\mathbf{A}=\mathbf{S}e^{\mathbf{B}t}\mathbf{B}\mathbf{A}.
\end{align}

To get $f_{i,t}^{(0)}(t_1,t_2)$, recall that $ \int_0^{\infty}e^{\mathbf{B}t} dt = -\mathbf{B}^{-1}$, due to the phase-generator matrix $\mathbf{B}$ being negative definite (see Section II4d in \cite{Asmussen2003}). Following Remark \ref{rem:probabmass},
\begin{equation}\label{eq:total}
\begin{split}
\mathbf{e}_i^{\top}\mathbf{H}_2\mathbf{H}_1\mathbb{1}=&\int_t^{\infty} \int_t^{t_1} f_{i,t}^{(1)}(t_1,t_2) dt_2 dt_1 +
\int_t^{\infty} \int_t^{t_2} f_{i,t}^{(2)}(t_1,t_2) dt_1 dt_2 \\&+ \int_t^{\infty} f_{i,t}^{(0)}(t_1,t_1) dt_1.
\end{split}
\end{equation}

\pagebreak

\noindent Applying Fubini's theorem, the first integral is given after some calculations by
\begin{align*}
&\int_t^{\infty} \int_t^{t_1} f_{i,t}^{(1)}(t_1,t_2) dt_2 dt_1 = \int_t^{\infty} \int_{t_2}^{\infty} f_{i,t}^{(1)}(t_1,t_2) dt_1 dt_2 \\
& \hspace{1cm} = \sum_{k=1}^m \mathbf{e}_i^{\top} \mathbf{S}^{(k)}(t) \int_t^{\infty} dt_2 e^{\mathbf{B}^{(k)}(t_{2}-t)}\big[\mathbf{B}^{(k)},\mathbf{H}_2\big] \int_{t_2}^{\infty} dt_1 e^{\mathbf{B}^{(k)}(t_1-t_2)}\mathbf{B}^{(k)}\mathbf{H}_1\mathbb{1} \\
& \hspace{3cm}= \sum_{k=1}^m \mathbf{e}_i^{\top} \mathbf{S}^{(k)}(t)\big[\mathbf{B}^{(k)}\big]^{-1}\big[\mathbf{B}^{(k)},\mathbf{H}_2\big]\mathbf{H}_1\mathbb{1}\\
& \hspace{5cm} = -\int_t^{\infty} \sum_{k=1}^m \mathbf{e}_i^{\top} \mathbf{S}^{(k)}(t)e^{\mathbf{B}^{(k)}(t_1-t)}\big[\mathbf{B}^{(k)},\mathbf{H}_2\big]\mathbf{H}_1 \mathbb{1}dt_1.
\end{align*}

Following the same approach, one can show after some calculations that
\begin{align*}
\int_t^{\infty} \int_t^{t_2} f_{i,t}^{(2)}(t_1,t_2) dt_1 dt_2=&\int_t^{\infty} \int_{t_1}^{\infty} f_{i,t}^{(2)}(t_1,t_2) dt_2 dt_1\\=& \sum_{k=1}^m \mathbf{e}_i^{\top} \mathbf{S}^{(k)}(t)\big[\mathbf{B}^{(k)}\big]^{-1}\big[\mathbf{B}^{(k)},\mathbf{H}_1\big]\mathbf{H}_2 \mathbb{1}\\
=& -\int_t^{\infty} \sum_{k=1}^m \mathbf{e}_i^{\top} \mathbf{S}^{(k)}(t)e^{\mathbf{B}^{(k)}(t_1-t)}\big[\mathbf{B}^{(k)},\mathbf{H}_1\big]\mathbf{H}_2 \mathbb{1} dt_1.
\end{align*}
The proof is established on account of (\ref{eq:total}), the two identities above and
\begin{align*}
 \mathbf{e}_i^{\top}\mathbf{H}_2\mathbf{H}_1\mathbb{1}=-\int_t^{\infty} \sum_{k=1}^m \mathbf{e}_i^{\top} \mathbf{S}^{(k)}(t)e^{\mathbf{B}^{(k)}(t_1-t)}\mathbf{B}^{(k)}\mathbf{H}_2\mathbf{H}_1\mathbb{1}dt_1. \quad \exit
\end{align*}
\end{proof}
\begin{theo}
For $t\geq 0$, the $\mathcal{G}_{t}-$conditional density $f_t(t_1,t_2)$ is given by
\begin{align}\label{eq:jointpdft}
f_{t}(t_1,t_2)=
\begin{cases}
f_{t}^{(1)}(t_1,t_2), &\; \textrm{if $t_1\geq t_2 >  t \geq 0$} \\
f_{t}^{(2)}(t_1,t_2), &\; \textrm{if $t_2\geq t_1 > t \geq 0$} \\
f_{t}^{(0)}(t_1,t_2), &\; \textrm{if $t_1= t_2 >  t \geq 0$}, \\
1-\boldsymbol{\pi}^{\top}(t)\mathbf{H}_2\mathbf{H}_1\mathbb{1}, &\; \textrm{if $t_1=t_2=t$,}
\end{cases}
\end{align}
where the absolutely continuous components $f_{t}^{(1)}(t_1,t_2)$ and $f_{t}^{(2)}(t_1,t_2)$ are
\begin{align*}
f_{t}^{(1)}(t_1,t_2)=& \sum_{k=1}^m \boldsymbol{\pi}^{\top}(t) \mathbf{S}^{(k)}(t)e^{\mathbf{B}^{(k)}(t_{2}-t)}\big[\mathbf{B}^{(k)},\mathbf{H}_2\big]e^{\mathbf{B}^{(k)}(t_1-t_2)}\mathbf{B}^{(k)}\mathbf{H}_1\mathbb{1},\\
f_{t}^{(2)}(t_1,t_2)=& \sum_{k=1}^m  \boldsymbol{\pi}^{\top}(t) \mathbf{S}^{(k)}(t)e^{\mathbf{B}^{(k)}(t_1-t)}\big[\mathbf{B}^{(k)},\mathbf{H}_1\big]e^{\mathbf{B}^{(k)}(t_2-t_1)}\mathbf{B}^{(k)}\mathbf{H}_2 \mathbb{1},
\end{align*}
whilst the singular component $f_{t}^{(0)}(t_1,t_2)$ is defined by the function
\begin{align*}
f_{t}^{(0)}(t_1,t_1)=& \sum_{k=1}^m \boldsymbol{\pi}^{\top}(t) \mathbf{S}^{(k)}(t)e^{\mathbf{B}^{(k)}(t_1-t)}\Big(\big[\mathbf{B}^{(k)},\mathbf{H}_2\big]\mathbf{H}_1 + \big[\mathbf{B}^{(k)},\mathbf{H}_1\big]\mathbf{H}_2 -\mathbf{B}^{(k)}\mathbf{H}_2\mathbf{H}_1\Big)\mathbb{1}.
\end{align*}
\end{theo}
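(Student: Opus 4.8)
The plan is to reduce the entire statement to the already-proven Theorem \ref{theo:maintheo} by means of the mixing identity recorded in the preceding proposition, namely the bivariate form of (\ref{eq:relation}),
\[
\overline{F}_t(t_1,t_2)=\sum_{i\in E}\pi_i(t)\,\overline{F}_{i,t}(t_1,t_2).
\]
The key structural observation is that the Bayesian weights $\pi_i(t)$ depend on $t$ only and carry no dependence on the arguments $t_1,t_2$; hence the whole dependence of $\overline{F}_t$ on $(t_1,t_2)$ is inherited, region by region, from the $\mathcal{F}_{t,i}$-conditional survival functions of Lemma \ref{lem:lemjointCDF}. Consequently differentiation in $(t_1,t_2)$ commutes with the finite sum over $i\in E$, and each absolutely continuous piece of $f_t$ is simply the $\pi_i(t)$-weighted average of the corresponding piece of $f_{i,t}$ from Theorem \ref{theo:maintheo}.

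Concretely, first I would fix the ordering $t_1\ge t_2>t$ and apply $\partial^2/\partial t_2\partial t_1$ to the mixing identity to get $f_t^{(1)}(t_1,t_2)=\sum_{i\in E}\pi_i(t)\,f_{i,t}^{(1)}(t_1,t_2)$, and symmetrically $f_t^{(2)}$ on $t_2\ge t_1>t$. Then I would collapse the $i$-sum by the elementary identity $\sum_{i\in E}\pi_i(t)\,\mathbf{e}_i^{\top}=\boldsymbol{\pi}^{\top}(t)$, which converts each $\sum_{i\in E}\pi_i(t)\,\mathbf{e}_i^{\top}\mathbf{S}^{(k)}(t)(\cdots)\mathbb{1}$ into $\boldsymbol{\pi}^{\top}(t)\mathbf{S}^{(k)}(t)(\cdots)\mathbb{1}$, yielding verbatim the stated $f_t^{(1)}$ and $f_t^{(2)}$. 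For the atom at $(t_1,t_2)=(t,t)$ I would take the same convex combination of the masses $1-\mathbf{e}_i^{\top}\mathbf{H}_2\mathbf{H}_1\mathbb{1}$ appearing in (\ref{eq:jointpdfit}) and use the normalization $\sum_{i\in E}\pi_i(t)=1$ noted after Corollary \ref{cor:Cor1}, which produces $1-\boldsymbol{\pi}^{\top}(t)\mathbf{H}_2\mathbf{H}_1\mathbb{1}$.

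For the singular component on $\{t_1=t_2>t\}$ I would proceed exactly as in Theorem \ref{theo:maintheo}: either read it off the mixing identity as $f_t^{(0)}=\sum_{i\in E}\pi_i(t)\,f_{i,t}^{(0)}$ and collapse the $i$-sum as above, or rederive it from the total-mass balance, the analogue of (\ref{eq:total}) with $\mathbf{e}_i^{\top}$ replaced by $\boldsymbol{\pi}^{\top}(t)$, invoking Fubini's theorem together with $\int_0^{\infty} e^{\mathbf{B}^{(k)}u}\,du=-[\mathbf{B}^{(k)}]^{-1}$, valid because each $\mathbf{B}^{(k)}$ is nonsingular and negative definite. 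The main obstacle is purely a bookkeeping one: one must confirm that differentiating the $\mathcal{G}_t$-conditional survival function genuinely commutes with the sum over $i\in E$, i.e.\ that the weights $\pi_i(t)$ are constant in $(t_1,t_2)$, so that the product rule contributes no spurious terms. Once this is verified, every formula is literally the $\boldsymbol{\pi}^{\top}(t)$-weighted version of the corresponding $\mathcal{F}_{t,i}$-conditional identity, and no further analytic work is required.
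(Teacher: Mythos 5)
Your proposal is correct and follows essentially the same route as the paper, whose entire proof is the one-line observation that identity (\ref{eq:relation}) gives $f_t(t_1,t_2)=\sum_{i\in E}\pi_i(t)\,f_{i,t}(t_1,t_2)$ and then invokes Theorem \ref{theo:maintheo}. Your extra bookkeeping --- noting that the weights $\pi_i(t)$ are constant in $(t_1,t_2)$ so differentiation commutes with the finite sum, collapsing $\sum_{i\in E}\pi_i(t)\,\mathbf{e}_i^{\top}=\boldsymbol{\pi}^{\top}(t)$, and using $\sum_{i\in E}\pi_i(t)=1$ for the atom at $(t,t)$ --- merely makes explicit what the paper leaves implicit.
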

\begin{proof}
It follows from identity (\ref{eq:relation}) that $f_t(t_1,t_2)=\sum_{i\in E} \pi_i(t) f_{i,t}(t_1,t_2).$ \exit
\end{proof}

\pagebreak

\begin{cor}\label{cor:jointCDFit}
The singular component of $\overline{F}_{i,t}(t_1,t_2)$ and $\overline{F}_{t}(t_1,t_2)$ are
\begin{align*}
\overline{F}_{i,t}^{(0)}(t_1,t_1)=&
\sum_{k=1}^m \mathbf{e}_i^{\top} \mathbf{S}^{(k)}(t)e^{\mathbf{B}^{(k)}(t_1-t)}\big[\mathbf{B}^{(k)}\big]^{-1} \\
&\times\Big(\mathbf{B}^{(k)}\mathbf{H}_2\mathbf{H}_1 - \big[\mathbf{B}^{(k)},\mathbf{H}_2\big]\mathbf{H}_1 - \big[\mathbf{B}^{(k)},\mathbf{H}_1\big]\mathbf{H}_2 \Big)\mathbb{1}\\
\overline{F}_{t}^{(0)}(t_1,t_1)=&
\sum_{k=1}^m \boldsymbol{\pi}^{\top}(t) \mathbf{S}^{(k)}(t)e^{\mathbf{B}^{(k)}(t_1-t)}\big[\mathbf{B}^{(k)}\big]^{-1} \\
&\times \Big(\mathbf{B}^{(k)}\mathbf{H}_2\mathbf{H}_1 - \big[\mathbf{B}^{(k)},\mathbf{H}_2\big]\mathbf{H}_1 - \big[\mathbf{B}^{(k)},\mathbf{H}_1\big]\mathbf{H}_2 \Big)\mathbb{1}.
\end{align*}
\end{cor}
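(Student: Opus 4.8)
The plan is to recover the singular component of the survival function from the singular density $f_{i,t}^{(0)}(t_1,t_1)$ already supplied by Theorem \ref{theo:maintheo}. Because the singular part of the law of $(\tau_1,\tau_2)$ is concentrated on the diagonal $\{s_1=s_2\}$, the singular component of $\overline{F}_{i,t}$ evaluated at $(t_1,t_1)$ is the mass this singular measure places on the diagonal tail $\{s_1=s_2>t_1\}$. Thus I would start from
\[
\overline{F}_{i,t}^{(0)}(t_1,t_1)=\int_{t_1}^{\infty} f_{i,t}^{(0)}(u,u)\,du,
\]
which is exactly the conditional probability of $\{\tau_1=\tau_2>t_1\}$ given $\mathcal{F}_{t,i}$.

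First I would insert the expression for $f_{i,t}^{(0)}(u,u)$ from Theorem \ref{theo:maintheo} and move the $u$-independent factors $\mathbf{e}_i^{\top}\mathbf{S}^{(k)}(t)$ and the constant matrix $\big[\mathbf{B}^{(k)},\mathbf{H}_2\big]\mathbf{H}_1+\big[\mathbf{B}^{(k)},\mathbf{H}_1\big]\mathbf{H}_2-\mathbf{B}^{(k)}\mathbf{H}_2\mathbf{H}_1$ outside the integral, so that only $\int_{t_1}^{\infty}e^{\mathbf{B}^{(k)}(u-t)}\,du$ remains to be evaluated. The key computation is this improper matrix integral. Since each phase generator $\mathbf{B}^{(k)}$ is negative definite (Section II4d in \cite{Asmussen2003}), one has $e^{\mathbf{B}^{(k)}v}\to\mathbf{0}$ as $v\to\infty$, and a change of variable yields
\[
\int_{t_1}^{\infty}e^{\mathbf{B}^{(k)}(u-t)}\,du=-\big[\mathbf{B}^{(k)}\big]^{-1}e^{\mathbf{B}^{(k)}(t_1-t)}.
\]

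Next I would use that $\big[\mathbf{B}^{(k)}\big]^{-1}$ and $e^{\mathbf{B}^{(k)}(t_1-t)}$ commute, both being functions of $\mathbf{B}^{(k)}$, to rewrite the product as $e^{\mathbf{B}^{(k)}(t_1-t)}\big[\mathbf{B}^{(k)}\big]^{-1}$, and then absorb the leading minus sign into the bracketed matrix; this flips the two commutator terms and turns $-\mathbf{B}^{(k)}\mathbf{H}_2\mathbf{H}_1$ into $+\mathbf{B}^{(k)}\mathbf{H}_2\mathbf{H}_1$, delivering precisely the stated form of $\overline{F}_{i,t}^{(0)}(t_1,t_1)$. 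The companion identity for $\overline{F}_{t}^{(0)}(t_1,t_1)$ then follows at once from the mixing relation (\ref{eq:relation}): summing $\sum_{i\in E}\pi_i(t)\overline{F}_{i,t}^{(0)}(t_1,t_1)$ simply replaces $\mathbf{e}_i^{\top}$ by $\boldsymbol{\pi}^{\top}(t)$.

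The step I would treat most carefully, and the only genuine subtlety, is the opening identity asserting that the singular part of the survival function is exactly the diagonal tail integral of the singular density, with no contamination from the absolutely continuous pieces. This is a measure-theoretic bookkeeping point, but it is already licensed by the total-mass decomposition recorded in Remark \ref{rem:probabmass} and equation (\ref{eq:total}), which isolates the three contributions; restricting to the singular measure on $\{s_1=s_2\}$ and integrating its density over $\{u>t_1\}$ is therefore legitimate. Everything after that reduces to the routine matrix-exponential integration above.
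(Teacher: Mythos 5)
Your proposal is correct and follows exactly the route the paper intends: the corollary is the diagonal tail integral $\int_{t_1}^{\infty}f_{i,t}^{(0)}(u,u)\,du$ of the singular density from Theorem \ref{theo:maintheo}, evaluated via $\int_{t_1}^{\infty}e^{\mathbf{B}^{(k)}(u-t)}\,du=-\big[\mathbf{B}^{(k)}\big]^{-1}e^{\mathbf{B}^{(k)}(t_1-t)}$ and the mixing relation (\ref{eq:relation}) --- the same matrix-integration device already used in the proof of Theorem \ref{theo:maintheo} via (\ref{eq:total}). Your sign bookkeeping and the commutation of $\big[\mathbf{B}^{(k)}\big]^{-1}$ with $e^{\mathbf{B}^{(k)}(t_1-t)}$ are both correct, so there is nothing to repair.
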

Hence, the singular component of $\overline{F}_{it}(t_1,t_2)$ and $\overline{F}_{t}(t_1,t_2)$ is zero if and only if, for $k=1,\dots,m$, $[\mathbf{B}^{(k)}]_{i,j}=0$ for $i\in \boldsymbol{\Gamma}_1^c \cap \boldsymbol{\Gamma}_2^c$ and $j=\Delta$, which is equivalent to
\begin{equation}\label{eq:singularcond}
\begin{split}
\mathbf{B}^{(k)}\mathbf{H}_2\mathbf{H}_1 -\big[\mathbf{B}^{(k)},\mathbf{H}_2\big]\mathbf{H}_1 - \big[\mathbf{B}^{(k)},\mathbf{H}_1\big]\mathbf{H}_2=0.
\end{split}
\end{equation}
\begin{Rem}
Consider the representation (\ref{eq:intensityex}) for the matrices $\{\mathbf{B}^{(k)}\}$. It is clear following (\ref{eq:jointpdft}) that the joint probability density function $f_t(t_1,t_2)$ coincides with the bivariate phase-type distribution \cite{Assaf1984} when we set each $\mathbf{B}^{(k)}=\mathbf{B}$ and $t=0$ taking into account the fact that $[\mathbf{B},\mathbf{H}_1]\mathbf{H}_2= [\mathbf{B},\mathbf{H}_1]$ and  $[\mathbf{B},\mathbf{H}_2]\mathbf{H}_1= [\mathbf{B},\mathbf{H}_2]$.
\end{Rem}

\subsubsection{Conditional joint Laplace transform of $\tau_1$ and $\tau_2$}

In order to compute the $\mathcal{F}_{t,i}-$conditional moment $\mathbb{E}\big\{\tau_1^n\tau_2^m\big\vert \mathcal{F}_{t,i}\big\}$, it is therefore convenient to study the $\mathcal{F}_{t,i}-$conditional joint Laplace transform of $\tau_1$ and $\tau_2$:
\begin{align}\label{eq:jointMGF}
\Psi_{i,t}(\lambda_1,\lambda_2):=\mathbb{E}\big\{e^{-\lambda_1 \tau_1 - \lambda_2 \tau_2}\big\vert \mathcal{F}_{t,i}\big\} \quad \textrm{for $i\in E$}.
\end{align}

\begin{theo}
The $\mathcal{F}_{t,i}-$conditional joint Laplace transform $\Psi_{i,t}(\lambda_1,\lambda_2)$ of the first exit times $\tau_1$ and $\tau_2$ of $X$ (\ref{eq:mixture}) is given for $\lambda_1,\lambda_2\geq 0$, $t\geq 0$ and $i\in E$ by
\begin{align*}
\Psi_{i,t}(\lambda_1,\lambda_2)=&\sum_{k=1}^m \mathbf{e}_i^{\top} \mathbf{S}^{(k)}(t)\big((\lambda_1+\lambda_2)\mathbf{I}-\mathbf{B}^{(k)}\big)^{-1}
\Big( [\mathbf{B}^{(k)},\mathbf{H}_2]\big(\lambda_1\mathbf{I}-\mathbf{B}^{(k)}\big)^{-1}\mathbf{B}^{(k)}\mathbf{H}_1 \\
 & + [\mathbf{B}^{(k)},\mathbf{H}_1]\big(\lambda_2\mathbf{I}-\mathbf{B}^{(k)}\big)^{-1}\mathbf{B}^{(k)}\mathbf{H}_2  + [\mathbf{B}^{(k)},\mathbf{H}_2] \mathbf{H}_1 + [\mathbf{B}^{(k)},\mathbf{H}_1] \mathbf{H}_2 \\ & -\mathbf{B}^{(k)}\mathbf{H}_2\mathbf{H}_1     \Big) \mathbb{1}  + \big(1-\mathbf{e}_i^{\top}\mathbf{H}_2\mathbf{H}_1\mathbb{1}\big) .
\end{align*}
\end{theo}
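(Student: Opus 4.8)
The plan is to obtain $\Psi_{i,t}$ by integrating the joint density $f_{i,t}(t_1,t_2)$ of Theorem \ref{theo:maintheo} against the exponential kernel, measuring the exit times from the conditioning time $t$ exactly as in the univariate transform $\Psi_t(\lambda)=\int_0^\infty e^{-\lambda u}f_t(t+u)\,du$. Since that density splits into two absolutely continuous pieces $f_{i,t}^{(1)}$ and $f_{i,t}^{(2)}$ carried by the open triangles $\{t_1>t_2>t\}$ and $\{t_2>t_1>t\}$, a singular piece $f_{i,t}^{(0)}$ carried by the diagonal $\{t_1=t_2>t\}$, and an atom of mass $1-\mathbf{e}_i^\top\mathbf{H}_2\mathbf{H}_1\mathbb{1}$ at $(t,t)$, I would write $\Psi_{i,t}(\lambda_1,\lambda_2)$ as the sum of four contributions $I_1$, $I_2$, $I_0$ and the atom, the last of which contributes its mass directly because the exponential weight is $1$ there.

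For the first piece I would substitute $u=t_2-t$ and $v=t_1-t_2$, turning $\{t_1>t_2>t\}$ into $\{u,v>0\}$ with unit Jacobian; the exponent $-\lambda_1 t_1-\lambda_2 t_2$ then separates as $-(\lambda_1+\lambda_2)u-\lambda_1 v$. This decouples the two matrix exponentials of $f_{i,t}^{(1)}$, and each factor is evaluated by the elementary identity
$$\int_0^\infty e^{-\lambda u}e^{\mathbf{B}^{(k)}u}\,du=\big(\lambda\mathbf{I}-\mathbf{B}^{(k)}\big)^{-1},$$
valid for $\lambda\geq0$ because $\mathbf{B}^{(k)}$ is negative definite (Section II.4d of \cite{Asmussen2003}). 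This produces the term $\big((\lambda_1+\lambda_2)\mathbf{I}-\mathbf{B}^{(k)}\big)^{-1}[\mathbf{B}^{(k)},\mathbf{H}_2]\big(\lambda_1\mathbf{I}-\mathbf{B}^{(k)}\big)^{-1}\mathbf{B}^{(k)}\mathbf{H}_1$. The second piece $I_2$ is entirely symmetric under the interchange $(1,\mathbf{H}_1)\leftrightarrow(2,\mathbf{H}_2)$: the substitution $u=t_1-t$, $v=t_2-t_1$ yields the inner resolvent $\big(\lambda_2\mathbf{I}-\mathbf{B}^{(k)}\big)^{-1}$ and the trailing factor $\mathbf{H}_2$.

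The singular contribution $I_0$ is a one-dimensional integral of $f_{i,t}^{(0)}(t_1,t_1)$ along the diagonal against $e^{-(\lambda_1+\lambda_2)(t_1-t)}$; the substitution $u=t_1-t$ and the same resolvent identity, now with parameter $\lambda_1+\lambda_2$, give the left factor $\big((\lambda_1+\lambda_2)\mathbf{I}-\mathbf{B}^{(k)}\big)^{-1}$ acting on the bracket $[\mathbf{B}^{(k)},\mathbf{H}_2]\mathbf{H}_1+[\mathbf{B}^{(k)},\mathbf{H}_1]\mathbf{H}_2-\mathbf{B}^{(k)}\mathbf{H}_2\mathbf{H}_1$. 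Because $I_1$, $I_2$ and $I_0$ all share the outer factor $\big((\lambda_1+\lambda_2)\mathbf{I}-\mathbf{B}^{(k)}\big)^{-1}$, I would collect them under one bracket and append the atomic term $1-\mathbf{e}_i^\top\mathbf{H}_2\mathbf{H}_1\mathbb{1}$, which assembles the stated formula. The main care is bookkeeping: getting the two triangular regions and their changes of variables right so that the time to the first exit always pairs with $\lambda_1+\lambda_2$ while the inter-exit time pairs with the single surviving rate, and treating the measure-zero diagonal and the atom as genuinely separate parts rather than double counting them; convergence of every integral is guaranteed throughout by negative-definiteness of $\mathbf{B}^{(k)}$.
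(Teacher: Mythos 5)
Your proposal is correct and takes essentially the same route as the paper: the paper likewise decomposes $\Psi_{i,t}(\lambda_1,\lambda_2)$ into the atom $1-\mathbf{e}_i^{\top}\mathbf{H}_2\mathbf{H}_1\mathbb{1}$ at $(t,t)$, the two iterated triangular integrals of $f_{i,t}^{(1)}$ and $f_{i,t}^{(2)}$, and the diagonal integral of $f_{i,t}^{(0)}$, and then evaluates them via Fubini's theorem together with the resolvent identity $\int_0^{\infty}e^{-\lambda u}e^{\mathbf{B}^{(k)}u}\,du=(\lambda\mathbf{I}-\mathbf{B}^{(k)})^{-1}$, justified by negative definiteness of $\mathbf{B}^{(k)}$. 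Your explicit changes of variables $(u,v)=(t_2-t,\,t_1-t_2)$ and $(t_1-t,\,t_2-t_1)$ are simply the concrete form of the Fubini step the paper invokes, and you correctly adopt the paper's convention of measuring the exit times from the conditioning time $t$, as in the univariate transform.
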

\begin{proof}
Recall that for $i\in E$, $f_{i,t}(t_1,t_2)=0$ for $t_1,t_2<t$. Following Remark \ref{rem:probabmass},
\begin{align*}
\Psi_{i,t}(\lambda_1,\lambda_2)=&\big(1-\mathbf{e}_i^{\top}\mathbf{H}_2\mathbf{H}_1\mathbb{1}\big)+ \int_0^{\infty}\int_0^{\infty} e^{-\lambda_1 u_1} e^{-\lambda_2 u_2} f_{it}(t+u_1,t+u_2)du_1du_2 \\
=& \big(1-\mathbf{e}_i^{\top}\mathbf{H}_2\mathbf{H}_1\mathbb{1}\big)+ \int_0^{\infty}\int_0^{u_1} e^{-\lambda_1 u_1} e^{-\lambda_2 u_2} f_{it}^{(1)}(t+u_1,t+u_2)du_2du_1 \\
&\hspace{2.85cm}+ \int_0^{\infty}\int_0^{u_2} e^{-\lambda_1 u_1} e^{-\lambda_2 u_2} f_{it}^{(2)}(t+u_1,t+u_2)du_1du_2 \\
&\hspace{2.85cm}+ \int_0^{\infty} e^{-(\lambda_1 + \lambda_2)u_1} f_{it}^{(0)}(t+u_1,t+u_1)du_1.
\end{align*}
The proof is established by applying Fubini's theorem to double integrals. \exit
\end{proof}

\pagebreak

By the law of total probability and Bayes' rule we have the following result.
\begin{theo}
The $\mathcal{G}_{t}-$conditional joint Laplace transform $\Psi_{t}(\lambda_1,\lambda_2):=\mathbb{E}\big\{e^{-\lambda_1 \tau_1 - \lambda_2 \tau_2}\big\vert \mathcal{G}_{t}\big\}$ of the first exit times $\tau_1$ and $\tau_2$ is given for $\lambda_1,\lambda_2, t\geq 0$ by
\begin{align*}
\Psi_{t}(\lambda_1,\lambda_2)=&\sum_{k=1}^m \boldsymbol{\pi}^{\top}(t) \mathbf{S}^{(k)}(t)\big((\lambda_1+\lambda_2)\mathbf{I}-\mathbf{B}^{(k)}\big)^{-1}
\Big( [\mathbf{B}^{(k)},\mathbf{H}_2]\big(\lambda_1\mathbf{I}-\mathbf{B}^{(k)}\big)^{-1}\mathbf{B}^{(k)}\mathbf{H}_1 \\
  & + [\mathbf{B}^{(k)},\mathbf{H}_1]\big(\lambda_2\mathbf{I}-\mathbf{B}^{(k)}\big)^{-1}\mathbf{B}^{(k)}\mathbf{H}_2  + [\mathbf{B}^{(k)},\mathbf{H}_2] \mathbf{H}_1 + [\mathbf{B}^{(k)},\mathbf{H}_1] \mathbf{H}_2  \\ & -\mathbf{B}^{(k)}\mathbf{H}_2\mathbf{H}_1  \Big) \mathbb{1}  + \big(1-\boldsymbol{\pi}^{\top}(t)\mathbf{H}_2\mathbf{H}_1\mathbb{1}\big) .
\end{align*}
\end{theo}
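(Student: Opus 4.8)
The plan is to avoid recomputing the double integral from scratch and instead to exploit the same averaging mechanism that was used to pass from $\overline{F}_{i,t}$ to $\overline{F}_t$ and from $f_{i,t}$ to $f_t$. The key observation is that the information set $\mathcal{G}_t=\bigcup_{i\in E}\mathcal{F}_{t,i}$ decomposes the event that $X$ is alive at time $t$ into the disjoint events $\{X_t=i\}$, $i\in E$, whose conditional weights are exactly $\pi_i(t)=\mathbb{P}\{X_t=i\vert\mathcal{G}_t\}$ from Corollary~\ref{cor:Cor1}. Consequently the identity (\ref{eq:relation}), which was stated for the survival probability, is really an instance of the total-probability/tower decomposition $\mathbb{E}\{Y\vert\mathcal{G}_t\}=\sum_{i\in E}\pi_i(t)\,\mathbb{E}\{Y\vert\mathcal{F}_{t,i}\}$, valid for any integrable functional $Y$ of the future of the path.

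First I would apply this decomposition to the bounded random variable $Y=e^{-\lambda_1\tau_1-\lambda_2\tau_2}$. This immediately yields $\Psi_t(\lambda_1,\lambda_2)=\sum_{i\in E}\pi_i(t)\,\Psi_{i,t}(\lambda_1,\lambda_2)$, reducing the claim to the $\mathcal{F}_{t,i}-$conditional Laplace transform already established in the preceding theorem.

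Next I would substitute the explicit formula for $\Psi_{i,t}(\lambda_1,\lambda_2)$ and push the scalar weight $\pi_i(t)$ through the sum over $i$. Every matrix term of $\Psi_{i,t}$ has the form $\mathbf{e}_i^{\top}\mathbf{M}^{(k)}\mathbb{1}$ for a matrix $\mathbf{M}^{(k)}=\mathbf{S}^{(k)}(t)\big((\lambda_1+\lambda_2)\mathbf{I}-\mathbf{B}^{(k)}\big)^{-1}(\cdots)$ not depending on $i$, together with the scalar correction $1-\mathbf{e}_i^{\top}\mathbf{H}_2\mathbf{H}_1\mathbb{1}$. Using $\sum_{i\in E}\pi_i(t)\,\mathbf{e}_i^{\top}=\boldsymbol{\pi}^{\top}(t)$ collapses each matrix term to $\boldsymbol{\pi}^{\top}(t)\mathbf{M}^{(k)}\mathbb{1}$, and using $\sum_{i\in E}\pi_i(t)=1$ (Corollary~\ref{cor:Cor1}) turns the correction into $1-\boldsymbol{\pi}^{\top}(t)\mathbf{H}_2\mathbf{H}_1\mathbb{1}$, which is exactly the asserted expression.

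The only genuine point requiring care, and the one I expect to be the main obstacle, is the first step: justifying that the averaging relation holds for the conditional expectation of a path functional and not merely for a one-dimensional survival probability. This is secured by noting that $\tau_1$ and $\tau_2$ are both determined by the post-$t$ evolution of $X$, so $e^{-\lambda_1\tau_1-\lambda_2\tau_2}$ is $\sigma(X_s:s\geq t)-$measurable, while the events $\{X_t=i\}_{i\in E}$ partition $\mathcal{G}_t$; the decomposition then follows from the definition of $\pi_i(t)$ together with the tower property, exactly as in the derivation of $f_t(t_1,t_2)=\sum_{i\in E}\pi_i(t)f_{i,t}(t_1,t_2)$. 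Once this is in place the remainder is purely algebraic.
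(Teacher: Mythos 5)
Your proposal is correct and follows essentially the same route as the paper: the paper's proof is precisely the one-line invocation of the law of total probability and Bayes' rule, i.e., averaging the $\mathcal{F}_{t,i}$-conditional Laplace transform over $i\in E$ with the weights $\pi_i(t)$, exactly as you do. Your additional care in justifying the averaging identity for the path functional $e^{-\lambda_1\tau_1-\lambda_2\tau_2}$ (rather than just survival probabilities) is a sound elaboration of what the paper leaves implicit.
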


\medskip

\noindent Following the joint Laplace transform (\ref{eq:jointMGF}), we obtain the joint moments:
\begin{align*}
\mathbb{E}\big\{\tau_1^n \tau_2^m \big\vert \mathcal{G}_{t}\big\}=& (-1)^{m+n} \frac{\partial^{m+n}}{\partial \lambda_1^m \partial \lambda_2^n}\Psi_{t}(\lambda_1,\lambda_2)\Big\vert_{\lambda_1=0,\lambda_2=0}.
\end{align*}
\begin{Ex}
The conditional joint moments $\mathbb{E}\{\tau_1\tau_2\vert \mathcal{G}_{t}\}$ is given by
\begin{align*}
\mathbb{E}\{\tau_1\tau_2\vert \mathcal{G}_{t}\}=&\sum_{k=1}^m \boldsymbol{\pi}^{\top}(t) \mathbf{S}^{(k)}(t)\Big(\big[\mathbf{B}^{(k)}\big]^{-1}\mathbf{H}_1\big[\mathbf{B}^{(k)}\big]^{-1}\mathbf{H}_2 + \big[\mathbf{B}^{(k)}\big]^{-1}\mathbf{H}_2\big[\mathbf{B}^{(k)}\big]^{-1}\mathbf{H}_1\Big)\mathbb{1}.
\end{align*}
\end{Ex}

\subsection{Conditional multivariate distributions}

The extension to multivariate case follows similar approach to the bivariate one. Let $\Gamma_1,...,\Gamma_p$ be nonempty stochastically closed subsets of $\mathbb{S}$ such that $\cap_{l=1}^p \Gamma_l$ is a proper subset of $\mathbb{S}$. Without loss of generality, we assume that $\cap_{l=1}^p \Gamma_l=\Delta$. Since $\Gamma_l$ is stochastically closed, we necessarily assume that $q_{ij}^{(k)}=0$, $k=1,\dots,m$, if $i\in\Gamma_l$ and $j\in\Gamma_l^c$, for $l\in\{1,...,p\}$, and $\boldsymbol{\pi}_i\neq 0$ whenever $i\in \cap_{l=1}^p \boldsymbol{\Gamma}_l^c$.

Furthermore, denote by $\tau_k$ the first entry time of $X$ in the set $\boldsymbol{\Gamma}_k$ defined in (\ref{eq:MultiPH}). To formulate the joint distribution of $\{\tau_k\}$, let $(t_{i_1},...,t_{i_p})$ be the time ordering of $(t_1,...,t_p)\in\mathbb{R}_+^p$, where $(i_1,...,i_p)$ is a permutation of $(1,2,...,p)$. Subsequently, we define by $j_{i_k}\in \boldsymbol{\Gamma}_{i_k}^c$ the state that $X$ occupies at time $t=t_{i_k}$.

\begin{lem}\label{lem:main}
Let $t_{i_p}\geq \dots \geq t_{i_1}\geq t_{i_0}=t\geq 0$ be the time ordering of $(t_1,...,t_p)\in\mathbb{R}_+^p$. The joint distribution of the first exit times $\{\tau_k\}$ is given by
\begin{equation}\label{eq:maincdf1}
\begin{split}
\overline{F}_{j,t}(t_{i_1},...,t_{i_p})=&\mathbb{P}\big\{\tau_{i_1}>t_{i_1},...,\tau_{i_p}>t_{i_p} \big\vert \mathcal{F}_{t,j}\big\}\\=&\sum_{k=1}^m \mathbf{e}_j^{\top} \mathbf{S}^{(k)}(t)\prod_{l=1}^p e^{\mathbf{B}^{(k)}(t_{i_l}-t_{i_{l-1}})}\mathbf{H}_{i_l}\mathbb{1},\\
&\hspace{-0.3cm}\mathrm{with} \quad \sum\limits_{k=1}^m \mathbf{S}^{(k)}(t)=\mathbf{I},
\end{split}
\end{equation}
where $\mathbf{H}_{i_k}$ is an $(n\times n)-$ diagonal matrix whose $i$th element $[\mathbf{H}_{i_k}]_{i,i}=\mathbb{1}_{\{i\in\boldsymbol{\Gamma}_{i_k}^c\}}.$
\end{lem}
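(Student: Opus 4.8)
The plan is to follow the same line of argument as in the bivariate Lemma~\ref{lem:lemjointCDF}, only now tracking $p$ epochs instead of two. First I would exploit the stochastic closedness of each $\boldsymbol{\Gamma}_{i_l}$ to convert the survival events into position events. Since $\tau_{i_l}$ is the first entry time of $X$ into $\boldsymbol{\Gamma}_{i_l}$ and $X$ never leaves $\boldsymbol{\Gamma}_{i_l}$ once it enters, the event $\{\tau_{i_l}>t_{i_l}\}$ coincides with $\{X_{t_{i_l}}\in\boldsymbol{\Gamma}_{i_l}^c\}$. Hence, after time-ordering so that $t_{i_0}=t\le t_{i_1}\le\cdots\le t_{i_p}$, I would write
\[
\mathbb{P}\big\{\tau_{i_1}>t_{i_1},\dots,\tau_{i_p}>t_{i_p}\,\big\vert\,\mathcal{F}_{t,j}\big\}
=\sum_{j_{i_1}\in\boldsymbol{\Gamma}_{i_1}^c}\cdots\sum_{j_{i_p}\in\boldsymbol{\Gamma}_{i_p}^c}
\mathbb{P}\big\{X_{t_{i_1}}=j_{i_1},\dots,X_{t_{i_p}}=j_{i_p}\,\big\vert\,\mathcal{F}_{t,j}\big\}.
\]

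Next I would condition on the unobserved speed regime $\phi=k$. Applying the law of total probability and Bayes' rule exactly as in the bivariate proof, the conditioning on $\mathcal{F}_{t,j}$ produces the switching probability $s_j^{(k)}(t)$ of Proposition~\ref{prop:lem1}, while given $\{\phi=k\}$ the process $X$ evolves as the Markov chain $X^{(k)}$, so that the earlier history $\mathcal{F}_{t-}$ drops out of the forward evolution. The Markov property of $X^{(k)}$ then factors the ordered finite-dimensional distribution into a telescoping product of its transition kernels,
\[
\mathbb{P}\big\{X_{t_{i_1}}=j_{i_1},\dots,X_{t_{i_p}}=j_{i_p}\,\big\vert\,\phi=k,\,X_t=j\big\}
=\prod_{l=1}^p \mathbf{P}^{Q^{(k)}}_{j_{i_{l-1}},j_{i_l}}(t_{i_{l-1}},t_{i_l}),
\]
with $j_{i_0}=j$. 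Summing over each intermediate state $j_{i_l}\in\boldsymbol{\Gamma}_{i_l}^c$ and recognising $\sum_{j_{i_l}\in\boldsymbol{\Gamma}_{i_l}^c}\mathbf{e}_{j_{i_l}}\mathbf{e}_{j_{i_l}}^{\top}=\mathbf{H}_{i_l}$ assembles the whole expression into the matrix form $\sum_{k=1}^m \mathbf{e}_j^{\top}\widetilde{\mathbf{S}}^{(k)}(t)\prod_{l=1}^p e^{\mathbf{Q}^{(k)}(t_{i_l}-t_{i_{l-1}})}\mathbf{H}_{i_l}\mathbb{1}$.

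Finally I would pass from $\mathbf{Q}^{(k)}$ to the phase generator $\mathbf{B}^{(k)}$. Because every $\mathbf{H}_{i_l}$ projects onto the transient states and the trailing vector $\mathbb{1}$ collapses the absorbing coordinate, the block partition~(\ref{eq:blockpartisi}) lets me replace each $e^{\mathbf{Q}^{(k)}(t_{i_l}-t_{i_{l-1}})}$ by $e^{\mathbf{B}^{(k)}(t_{i_l}-t_{i_{l-1}})}$ and $\widetilde{\mathbf{S}}^{(k)}(t)$ by its transient block $\mathbf{S}^{(k)}(t)$, which yields~(\ref{eq:maincdf1}). The main obstacle is the bookkeeping of the telescoping factorization: one must verify that the product of transition matrices interlaced with the projections $\mathbf{H}_{i_l}$ reproduces precisely the ordered finite-dimensional distribution, and that restricting to $\boldsymbol{\Gamma}_{i_l}^c$ at each epoch is consistent with the stochastic closedness invoked in the first step. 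A clean way to tame this is to argue by induction on $p$, peeling off the last epoch $t_{i_p}$ and using the tower property together with the Markov property of $X^{(k)}$, which renders the recursive structure of the product transparent and reduces the inductive step to the one-epoch computation already carried out in Theorem~\ref{theo:theo1}.
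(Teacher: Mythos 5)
Your proposal is correct and follows essentially the same route as the paper's proof: converting survival events to position events via stochastic closedness, conditioning on the regime $\phi=k$ with Bayes' rule and the Markov property of $X^{(k)}$ to obtain the telescoping product interlaced with the projections $\mathbf{H}_{i_l}=\sum_{J_{i_l}\in\boldsymbol{\Gamma}_{i_l}^c}\mathbf{e}_{J_{i_l}}\mathbf{e}_{J_{i_l}}^{\top}$, and then invoking the block partition~(\ref{eq:blockpartisi}) to pass from $\mathbf{Q}^{(k)}$ to $\mathbf{B}^{(k)}$. Your closing suggestion to organize the bookkeeping by induction on $p$ is a harmless packaging variant; the paper simply carries out the $p$-step conditioning directly.
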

\begin{proof}
Following similar arguments of the proof in bivariate case, we obtain
\begin{align}
\mathbb{P}\big\{\tau_1>t_1,...,\tau_n>t_p \big \vert \mathcal{F}_{t,j}\big\}=&\mathbb{P}\big\{\tau_{i_1}>t_{i_1},...,\tau_{i_p}>t_{i_p} \big\vert \mathcal{F}_{t_{i_0},j}\big\} \nonumber \\
&\hspace{-2cm}=\mathbb{P}\big\{X_{t_{i_0}}=J_{i_0},X_{t_{i_1}}\in\boldsymbol{\Gamma}_{i_1}^c,...,X_{t_{i_p}}\in\boldsymbol{\Gamma}_{i_p}^c  \big\vert \mathcal{F}_{t_{i_0},j} \big\} \label{eq:Derivation}\\
&\hspace{-2cm}=\sum_{J_{i_1}\in\boldsymbol{\Gamma}_{i_1}^c}...\sum_{J_{i_p}\in\boldsymbol{\Gamma}_{i_p}^c} \mathbb{P}\big\{X_{t_{i_0}}=J_{i_0},X_{t_{i_1}}=J_{i_1},..., X_{t_{i_p}}=J_{i_p}  \big\vert \mathcal{F}_{t_{i_0},j} \big\}.  \nonumber
\end{align}
By Bayes' theorem for conditional probability and the law of total probability,
\begin{align*}
&\mathbb{P}\big\{X_{t_{i_0}}=J_{i_0},X_{t_{i_1}}=J_{i_1},..., X_{t_{i_p}}=J_{i_p}  \big\vert \mathcal{F}_{t_{i_0},j} \big\}\\
&\hspace{1.5cm}= \sum_{k=1}^m \mathbb{P}\big\{X_{t_{i_0}}=J_{i_0},X_{t_{i_1}}=J_{i_1},..., X_{t_{i_p}}=J_{i_p}, \phi=k  \big\vert \mathcal{F}_{t_{i_0},j} \big\}\\
&\hspace{1.5cm}= \sum_{k=1}^m \mathbb{P}\big\{X_{t_{i_0}}=J_{i_0} \big \vert \mathcal{F}_{t_{i_0},j}\big\}
\times \mathbb{P}\big\{\phi=k \big\vert X_{t_{i_0}}=J_{i_0},\mathcal{F}_{t_{i_0},j}\big\}\\
&\hspace{3cm}\times \mathbb{P}\big\{X_{t_{i_1}}=J_{i_1} \big\vert \phi=k, X_{t_{i_0}}=J_{i_0},\mathcal{F}_{t_{i_0},j} \big\} \\
&\hspace{3.2cm}\vdots \\
&\hspace{3cm}
\times \mathbb{P}\big\{X_{t_{i_p}}=J_{i_p} \big\vert \phi=k, X_{t_{i_{p-1}}}=J_{i_{p-1}},\dots,X_{t_{i_0}}=J_{i_0},\mathcal{F}_{t_{i_0},j} \big\} \\
&\hspace{1.5cm}=\sum_{k=1}^m \mathbb{1}_{\{J_{i_0}=j\}} s_{j}^{(k)}(t_{i_0})\times
\mathbf{P}_{J_{i_0},J_{i_1}}^{Q^{(k)}}(t_{i_0},t_{i_1})\times \dots \times
\mathbf{P}_{J_{i_{p-1}},J_{i_p}}^{Q^{(k)}}(t_{i_{p-1}},t_{i_p}).
 \end{align*}
Note that $\mathbb{P}\big\{X_{t_{i_0}}=J_{i_0}\vert \mathcal{F}_{t_{i_0},j}\big\}=1$ iff $J_{i_0}=j$ and $0$ otherwise. In terms of (\ref{eq:bayesianupdates}),
\begin{align*}
&\hspace{3.15cm}\mathbb{P}\big\{X_{t_{i_0}}=J_{i_0},X_{t_{i_1}}=J_{i_1},..., X_{t_{i_p}}=J_{i_p}  \big\vert \mathcal{F}_{t_{i_0},j} \big\}\\
&\hspace{0cm}=\sum_{k=1}^m \mathbf{e}_j^{\top} \widetilde{\mathbf{S}}^{(k)}(t) e^{\mathbf{Q}^{(k)}(t_{i_1}-t_{i_0})} \mathbf{e}_{J_{i_1}} \mathbf{e}_{J_{i_1}}^{\top} e^{\mathbf{Q}^{(k)}(t_{i_2}-t_{i_1})} \dots \mathbf{e}_{J_{i_{p-1}}} \mathbf{e}_{J_{i_{p-1}}}^{\top} e^{\mathbf{Q}^{(k)}(t_{i_p}-t_{i_{p-1}})} \mathbf{e}_{J_{i_p}}  \mathbf{e}_{J_{i_p}}^{\top}\mathbb{1}.
\end{align*}
Therefore, starting from equation (\ref{eq:Derivation}) we have following the above that
\begin{align*}
&\mathbb{P}\big\{\tau_{i_1}>t_{i_1},...,\tau_{i_p}>t_{i_p} \big\vert \mathcal{F}_{t_{i_0},j}\big\}= \sum_{k=1}^m \mathbf{e}_j^{\top} \widetilde{\mathbf{S}}^{(k)}(t) e^{\mathbf{Q}^{(k)}(t_{i_1}-t_{i_0})} \Big(\sum_{J_{i_1}\in\Gamma_{i_1}^c} \mathbf{e}_{J_{i_1}} \mathbf{e}_{J_{i_1}}^{\top}\Big) e^{\mathbf{Q}^{(k)}(t_{i_2}-t_{i_1})} \\
&\hspace{2cm}\dots \Big(\sum_{J_{i_{p-1}} \in \Gamma_{i_{p-1}}^c} \mathbf{e}_{J_{i_{p-1}}} \mathbf{e}_{J_{i_{p-1}}}^{\top} \Big) e^{\mathbf{Q}^{(k)}(t_{i_p}-t_{i_{p-1}})}
\Big(\sum_{J_{i_p}\in \Gamma_{i_p}^c} \mathbf{e}_{J_{i_p}}  \mathbf{e}_{J_{i_p}}^{\top} \Big)\mathbb{1},
\end{align*}
leading to $\overline{F}_{j,t}(t_{i_1},\dots,t_{i_p})$ on account of (\ref{eq:St}), the fact that $\mathbf{H}_{i_k}=\sum\limits_{J_{i_k}\in\Gamma_{i_k}^c} \mathbf{e}_{J_{i_k}}\mathbf{e}_{J_{i_k}}^{\top}$ and after applying block partition (\ref{eq:blockpartisi}) to exponential matrices $e^{\mathbf{Q}^{(k)}t}$. \exit
\end{proof}

\medskip

Notice that the conditional joint probability distribution (\ref{eq:maincdf1}) forms a non-stationary function of time $t$ with the ability to capture heterogeneity and path dependence when conditioning on all previous and current information $\mathcal{F}_{t,j}$ of the mixture process $X$. These features are removed when $\mathbf{B}^{(k)}=\mathbf{B}$, in which case, the result reduces to the multivariate phase-type distribution (\ref{eq:MPH}) for $t=0$.

\begin{prop}
Let $t_{i_p}\geq \dots \geq t_{i_1}\geq t_{i_0}=t\geq 0$ be the time ordering of $(t_1,...,t_p)\in\mathbb{R}_+^p$. The $\mathcal{G}_t-$conditional joint distribution of $\{\tau_k\}$ (\ref{eq:MultiPH}) is given by
\begin{equation}\label{eq:mainmph2}
\begin{split}
\overline{F}_{t}(t_{i_1},...,t_{i_p})=&\mathbb{P}\big\{\tau_{i_1}>t_{i_1},...,\tau_{i_p}>t_{i_p} \big\vert \mathcal{G}_{t}\big\}\\=&\sum_{k=1}^m \boldsymbol{\pi}^{\top}(t) \mathbf{S}^{(k)}(t)\prod_{l=1}^p e^{\mathbf{B}^{(k)}(t_{i_l}-t_{i_{l-1}})}\mathbf{H}_{i_l}\mathbb{1},\\
&\hspace{-0.3cm}\mathrm{with} \quad \sum\limits_{k=1}^m \mathbf{S}^{(k)}(t)=\mathbf{I},
\end{split}
\end{equation}
\end{prop}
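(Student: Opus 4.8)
The plan is to obtain the $\mathcal{G}_t$-conditional joint distribution directly from the $\mathcal{F}_{t,j}$-conditional one already established in Lemma \ref{lem:main}, exactly as was done in the bivariate case. The only tool needed is the multivariate analogue of the mixing identity (\ref{eq:relation}), namely
\[
\mathbb{P}\big\{\tau_{i_1}>t_{i_1},\dots,\tau_{i_p}>t_{i_p}\big\vert\mathcal{G}_t\big\}=\sum_{i\in E}\pi_i(t)\,\mathbb{P}\big\{\tau_{i_1}>t_{i_1},\dots,\tau_{i_p}>t_{i_p}\big\vert\mathcal{F}_{t,i}\big\}.
\]
Once this is in hand, substituting (\ref{eq:maincdf1}) and collecting the scalar weights $\pi_i(t)$ into the row vector $\boldsymbol{\pi}(t)$ produces (\ref{eq:mainmph2}) at once.

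First I would establish the mixing identity. Since $\mathcal{G}_t=\bigcup_{i\in E}\mathcal{F}_{t,i}$ and the events $\{X_t=i\}$, $i\in E$, partition $\{X_t\neq\Delta\}$, the law of total probability together with the tower property gives
\[
\mathbb{P}\{\,\cdot\,\vert\mathcal{G}_t\}=\sum_{i\in E}\mathbb{P}\{X_t=i\,\vert\mathcal{G}_t\}\,\mathbb{P}\{\,\cdot\,\vert X_t=i,\mathcal{G}_t\}.
\]
Here $\mathbb{P}\{X_t=i\,\vert\mathcal{G}_t\}=\pi_i(t)$ by Corollary \ref{cor:Cor1}, while for $i\in E$ the refined conditioning $\{X_t=i\}$ together with $\mathcal{G}_t$ coincides with $\mathcal{F}_{t,i}=\mathcal{F}_{t-}\cup\{X_t=i\}$, because $X_t=i\in E$ already forces $X_t\neq\Delta$. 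Hence the inner conditional probability is precisely $\overline{F}_{i,t}(t_{i_1},\dots,t_{i_p})$, and the mixing identity follows; this mirrors the univariate passage from (\ref{eq:PH1}) and (\ref{eq:PH0}) to (\ref{eq:relation}).

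Next I would substitute Lemma \ref{lem:main}. Inserting (\ref{eq:maincdf1}) into the mixing identity yields
\[
\overline{F}_t(t_{i_1},\dots,t_{i_p})=\sum_{i\in E}\pi_i(t)\sum_{k=1}^m \mathbf{e}_i^{\top}\mathbf{S}^{(k)}(t)\prod_{l=1}^p e^{\mathbf{B}^{(k)}(t_{i_l}-t_{i_{l-1}})}\mathbf{H}_{i_l}\mathbb{1}.
\]
Interchanging the two finite sums and using $\boldsymbol{\pi}^{\top}(t)=\sum_{i\in E}\pi_i(t)\,\mathbf{e}_i^{\top}$, which is merely the definition of the Bayesian update vector $\boldsymbol{\pi}(t)$ on $E$ furnished by Corollary \ref{cor:Cor1}, collapses the outer sum and gives exactly (\ref{eq:mainmph2}). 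Note that all matrices here act on $E$ (size $n\times n$), so the reduction is dimensionally consistent with $\boldsymbol{\pi}(t)$ being supported on $E$.

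I expect essentially no analytic obstacle: both ingredients, the alive-conditioned update $\pi_i(t)$ and the $\mathcal{F}_{t,j}$-conditional survival function, are already available, and the argument is a pure conditioning-and-collecting step. The only point deserving care is the measurability bookkeeping behind the tower decomposition, in particular the claim that further conditioning on $\{X_t=i\}$ upgrades $\mathcal{G}_t$ to the finer information $\mathcal{F}_{t,i}$ while leaving the Bayesian weights $\pi_i(t)$ intact; but this is the same manipulation already carried out in the univariate and bivariate propositions, so no genuinely new difficulty arises.
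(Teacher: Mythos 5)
Your proposal is correct and follows essentially the same route as the paper, which proves (\ref{eq:mainmph2}) in one line by applying the mixing identity (\ref{eq:relation}) to write $\overline{F}_{t}(t_{i_1},\dots,t_{i_p})=\sum_{j\in E}\pi_j(t)\,\overline{F}_{j,t}(t_{i_1},\dots,t_{i_p})$ and then invoking Lemma \ref{lem:main}. Your extra care in justifying the multivariate version of (\ref{eq:relation}) via the tower property and the partition of $\{X_t\neq\Delta\}$ by $\{X_t=i\}$, $i\in E$, is a sound elaboration of what the paper leaves implicit.
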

\begin{proof}
It follows from (\ref{eq:relation}) that $\overline{F}_{t}(t_{i_1},...,t_{i_p})=\sum\limits_{j\in E} \pi_j(t) \overline{F}_{j,t}(t_{i_1},...,t_{i_p})$. \exit
\end{proof}

\begin{cor}
Set $\mathbf{B}^{(k)}=\mathbf{B}$ and $t=0$ in (\ref{eq:mainmph2}). The distribution of $\{\tau_k\}$,
\begin{equation}\label{eq:main1}
\begin{split}
\mathbb{P}\big\{\tau_{i_1}>t_{i_1},...,\tau_{i_p}>t_{i_p})=&\boldsymbol{\pi}^{\top} \prod_{k=1}^p e^{\mathbf{B}(t_{i_k}-t_{i_{k-1}})}\mathbf{H}_{i_k} \mathbb{1},
\end{split}
\end{equation}
which coincides with the unconditional multivariate phase-type distribution \cite{Assaf1984}.
\end{cor}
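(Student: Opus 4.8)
The plan is to treat this as a direct specialization of the Proposition's identity (\ref{eq:mainmph2}): the entire argument is a substitution followed by an algebraic collapse of the mixture sum, requiring no new probabilistic input beyond what has already been established.

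First I would evaluate (\ref{eq:mainmph2}) at $t=0$. Since $\mathcal{G}_0=\mathcal{F}_{0-}$ carries no past information, the Bayesian updates revert to their initial data: $\boldsymbol{\pi}(0)=\boldsymbol{\pi}$ and $\mathbf{S}^{(k)}(0)=\mathbf{S}^{(k)}$, as recorded after (\ref{eq:bayesianupdates}) and noted in Corollary \ref{cor:Cor1}. With the ordering convention $t_{i_0}=t=0$, the identity reads
\[
\overline{F}_{0}(t_{i_1},\dots,t_{i_p})=\sum_{k=1}^m \boldsymbol{\pi}^{\top}\mathbf{S}^{(k)}\prod_{l=1}^p e^{\mathbf{B}^{(k)}(t_{i_l}-t_{i_{l-1}})}\mathbf{H}_{i_l}\mathbb{1}.
\]

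Next I would impose $\mathbf{B}^{(k)}=\mathbf{B}$ for every $k$. The key observation is that the product $\prod_{l=1}^p e^{\mathbf{B}(t_{i_l}-t_{i_{l-1}})}\mathbf{H}_{i_l}\mathbb{1}$ is then a single column vector, say $\mathbf{v}$, that no longer depends on the mixture index $k$. Linearity of matrix multiplication lets me pull both $\mathbf{v}$ and the fixed row vector $\boldsymbol{\pi}^{\top}$ outside the summation, so that $\sum_{k=1}^m \boldsymbol{\pi}^{\top}\mathbf{S}^{(k)}\mathbf{v}=\boldsymbol{\pi}^{\top}\big(\sum_{k=1}^m \mathbf{S}^{(k)}\big)\mathbf{v}$. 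Invoking the partition-of-unity property $\sum_{k=1}^m \mathbf{S}^{(k)}=\mathbf{I}$ from (\ref{eq:St}) collapses the sum to $\boldsymbol{\pi}^{\top}\mathbf{v}$, which is exactly (\ref{eq:main1}).

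Finally I would identify the outcome with (\ref{eq:MPH}): after relabeling the product index, $\boldsymbol{\pi}^{\top}\prod_{k=1}^p e^{\mathbf{B}(t_{i_k}-t_{i_{k-1}})}\mathbf{H}_{i_k}\mathbb{1}$ is precisely the multivariate phase-type survival function of Assaf et al.\ \cite{Assaf1984}, which is consistent with the earlier remark that $\mathbf{Q}^{(k)}=\mathbf{Q}$ reduces the mixture $X$ to a simple Markov jump process. There is no genuine obstacle here; the only points demanding care are the bookkeeping that the $t=0$ updates coincide with the initial data, and the recognition that commuting the $k$-independent factors through the mixture sum rests on $\sum_k \mathbf{S}^{(k)}=\mathbf{I}$ rather than on any spectral property of $\mathbf{B}$.
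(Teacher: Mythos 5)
Your proof is correct and is exactly the (implicit) argument the paper intends: the corollary is stated without proof as an immediate specialization of (\ref{eq:mainmph2}), and your substitution $t=0$, $\boldsymbol{\pi}(0)=\boldsymbol{\pi}$, $\mathbf{S}^{(k)}(0)=\mathbf{S}^{(k)}$, followed by factoring the $k$-independent vector out of the sum and invoking $\sum_{k=1}^m \mathbf{S}^{(k)}=\mathbf{I}$, is the canonical route. You also correctly identify that the collapse rests on the partition-of-unity property of the switching matrices rather than any property of $\mathbf{B}$, which is the only point of substance here.
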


The absolutely continuous component of the distribution $\overline{F}_{i,t}\big(t_{i_1},\dots,t_{i_p}\big)$ (respectively, $\overline{F}_{t}\big(t_{i_1},\dots,t_{i_p}\big)$) has a density given by the following theorem.
\begin{theo}\label{theo:maintheomph}
Let $t_{i_p}\geq \dots \geq t_{i_1} > t_{i_0}=t\geq 0$ be the time ordering of $(t_1,...,t_p)\in\mathbb{R}_+^p$. The conditional joint density function of $\{\tau_k\}$ (\ref{eq:MultiPH}) is given by
\begin{align*}
f_{i,t}\big(t_{i_1},\dots,t_{i_p}\big)=&(-1)^p \sum_{k=1}^m \mathbf{e}_i^{\top} \mathbf{S}^{(k)}(t) \prod_{l=1}^{p-1} e^{\mathbf{B}^{(k)}(t_l-t_{l-1})}[\mathbf{B}^{(k)},\mathbf{H}_{i_l}] e^{\mathbf{B}^{(k)}(t_p-t_{p-1})}\mathbf{B}^{(k)}\mathbf{H}_{i_p}\mathbb{1}, \\[8pt]
f_{t}\big(t_{i_1},\dots,t_{i_p}\big)=&(-1)^p \sum_{k=1}^m \boldsymbol{\pi}^{\top}(t) \mathbf{S}^{(k)}(t) \prod_{l=1}^{p-1} e^{\mathbf{B}^{(k)}(t_l-t_{l-1})}[\mathbf{B}^{(k)},\mathbf{H}_{i_l}] e^{\mathbf{B}^{(k)}(t_p-t_{p-1})}\mathbf{B}^{(k)}\mathbf{H}_{i_p}\mathbb{1}.
\end{align*}
\end{theo}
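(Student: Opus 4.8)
The plan is to obtain the density by differentiating the conditional survival function of Lemma~\ref{lem:main}. Fix the time ordering $t_{i_p}>\dots>t_{i_1}>t_{i_0}=t$ and work in the interior of this region, where $\overline{F}_{i,t}$ is smooth and given by the product formula (\ref{eq:maincdf1}). Because $\overline{F}_{i,t}$ is a joint survival function, its absolutely continuous density is recovered as the mixed partial derivative
\begin{equation*}
f_{i,t}(t_{i_1},\dots,t_{i_p})=(-1)^p\frac{\partial^p}{\partial t_{i_1}\cdots\partial t_{i_p}}\overline{F}_{i,t}(t_{i_1},\dots,t_{i_p}),
\end{equation*}
the factor $(-1)^p$ arising because each differentiation of a survival function strips one outer integral and contributes a minus sign. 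The entire computation then reduces to differentiating the matrix product $\prod_{l=1}^p e^{\mathbf{B}^{(k)}(t_{i_l}-t_{i_{l-1}})}\mathbf{H}_{i_l}$ slot by slot, and the restriction to the open ordering region ensures the singular component carried on the diagonals $\{t_{i_l}=t_{i_{l+1}}\}$ does not appear.

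The engine of the calculation is identity (\ref{eq:dervexpm}), namely that $\mathbf{B}^{(k)}$ commutes through its own exponential, $\mathbf{B}^{(k)}e^{\mathbf{B}^{(k)}s}=e^{\mathbf{B}^{(k)}s}\mathbf{B}^{(k)}$. First I would differentiate with respect to the largest time $t_{i_p}$, which occurs only in the final factor; this converts the tail $e^{\mathbf{B}^{(k)}(t_{i_p}-t_{i_{p-1}})}\mathbf{H}_{i_p}$ into $e^{\mathbf{B}^{(k)}(t_{i_p}-t_{i_{p-1}})}\mathbf{B}^{(k)}\mathbf{H}_{i_p}$, producing exactly the terminal factor in the statement. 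Next I would differentiate successively with respect to each intermediate time $t_{i_l}$, $l=p-1,\dots,1$. The key structural observation is that each such $t_{i_l}$ appears in precisely two adjacent exponentials, with a $+$ sign in $e^{\mathbf{B}^{(k)}(t_{i_l}-t_{i_{l-1}})}$ and a $-$ sign in $e^{\mathbf{B}^{(k)}(t_{i_{l+1}}-t_{i_l})}$; the product rule therefore yields two terms that combine, after commuting $\mathbf{B}^{(k)}$ through the neighbouring exponentials, into the commutator $e^{\mathbf{B}^{(k)}(t_{i_l}-t_{i_{l-1}})}[\mathbf{B}^{(k)},\mathbf{H}_{i_l}]e^{\mathbf{B}^{(k)}(t_{i_{l+1}}-t_{i_l})}$. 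Collecting all $p$ derivatives, every interior slot $\mathbf{H}_{i_l}$ becomes $[\mathbf{B}^{(k)},\mathbf{H}_{i_l}]$ while the terminal slot becomes $\mathbf{B}^{(k)}\mathbf{H}_{i_p}$; multiplying by $(-1)^p$ then gives $f_{i,t}(t_{i_1},\dots,t_{i_p})$ exactly as stated, and specializing to $p=2$ reproduces $f^{(1)}_{i,t}$ and $f^{(2)}_{i,t}$ of Theorem~\ref{theo:maintheo} as a sanity check.

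The $\mathcal{G}_t$-conditional density follows immediately from the mixture identity (\ref{eq:relation}), which gives $f_t(t_{i_1},\dots,t_{i_p})=\sum_{i\in E}\pi_i(t)\,f_{i,t}(t_{i_1},\dots,t_{i_p})$; since $\boldsymbol{\pi}^{\top}(t)=\sum_{i\in E}\pi_i(t)\mathbf{e}_i^{\top}$, this amounts to replacing $\mathbf{e}_i^{\top}$ by $\boldsymbol{\pi}^{\top}(t)$ throughout the formula for $f_{i,t}$.

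The main obstacle I anticipate is the sign and index bookkeeping in the successive differentiation: one must verify that the $\mathbf{B}^{(k)}$ factors introduced by earlier derivatives do not depend on the variable currently being differentiated, and that the two contributions for each intermediate $t_{i_l}$ assemble into the commutator with the correct orientation $[\mathbf{B}^{(k)},\mathbf{H}_{i_l}]=\mathbf{B}^{(k)}\mathbf{H}_{i_l}-\mathbf{H}_{i_l}\mathbf{B}^{(k)}$ rather than its negative. Once this inductive step is set up cleanly, the rest is routine given the commutation property (\ref{eq:dervexpm}) and the explicit survival form of Lemma~\ref{lem:main}.
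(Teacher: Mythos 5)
Your proposal is correct and matches the paper's own proof in all essentials: both differentiate the survival function of Lemma~\ref{lem:main} $p$ times with the factor $(-1)^p$, use the commutation identity (\ref{eq:dervexpm}) so that each intermediate slot $\mathbf{H}_{i_l}$ collapses via the product rule into $[\mathbf{B}^{(k)},\mathbf{H}_{i_l}]$ and the terminal slot into $\mathbf{B}^{(k)}\mathbf{H}_{i_p}$, and obtain the $\mathcal{G}_t$-version from the mixture identity (\ref{eq:relation}). The only difference is that you differentiate starting from the largest time $t_{i_p}$ while the paper's induction runs from $t_{i_1}$ upward, which is immaterial since the mixed partials commute on the open ordering region.
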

\begin{proof}
The proof follows from taking $p-$times partial derivative to $F_{t}\big(t_{i_1},\dots,t_{i_p}\big)$:
\begin{align*}
f_{t}\big(t_{i_1},\dots,t_{i_p}\big)=&(-1)^p \frac{\partial^p \overline{F}_{t}}{\partial t_{i_p}\dots \partial t_{i_1}}\big(t_{i_1},\dots,t_{i_p}\big).
\end{align*}
To establish the result, it is enough to show the following partial derivative holds
\begin{align*}
\frac{\partial^{p}}{\partial t_{i_{p}}\dots\partial t_{i_1}}\prod_{l=1}^p e^{\mathbf{B}^{(k)}(t_{i_l}-t_{i_{l-1}})}\mathbf{H}_{i_l} = \prod_{l=1}^{p-1} e^{\mathbf{B}^{(k)}(t_{i_l}-t_{i_{l-1}})}[\mathbf{B}^{(k)},\mathbf{H}_{i_l}] e^{\mathbf{B}^{(k)}(t_{i_p}-t_{i_{p-1}})} \mathbf{B}^{(k)} \mathbf{H}_{i_p}.
\end{align*}
To justify the claim, we use induction argument. For this purpose, recall that
\begin{equation}\label{eq:perkalian}
\begin{split}
\prod_{l=1}^p e^{\mathbf{B}^{(k)}(t_{i_l}-t_{i_{l-1}})}\mathbf{H}_{i_l}=&e^{\mathbf{B}^{(k)}(t_{i_1}-t_{i_{0}})}\mathbf{H}_{i_1}e^{\mathbf{B}^{(k)}(t_{i_2}-t_{i_{1}})}\mathbf{H}_{i_2} \\
&\times \prod_{k=3}^p e^{\mathbf{B}^{(k)}(t_{i_k}-t_{i_{k-1}})}\mathbf{H}_{i_k}.
\end{split}
\end{equation}

\noindent Hence, by (\ref{eq:dervexpm}) and applying integration by part as we did before, we have
\begin{align*}
\frac{\partial}{\partial t_{i_1}}\prod_{k=1}^p e^{\mathbf{B}^{(k)}(t_{i_k}-t_{i_{k-1}})}\mathbf{H}_{i_k}=&
e^{\mathbf{B}^{(k)}(t_{i_1}-t_{i_{0}})}[\mathbf{B}^{(k)},\mathbf{H}_{i_1}]\prod_{k=2}^p e^{\mathbf{B}^{(k)}(t_{i_k}-t_{i_{k-1}})}\mathbf{H}_{i_k},
\end{align*}
from which the second order partial derivative $\frac{\partial^2}{\partial t_{i_2}\partial t_{i_1}}$ of (\ref{eq:perkalian}) is given by
\begin{align*}
\frac{\partial^2}{\partial t_{i_2}\partial t_{i_1}}\prod_{k=1}^p e^{\mathbf{B}^{(k)}(t_{i_k}-t_{i_{k-1}})}\mathbf{H}_{i_k}=&
e^{\mathbf{B}^{(k)}(t_{i_1}-t_{i_{0}})}[\mathbf{B}^{(k)},\mathbf{H}_{i_1}]\frac{\partial}{\partial t_{i_2}} \prod_{k=2}^p e^{\mathbf{B}^{(k)}(t_{i_k}-t_{i_{k-1}})}\mathbf{H}_{i_k} \\
&\hspace{-3.5cm}=e^{\mathbf{B}^{(k)}(t_{i_1}-t_{i_{0}})}[\mathbf{B}^{(k)},\mathbf{H}_{i_1}] e^{\mathbf{B}^{(k)}(t_{i_2}-t_{i_{1}})}[\mathbf{B}^{(k)},\mathbf{H}_{i_2}] \prod_{k=3}^p e^{\mathbf{B}^{(k)}(t_{i_k}-t_{i_{k-1}})}\mathbf{H}_{i_k}.
\end{align*}
After $(p-1)$steps of taking the partial derivative, one can show that
\begin{align*}
\frac{\partial^{p-1}}{\partial t_{i_{p-1}}\dots\partial t_{i_1}}\prod_{k=1}^p e^{\mathbf{B}^{(k)}(t_{i_k}-t_{i_{k-1}})}\mathbf{H}_{i_k} = \prod_{k=1}^{p-1} e^{\mathbf{B}^{(k)}(t_{i_k}-t_{i_{k-1}})}[\mathbf{B}^{(k)},\mathbf{H}_{i_k}] e^{\mathbf{B}^{(k)}(t_{i_p}-t_{i_{p-1}})} \mathbf{H}_{i_p}.
\end{align*}

The claim is established on account of (\ref{eq:dervexpm}) and the fact that
\begin{align*}
\frac{\partial^p \overline{F}_{t}}{\partial t_{i_p}\dots \partial t_{i_1}} (t_{i_1},\dots,t_{i_p})=&
\sum_{k=1}^m \boldsymbol{\pi}^{\top}(t) \mathbf{S}^{(k)}(t)\frac{\partial^p}{\partial t_{i_p}\dots \partial t_{i_1}} \prod_{l=1}^p e^{\mathbf{B}^{(k)}(t_{i_l}-t_{i_{l-1}})}\mathbf{H}_{i_l} \mathbb{1}. \exit
\end{align*}
\end{proof}

\noindent However, due to complexity of the joint distributions, the singular component of $\overline{F}_{i,t}(t_{i_1},\dots,t_{i_p})$ (resp. $\overline{F}_{t}(t_{i_1},\dots,t_{i_p})$) is more complicated to get in closed form.

\medskip

Following (\ref{eq:maincdf1}) and (\ref{eq:mainmph2}), we see that the distributions are uniquely characterized by the Bayesian update on the probability $\boldsymbol{\pi}$ of starting the process $X$ in any of the $(n+1)$ phases, the speeds of the process represented by the phase-generator matrices $\{\mathbf{B}^{(k)}\}$, and by the Bayesian update of switching probability matrix $\mathbf{S}^{(k)}$. The initial profile of the distributions form a generalized mixture of the multivariate phase-type distributions \cite{Assaf1984}. Unlike the latter, the distributions have non-stationary and path dependence property when conditioning on the available information (either full or partial) of $X$, which is non-Markov. When the process never repeatedly changes the speed, i.e., $\mathbf{B}^{(k)}=\mathbf{B}$, all these properties are removed and the initial distributions reduce to \cite{Assaf1984}. As in the univariate case, the multivariate distributions have closure and dense properties, which can be established in similar ways to the univariate analogs using matrix analytic approach \cite{Assaf1984}. We refer among others to \cite{Neuts1981}, \cite{Assaf1982}, \cite{He} and \cite{Rolski} for Markov model, and to \cite{Surya2018} for the mixture model. As a result, we have the following theorem.

\begin{theo}[Closure and dense properties]
The conditional multivariate probability distribution (\ref{eq:mainmph2}) forms a dense class of distributions on $\mathbb{R}_+^p$, which is closed under finite convex mixtures and finite convolutions.
\end{theo}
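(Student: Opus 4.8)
The plan is to observe first that, for any fixed conditioning time $t\geq 0$, each summand in (\ref{eq:mainmph2}) is itself a multivariate phase-type distribution of the Assaf et al. type. Indeed, since $\mathbf{S}^{(k)}(t)$ is diagonal and $\sum_{k=1}^m \mathbf{S}^{(k)}(t)=\mathbf{I}$ with $\boldsymbol{\pi}^{\top}(t)\mathbb{1}=1$, the weights $w_k:=\boldsymbol{\pi}^{\top}(t)\mathbf{S}^{(k)}(t)\mathbb{1}$ are nonnegative and sum to one, and the normalized row vector $\boldsymbol{\alpha}^{(k)\top}:=w_k^{-1}\boldsymbol{\pi}^{\top}(t)\mathbf{S}^{(k)}(t)$ is a genuine probability vector on $E$. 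With the phase generator $\mathbf{B}^{(k)}$ and the diagonal indicator matrices $\{\mathbf{H}_{i_l}\}$ held fixed, the $k$th term of (\ref{eq:mainmph2}) is exactly the MPH distribution (\ref{eq:MPH}) with representation $(\boldsymbol{\alpha}^{(k)},\mathbf{B}^{(k)})$. Thus $\overline{F}_t$ is a finite convex combination $\sum_{k=1}^m w_k\,\overline{F}^{(k)}$ of MPH distributions, and all three assertions reduce to classical statements about the MPH class established by matrix-analytic means.

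For the density claim I would note that the family described by (\ref{eq:mainmph2}) contains the entire Assaf et al. MPH family as the special case $m=1$, $t=0$, where $\mathbf{S}^{(1)}(0)=\mathbf{I}$ and $\boldsymbol{\pi}(0)=\boldsymbol{\pi}$, recovering (\ref{eq:main1}). Since the MPH family is already dense in the distributions on $\mathbb{R}_+^p$, any family containing it is dense a fortiori, so density follows with no further work.

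For closure under finite convex mixtures, a convex combination of finitely many distributions of the form (\ref{eq:mainmph2}) is, by the first paragraph, again a finite convex combination of MPH distributions; the standard construction that places the individual phase generators on the diagonal of a larger block-diagonal generator and concatenates the suitably scaled initial vectors then exhibits the mixture in the required form. Closure under finite convolution is handled by the analogous concatenation: the convolution of two MPH laws is realized on the direct sum of the two phase spaces by running the first phase process to absorption and feeding its exit law as the initial distribution of the second, with the indicator matrices $\{\mathbf{H}_{i_l}\}$ extended blockwise so that each $\boldsymbol{\Gamma}_{i_l}$ remains stochastically closed and $\cap_l\boldsymbol{\Gamma}_{i_l}$ still reduces to the single absorbing state. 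Applying this componentwise to the mixture representation and collecting terms yields a law of the same type.

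I expect the convolution closure to be the main obstacle, specifically the bookkeeping required to extend the diagonal indicators $\mathbf{H}_{i_l}$ to the enlarged phase space while preserving both the stochastic closedness of the sets $\boldsymbol{\Gamma}_{i_l}$ and the single common absorbing state, and simultaneously tracking the mixing weights so that the convolved object remains a genuine finite convex combination. The univariate analogue for $\overline{F}_t(s)$ recorded earlier already carries the essential idea, so the argument here is its multivariate elaboration, following Neuts and Assaf et al.
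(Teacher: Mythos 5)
Your opening reduction is correct and is in fact more explicit than anything the paper records: since $\mathbf{S}^{(k)}(t)$ is diagonal with entries in $[0,1]$ and $\sum_{k=1}^m\mathbf{S}^{(k)}(t)=\mathbf{I}$, the weights $w_k=\boldsymbol{\pi}^{\top}(t)\mathbf{S}^{(k)}(t)\mathbb{1}$ are nonnegative and sum to one, and (discarding indices with $w_k=0$) each summand of (\ref{eq:mainmph2}) is an MPH law of \cite{Assaf1984} with representation $(\boldsymbol{\alpha}^{(k)},\mathbf{B}^{(k)})$; conversely your block-diagonal embedding (big state space equal to the disjoint union of the component phase spaces with the absorbing states identified, $\mathbf{S}^{(k)}$ the indicator of the $k$th block, $\boldsymbol{\pi}$ carrying mass $c_k\boldsymbol{\alpha}^{(k)}$ on block $k$) shows that \emph{every} finite convex combination of MPH laws is of the form (\ref{eq:mainmph2}). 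So the class in the theorem is exactly the class of finite convex mixtures of MPH distributions, and both density (a fortiori, since the case $m=1$, $t=0$ recovers (\ref{eq:main1})) and closure under finite convex mixtures follow. The paper itself offers no argument beyond deferring to the matrix-analytic literature and the univariate analogues (Theorems 4.12 and 4.13 of \cite{Surya2018}), so on these two claims you have supplied strictly more detail than the source.

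The convolution step, however, contains a genuine error. In the multivariate setting the convolution is the law of the componentwise sum $(\tau_1+\sigma_1,\dots,\tau_p+\sigma_p)$ of two independent MPH vectors, and your absorb-then-restart concatenation does not realize it: if you extend $\boldsymbol{\Gamma}_{i_l}$ blockwise to include the first chain's copy, the concatenated chain enters the extended set already at time $\tau_{i_l}$; if you exclude the first block, it enters at $\tau_{\Delta}+\sigma_{i_l}$, where $\tau_{\Delta}$ is the first chain's full absorption time. Neither equals $\tau_{i_l}+\sigma_{i_l}$. The difficulty is not, as you anticipated, bookkeeping of the $\mathbf{H}_{i_l}$'s, but that the $p$ clocks of the first vector stop at different random times while all components of the second vector are measured from the second chain's single time origin; a sequential construction cannot reconcile this. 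Repairing it requires an enlarged state space recording which of the sets $\boldsymbol{\Gamma}_{i_l}$ the first chain has already entered, with the two chains suitably coupled --- this is precisely the nontrivial content of Berdel and Hipp \cite{Berdel} (cited in the paper's introduction), who prove that the Assaf class is closed under convolution. With that result in hand your scheme closes: the convolution of two finite mixtures of MPH laws is the finite mixture of the pairwise convolutions, each pairwise convolution is MPH by \cite{Berdel}, and your first paragraph returns the result to the form (\ref{eq:mainmph2}).
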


\pagebreak
\section{Some explicit and numerical examples}
This section discusses some explicit examples of the main results presented in Section \ref{sec:mainsection}, particularly on the bivariate distributions. Using the closed form density functions (\ref{eq:jointpdfit}) and (\ref{eq:jointpdft}), we discuss the mixtures of exponential distributions, Marshall-Olkin exponential distributions, and their generalization.

\begin{Ex}[Mixture of exponential distributions]

Consider the mixture process $X$ (\ref{eq:mixture}) defined on the state space $\mathbb{S}=\{1,2,3\}\cup \{\Delta\}$ with stochastically closed sets $\boldsymbol{\Gamma}_1=\{2,\Delta\}$ and $\boldsymbol{\Gamma}_2=\{3,\Delta\}$. Assume that the speed of the mixture process is represented by the following phase generator matrices:
\begin{equation*}
\mathbf{B}^{(2)} = \left(\begin{array}{ccc}
  -(b_1+b_2) & b_1 & b_2 \\
  0 & -b_2 & 0 \\
  0 & 0 & -b_1
\end{array}\right)
\;\; \mathrm{and} \;\;
\mathbf{B}^{(1)} = \left(\begin{array}{ccc}
  -(a_1+a_2) & a_1 & a_2 \\
  0 & -a_2 & 0 \\
  0 & 0 & -a_1
\end{array}\right).
\end{equation*}

It is straightforward to derive from the state space representation that
\begin{align*}
\mathbf{H}_1= \left(\begin{array}{ccc}
 1 & 0 & 0 \\
  0 & 0 & 0 \\
   0 & 0  & 1
\end{array}\right)
\quad \textrm{and} \quad
\mathbf{H}_2= \left(\begin{array}{ccc}
 1 & 0 & 0 \\
  0 & 1 & 0 \\
   0 & 0  & 0
\end{array}\right).
\end{align*}
After some calculations, the matrices $[\mathbf{B}^{(1)},\mathbf{H}_k]$ and $\mathbf{B}^{(1)}\mathbf{H}_k$, $k=1,2$, are
\begin{align*}
[\mathbf{B}^{(1)},\mathbf{H}_1]=
\left(\begin{array}{ccc}
 0 & -a_1 & 0 \\
  0 & 0 & 0 \\
   0 & 0  & 0
\end{array}\right)
\quad \textrm{and}& \quad
[\mathbf{B}^{(1)},\mathbf{H}_2]=
\left(\begin{array}{ccc}
 0 & 0 & -a_2 \\
  0 & 0 & 0 \\
   0 & 0  & 0
\end{array}\right) \\[8pt]
\mathbf{B}^{(1)}\mathbf{H}_1=
\left(\begin{array}{ccc}
 -(a_1+a_2) & 0 & a_2 \\
  0 & 0 & 0 \\
   0 & 0  & -a_1
\end{array}\right)
\quad \textrm{and}& \quad
\mathbf{B}^{(1)}\mathbf{H}_2=
\left(\begin{array}{ccc}
 -(a_1+a_2) & a_1 & 0 \\
  0 & -a_2 & 0 \\
   0 & 0  & 0
\end{array}\right).
\end{align*}
Similarly defined for $[\mathbf{B}^{(2)},\mathbf{H}_k]$ and $\mathbf{B}^{(2)}\mathbf{H}_k$, for $k=1,2$.
Set the matrix $\mathbf{S}=\textrm{diag}(p_1,p_2,p_3)$, with $0<p_k<1$, for $k=1,2,3$, whilst the initial probability $\boldsymbol{\pi}$ has mass one on the state $1$, i.e., $\boldsymbol{\pi}=\mathbf{e}_1$. It is straightforward to check that the condition (\ref{eq:singularcond}) is clearly satisfied implying that the joint density function (\ref{eq:jointpdfit}) has zero singular component. Hence, following (\ref{eq:jointpdfit}) we have for $t_1,t_2\geq 0$
\begin{align*}
f_{\tau_1,\tau_2}(t_1,t_2)=p_1 b_1 e^{-b_1 t_1}b_2 e^{-b_2 t_2} + (1-p_1)a_1 e^{-a_1 t_1}a_2 e^{-a_2 t_2},
\end{align*}

The marginal distribution of $\tau_1$ and $\tau_2$ are given respectively by
\begin{align*}
f_{\tau_1}(t_1)=&p_1 b_1 e^{-b_1 t_1} + (1-p_1)a_1 e^{-a_1 t_1}\\
f_{\tau_2}(t_2)=&p_1 b_2 e^{-b_2 t_2} + (1-p_1) a_2 e^{-a_2 t_2}.
\end{align*}

Hence, clearly, as $f_{\tau_1,\tau_2}(t_1,t_2)\neq f_{\tau_1}(t_1)f_{\tau_2}(t_2)$, it follows that the exit times $\tau_1$ and $\tau_2$ are not independent under the mixture model. They are independent if and only if $a_1=b_1=b_2=a_2$, in which case the mixture corresponds to a simple Markov jump process. See the example on p. 691 in \cite{Assaf1984} and p. 59 in \cite{He}.

Furthermore, when conditioning on the information set $\mathcal{F}_{t,i}$ with $i=1$, the conditional joint density function $f_{1,t}(t_1,t_2)$ is given for $t_1,t_2\geq t\geq 0$  by
\begin{equation}\label{eq:ex1}
\begin{split}
f_{1,t}(t_1,t_2)=&s_1(t) e^{(b_1+b_2)t} b_1 e^{-b_1 t_1} b_2 e^{-b_2t_2} \\
&+ (1-s_1(t))  e^{(a_1+a_2)t} a_1 e^{-a_1 t_1} a_2 e^{-a_2t_2},
\end{split}
\end{equation}
where the switching probability $s_1(t)$ is defined for $\mathcal{F}_{t-}=\emptyset$ and $t\geq 0$ by
\begin{align*}
s_1(t)=\frac{p_1e^{-(b_1+b_2)t}}{p_1e^{-(b_1+b_2)t} + (1-p_1)e^{-(a_1+a_2)t}}.
\end{align*}
Observe that, on the event $\{\textrm{min}\{\tau_1,\tau_2\}>t\}$, one can check that $s_1(t)\rightarrow 0$ (resp. $1$) as $t\rightarrow \infty$ if $b_1+b_2 > \; (\textrm{resp. $<$})\; a_1+a_2$, implying that the mixture $X$ moves as a Markov process at the slow speed $\mathbf{B}^{(1)}$ (resp. $\mathbf{B}^{(2)}$) in the long run.

Given that $\Gamma_1^c\cap\Gamma_2^c=\{1\}$, we have $\pi_1(t)=1$ for all $t\geq 0$. Hence, the density function $f_t(t_1,t_2)$ (\ref{eq:jointpdft}) has therefore the same expression as (\ref{eq:ex1}).
\end{Ex}

\begin{Ex}[Mixture of Marshall-Olkin distributions]

Consider the mixture process $X$ (\ref{eq:mixture}) with the same state space $\mathbb{S}$ and stochastically closed sets $\Gamma_1$ and $\Gamma_2$ as defined above. Let the speed of the mixture process be given by
\begin{align*}
\mathbf{B}^{(2)} =& \left(\begin{array}{ccc}
  -(b_1+b_2+b_3) & b_1 & b_2 \\
  0 & -(b_2+b_3) & 0 \\
  0 & 0 & -(b_1+b_3)
\end{array}\right)\\[8pt]
\mathbf{B}^{(1)} =& \left(\begin{array}{ccc}
  -(a_1+a_2+a_3) & a_1 & a_2 \\
  0 & -(a_2+a_3) & 0 \\
  0 & 0 & -(a_1+a_3)
\end{array}\right).
\end{align*}
Set the matrix $\mathbf{S}=\textrm{diag}(p_1,p_2,p_3)$, with $0<p_k<1$, for $k=1,2,3$, while the initial distribution has mass one on the state $1$, i.e., $\pi_1=1$. Following (\ref{eq:singularcond}), the joint density $f_{1,t}(t_1,t_2)$ has singular part on the set $\{(t_1,t_2):t_2=t_1\}$.By Theorem \ref{theo:maintheo} and Corollary \ref{cor:jointCDFit}, the absolutely continuous parts are given by
\begin{align*}
f_{1,t}^{(1)}(t_1,t_2)=&s_1(t)b_2(b_1+b_3)e^{-b_1(t_1-t)} e^{-b_2(t_2-t)} e^{-b_3(t_1-t_2)}
\\&+ \big(1-s_1(t)\big)a_2(a_1+a_3)e^{-a_1(t_1-t)} e^{-a_2(t_2-t)} e^{-a_3(t_1-t_2)}\\[8pt]
f_{1,t}^{(2)}(t_1,t_2)=&s_1(t)b_1(b_2+b_3)e^{-b_1(t_1-t)}e^{-b_2(t_2-t)} e^{-b_3(t_2-t_1)}\\
&+ \big(1-s_1(t)\big)a_1(a_2+a_3)e^{-a_1(t_1-t)}e^{-a_2(t_2-t)} e^{-a_3(t_2-t_1)},
\end{align*}
whereas the singular component $f_{t}^{(0)}(t_1,t_2)$ is given by the function:
\begin{align*}
f_{1,t}^{(0)}(t_1,t_1)=&s_1(t) b_3 e^{-(b_1+b_2+b_3)(t_1-t)}
+ \big(1-s_1(t) \big) a_3 e^{-(a_1+a_2+a_3)(t_1-t)}.
\end{align*}
Note that the switching probability $s_1(t)$ is given for $\mathcal{F}_{t-}=\emptyset$ and $t\geq 0$ by
\begin{align*}
s_1(t)=\frac{p_1e^{-(b_1+b_2+b_3)t}}{p_1e^{-(b_1+b_2+b_3)t} + (1-p_1)e^{-(a_1+a_2+a_3)t}}.
\end{align*}
\end{Ex}

\subsection{General explicit identity}

In order to take advantage of the structure of the generator matrices, let
\begin{equation}\label{eq:intensityex}
\mathbf{B}^{(2)} = \left(\begin{array}{ccc}
 \mathbf{B}_{11} & \mathbf{B}_{12} & \mathbf{B}_{13} \\
  \mathbf{0} & \mathbf{B}_{22} & \mathbf{0} \\
   \mathbf{0} & \mathbf{0}  & \mathbf{B}_{33}\
\end{array}\right)
\quad \textrm{and} \quad
\mathbf{B}^{(1)} = \left(\begin{array}{ccc}
 \mathbf{A}_{11} & \mathbf{A}_{12} & \mathbf{A}_{13} \\
  \mathbf{0} & \mathbf{A}_{22} & \mathbf{0} \\
   \mathbf{0} & \mathbf{0}  & \mathbf{A}_{33}
\end{array}\right).
\end{equation}
The generator matrices $\mathbf{B}^{(1)}$ and $\mathbf{B}^{(2)}$ are nonsingular if and only if $\mathbf{A}_{11}$, $\mathbf{A}_{22}$, $\mathbf{A}_{33}$, $\mathbf{B}_{11}$, $\mathbf{B}_{22}$ and $\mathbf{B}_{33}$ are all nonsingular. The matrices $\mathbf{H}_1$ and $\mathbf{H}_2$ are
\begin{align*}
\mathbf{H}_1= \left(\begin{array}{ccc}
 \mathbf{I} & \mathbf{0} & \mathbf{0} \\
  \mathbf{0} & \mathbf{0} & \mathbf{0} \\
   \mathbf{0} & \mathbf{0}  & \mathbf{I}
\end{array}\right)
\quad \textrm{and} \quad
\mathbf{H}_2= \left(\begin{array}{ccc}
 \mathbf{I} & \mathbf{0} & \mathbf{0} \\
  \mathbf{0} & \mathbf{I} & \mathbf{0} \\
   \mathbf{0} & \mathbf{0}  & \mathbf{0}
\end{array}\right).
\end{align*}
After some calculations the matrix $[\mathbf{B}^{(1)},\mathbf{H}_k]$ and $\mathbf{B}^{(1)}\mathbf{H}_k$, $k=1,2$, are given by
\begin{align*}
[\mathbf{B}^{(1)},\mathbf{H}_1]=&
\left(\begin{array}{ccc}
 \mathbf{0} & -\mathbf{A}_{12} & \mathbf{0} \\
  \mathbf{0} & \mathbf{0} & \mathbf{0} \\
   \mathbf{0} & \mathbf{0}  & \mathbf{0}
\end{array}\right)
\quad \textrm{and} \quad
[\mathbf{B}^{(1)},\mathbf{H}_2]=
\left(\begin{array}{ccc}
 \mathbf{0} & \mathbf{0} & -\mathbf{A}_{13} \\
  \mathbf{0} & \mathbf{0} & \mathbf{0} \\
   \mathbf{0} & \mathbf{0}  & \mathbf{0}
\end{array}\right) \\[8pt]
\mathbf{B}^{(1)}\mathbf{H}_1=&
\left(\begin{array}{ccc}
 \mathbf{A}_{11} & \mathbf{0} & \mathbf{A}_{13} \\
  \mathbf{0} & \mathbf{0} & \mathbf{0} \\
   \mathbf{0} & \mathbf{0}  & \mathbf{A}_{33}
\end{array}\right)
\quad \textrm{and} \quad
\mathbf{B}^{(1)}\mathbf{H}_2=
\left(\begin{array}{ccc}
 \mathbf{A}_{11} & \mathbf{A}_{12} & \mathbf{0} \\
  \mathbf{0} & \mathbf{A}_{22} & \mathbf{0} \\
   \mathbf{0} & \mathbf{0}  & \mathbf{0}
\end{array}\right).
\end{align*}
Similarly defined for $[\mathbf{B},\mathbf{H}_k]$ and $\mathbf{B}\mathbf{H}_k$, for $k=1,2$. A rather long calculations using infinite series representation of exponential matrix shows following (\ref{eq:jointpdfit}),
\begin{eqnarray} \label{eq:jointpdfitex}
f_{i,t}(t_1,t_2)=
\begin{cases}
f_{i,t}^{(1)}(t_1,t_2), &\; \textrm{if $t_1\geq t_2 > t \geq 0$} \\[7pt]
f_{i,t}^{(2)}(t_1,t_2), &\; \textrm{if $t_2\geq t_1 > t \geq 0$}\\[7pt]
f_{i,t}^{(0)}(t_1,t_1), &\; \textrm{if $t_1= t_2 > t \geq 0$}, \\[7pt]
1-\mathbf{e}_i^{\top}\mathbf{H}_2\mathbf{H}_1\mathbb{1},
&\; \textrm{if $t_1= t_2=t\geq 0$},
\end{cases}
\end{eqnarray}
for $i\in \Gamma_1^c \cap \Gamma_2^c$, with the absolutely continuous parts $f_{i,t}^{(1)}(t_1,t_2)$ and $f_{i,t}^{(2)}(t_1,t_2)$:
 \begin{align*}
 f_{i,t}^{(1)}(t_1,t_2)=&-\mathbf{e}_i^{\top}\Big\{\mathbf{S}_{11}(t)e^{\mathbf{B}_{11}(t_2-t)}\mathbf{B}_{13}e^{\mathbf{B}_{33}(t_1-t_2)}\mathbf{B}_{33} \\
 &\hspace{2cm}+\big[\mathbf{I}-\mathbf{S}_{11}(t)\big]e^{\mathbf{A}_{11}(t_2-t)}\mathbf{A}_{13}e^{\mathbf{A}_{33}(t_1-t_2)}\mathbf{A}_{33}\Big\}\mathbb{1},\\[8pt]
f_{i,t}^{(2)}(t_1,t_2)=&-\mathbf{e}_i^{\top}\Big\{\mathbf{S}_{11}(t)e^{\mathbf{B}_{11}(t_1-t)}\mathbf{B}_{12}e^{\mathbf{B}_{22}(t_2-t_1)}\mathbf{B}_{22} \\
 &\hspace{2cm}+\big[\mathbf{I}-\mathbf{S}_{11}(t)\big]e^{\mathbf{A}_{11}(t_1-t)}\mathbf{A}_{12}e^{\mathbf{A}_{22}(t_2-t_1)}\mathbf{A}_{22}\Big\}\mathbb{1},
 \end{align*}
 whereas the singular component $f_{i,t}^{(0)}(t_1,t_1)$ is defined by the function
 \begin{align*}
  f_{i,t}^{(0)}(t_1,t_1)=&-\mathbf{e}_i^{\top}\Big\{\mathbf{S}_{11}(t) e^{\mathbf{B}_{11}(t_1-t)}\big(\mathbf{B}_{11} \mathbb{1}+ \mathbf{B}_{12} \mathbb{1}+ \mathbf{B}_{13}\mathbb{1}\big)\\
 &\hspace{2cm}+\big[\mathbf{I}-\mathbf{S}_{11}(t)\big]e^{\mathbf{A}_{11}(t_1-t)}\big(\mathbf{A}_{11}\mathbb{1} + \mathbf{A}_{12}\mathbb{1} + \mathbf{A}_{13}\mathbb{1}\big)\Big\}.
 \end{align*}

 \pagebreak

\noindent Note that $\mathbf{S}_{11}(t)$ denotes the switching probability matrix of $X$ on $\Gamma_1^c\cap \Gamma_2^c$.

It is straightforward to see that the absolutely continuous parts of $f_{i,t}(t_1,t_2)$ vanishes when $\mathbf{A}_{12}=\mathbf{B}_{12}=\mathbf{0}$ and $\mathbf{A}_{13}=\mathbf{B}_{13}=\mathbf{0}$, in which case $\Gamma_1$ and $\Gamma_2$ are non overlapping. Moreover, $f_{i,t}(t_1,t_2)$ has no singular component $f_{i,t}^{(0)}(t_1,t_2)$ iff
 \begin{align*}
 \mathbf{A}_{11}\mathbb{1}+\mathbf{A}_{12}\mathbb{1}+\mathbf{A}_{13}\mathbb{1}=\mathbf{0}=\mathbf{B}_{11}\mathbb{1}+\mathbf{B}_{12}\mathbb{1}+\mathbf{B}_{13}\mathbb{1}.
 \end{align*}

\noindent Denote by $\boldsymbol{\alpha}$ and $\boldsymbol{\gamma}$ the restriction of the probability $\boldsymbol{\pi}$ on  the set $\Gamma_1^c\cap \Gamma_2^c$ and $E\backslash\{ \Gamma_1^c\cap \Gamma_2^c\}$, respectively, s.t. $\boldsymbol{\pi}=\big(\boldsymbol{\alpha},\boldsymbol{\gamma}\big)$. The Bayesian updates $\boldsymbol{\pi}(t)$ on $\Gamma_1^c\cap \Gamma_2^c$ is defined by $\boldsymbol{\alpha}(t)$. The conditional density $f_{t}(t_1,t_2)$ of $\tau_1$ and $\tau_2$ is given by
\begin{eqnarray} \label{eq:jointpdfitex2}
f_{t}(t_1,t_2)=
\begin{cases}
f_{t}^{(1)}(t_1,t_2), &\; \textrm{if $t_1\geq t_2 >  t \geq 0$} \\[7pt]
f_{t}^{(2)}(t_1,t_2), &\; \textrm{if $t_2\geq t_1 >  t \geq 0$}\\[7pt]
f_{t}^{(0)}(t_1,t_1), &\; \textrm{if $t_1= t_2 >  t \geq 0$}, \\[7pt]
1-\boldsymbol{\alpha}^{\top}(t)\mathbb{1},
&\; \textrm{if $t_1= t_2=t\geq 0$}
\end{cases}
\end{eqnarray}
where the subdensity functions $f_{t}^{(1)}(t_1,t_2)$, $f_{t}^{(2)}(t_1,t_2)$ and $f_{t}^{(0)}(t_1,t_2)$ are
 \begin{align*}
 f_{t}^{(1)}(t_1,t_2)=&-\boldsymbol{\alpha}^{\top}(t)\Big\{\mathbf{S}_{11}(t)e^{\mathbf{B}_{11}(t_2-t)}\mathbf{B}_{13}e^{\mathbf{B}_{33}(t_1-t_2)}\mathbf{B}_{33} \\
 &\hspace{2cm}+\big[\mathbf{I}-\mathbf{S}_{11}(t)\big]e^{\mathbf{A}_{11}(t_2-t)}\mathbf{A}_{13}e^{\mathbf{A}_{33}(t_1-t_2)}\mathbf{A}_{33}\Big\}\mathbb{1},\\[8pt]
f_{t}^{(2)}(t_1,t_2)=&-\boldsymbol{\alpha}^{\top}(t)\Big\{\mathbf{S}_{11}(t)e^{\mathbf{B}_{11}(t_1-t)}\mathbf{B}_{12}e^{\mathbf{B}_{22}(t_2-t_1)}\mathbf{B}_{22} \\
 &\hspace{2cm}+\big[\mathbf{I}-\mathbf{S}_{11}(t)\big]e^{\mathbf{A}_{11}(t_1-t)}\mathbf{A}_{12}e^{\mathbf{A}_{22}(t_2-t_1)}\mathbf{A}_{22}\Big\}\mathbb{1}, \\[8pt]
 f_{t}^{(0)}(t_1,t_1)=&-\boldsymbol{\alpha}^{\top}(t)\Big\{\mathbf{S}_{11}(t) e^{\mathbf{B}_{11}(t_1-t)}\big(\mathbf{B}_{11}\mathbb{1} + \mathbf{B}_{12}\mathbb{1} + \mathbf{B}_{13}\mathbb{1}\big)\\
 &\hspace{2cm}+\big[\mathbf{I}-\mathbf{S}_{11}(t)\big]e^{\mathbf{A}_{11}(t_1-t)}\big(\mathbf{A}_{11} \mathbb{1}+ \mathbf{A}_{12}\mathbb{1} + \mathbf{A}_{13}\mathbb{1}\big)\Big\}.
 \end{align*}

The marginal probability density functions $f_{\tau_k}^{(i)}(s\vert t):=-  \partial_s \mathbb{P}\{\tau_k>s \big\vert \mathcal{F}_{t,i}\}$ and $f_{\tau_k}(s\vert t):=-  \partial_s \mathbb{P}\{\tau_k>s \big\vert \mathcal{G}_t\}$ of $\tau_k$, $k=1,2$, can be deduced from $\overline{F}_{i,t}(t_1,t_2)$ and $\overline{F}_{t}(t_1,t_2)$. They are given for $s\geq t\geq 0$ and $i\in \Gamma_k^c$ by the following:
 \begin{eqnarray}
f_{\tau_k}^{(i)}(s\vert t)&=& - \mathbf{e}_i^{\top} \Big(\mathbf{S}(t) e^{\mathbf{B}_k(s-t)} \mathbf{B}_k+ \big[\mathbf{I}-\mathbf{S}(t)\big] e^{\mathbf{A}_k(s-t)} \mathbf{A}_k  \Big)\mathbb{1},\\[8pt] \label{eq:marginal1}
f_{\tau_k}(s\vert t)&=& - \boldsymbol{\pi}^{\top}(t) \Big(\mathbf{S}(t) e^{\mathbf{B}_k(s-t)} \mathbf{B}_k + \big[\mathbf{I}-\mathbf{S}(t)\big] e^{\mathbf{A}_k(s-t)} \mathbf{A}_k  \Big)\mathbb{1}, \label{eq:marginal2}
  \end{eqnarray}
where the phase-generator matrices $\mathbf{B}_k$ and $\mathbf{A}_k$, for $k=1,2$, are defined by
  \begin{align*}
\mathbf{B}_1=  \left(\begin{array}{cc}
 \mathbf{B}_{11} & \mathbf{B}_{13} \\
  \mathbf{0} &  \mathbf{B}_{33}
\end{array}\right)
\quad \textrm{and}& \quad
\mathbf{A}_1=  \left(\begin{array}{cc}
 \mathbf{A}_{11} & \mathbf{A}_{13} \\
  \mathbf{0} &  \mathbf{A}_{33}
\end{array}\right), \\[8pt]
\mathbf{B}_2=  \left(\begin{array}{cc}
 \mathbf{B}_{11} & \mathbf{B}_{12} \\
  \mathbf{0} &  \mathbf{B}_{22}
\end{array}\right)
\quad \textrm{and}& \quad
\mathbf{A}_2=  \left(\begin{array}{cc}
 \mathbf{A}_{11} & \mathbf{A}_{12} \\
  \mathbf{0} &  \mathbf{A}_{22}
\end{array}\right).
\end{align*}

In the next section we discuss some numerical examples of the main results presented in Section \ref{sec:mainsection}, in particular on the conditional bivariate distributions, taking the advantage of the structure of phase-generator matrices given in (\ref{eq:intensityex}).

\subsection{Numerical examples}

\begin{figure}
\begin{center}
  \begin{tikzpicture}[font=\sffamily]

        \tikzset{node style/.style={state,
                                    minimum width=1.5cm,
                                    line width=1mm,
                                    fill=gray!20!white}}

        \node[node style] at (2, 0)     (s1)  {$J_1$};
        \node[node style] at (5, 0)     (s2)  {$\dots$};

        \node[node style] at (8, 0)     (s3)  {$J_n$};
        \node[node style] at (10, -2)  (s4)  {$J_{n+2}$};

        \node[node style] at (10, 2)   (s5)  {$J_{n+1}$};

        \node[node style] at (12,0)    (s6)  {$\Delta$};

        \draw[every loop,
              auto=right,
              line width=1mm,
              >=latex,
              draw=orange,
              fill=orange]

            (s1)  edge[bend right=20, auto=right]  node {$\beta_1$} (s2)
            (s2)  edge[bend right=20, auto=right]  node {$\alpha_1$}  (s1)

          (s2)  edge[bend right=20, auto=right]    node {$\beta_{n-1}$} (s3)
            (s3)  edge[bend right=20, auto=right]  node {$\alpha_{n-1}$}  (s2)

             (s3) edge[auto=left] node {$\gamma_1$} (s5)
             (s4) edge[auto=right] node {$\delta_2$} (s6)
             (s5) edge[auto=left] node {$\delta_1$} (s6)
              (s3) edge[auto=left] node {$\delta_3$} (s6)
             (s3) edge[auto=right] node {$\gamma_2$} (s4);

 \end{tikzpicture}
 \caption{State diagram of birth-death process.}\label{fig:birthdeathp}
\end{center}
\end{figure}
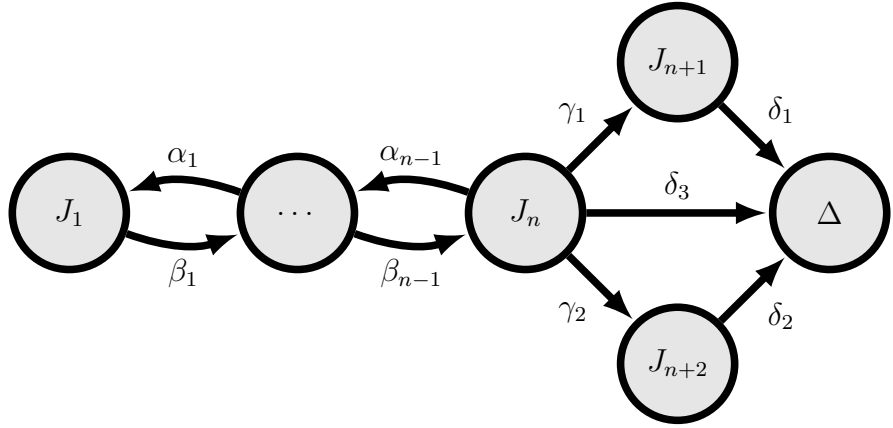

Consider a mixture of birth-death processes with state diagram described in Figure \ref{fig:birthdeathp}. The birth-death process has been widely used in many places such as, among others, in queueing theory, performance engineering, see \cite{Bolch}, demography, epidemiology and biology \cite{Nowak}. For simplicity, we set $\mathbb{S}=\{1,2,3,4,5\}\cup\{\Delta\}$ with $\Gamma_1=\{4,\Delta\}$ and $\Gamma_2=\{5,\Delta\}$. The intensity matrix $\mathbf{Q}^{(1)}$ of $X^{(1)}$ is given by
  \begin{align*}
\mathbf{Q}^{(1)}=  \left(\begin{array}{cccccc}
 -\beta_1 & \beta_1 & 0 & 0 & 0 & 0 \\
  \alpha_1 & -(\beta_2+\alpha_1) & \beta_2 & 0 & 0 & 0 \\
  0 & \alpha_2 & - (\alpha_2 + \gamma_1 + \gamma_2 + \delta_3)  &\gamma_1 & \gamma_2 & \delta_3 \\
  0 & 0 & 0 & -\delta_1 & 0 & \delta_1 \\
  0 & 0 & 0 & 0 & -\delta_2 & \delta_2\\
  0 & 0 &0 &0 &0 &0
\end{array}\right),
\end{align*}
while the intensity matrix of $X^{(2)}$ is defined by $\mathbf{Q}^{(2)}=\boldsymbol{\Psi}\mathbf{Q}^{(1)}$. Following \cite{Frydman2008} and \cite{Surya2018} we choose $\boldsymbol{\Psi}=\psi\mathbf{I}$, with $\psi\geq0$, whilst the initial switching probability matrix is defined by $\mathbf{S}=0.5\mathbf{I}$. For numerical purposes, we set $\beta_1=\beta_2=2$, $\alpha_1=\alpha_2=0.5$, $\gamma_1=\gamma_2=1$ and $\delta_i=1$, $i=1,2,3$. The initial probability $\boldsymbol{\pi}$ of starting the process $X$ at any of the $5$ states is given by $\boldsymbol{\pi}=(0.6,0.3,0.1,0,0)^{\top}$.

Numerical results on getting various shapes of conditional bivariate density functions (\ref{eq:jointpdfitex}) and (\ref{eq:jointpdfitex2}) as function of time $t$ are presented in Figures 3 - 6.

\begin{figure}[ht!]
\centering
\begin{tabular}{cc}
\subf{\includegraphics[width=72.5mm]{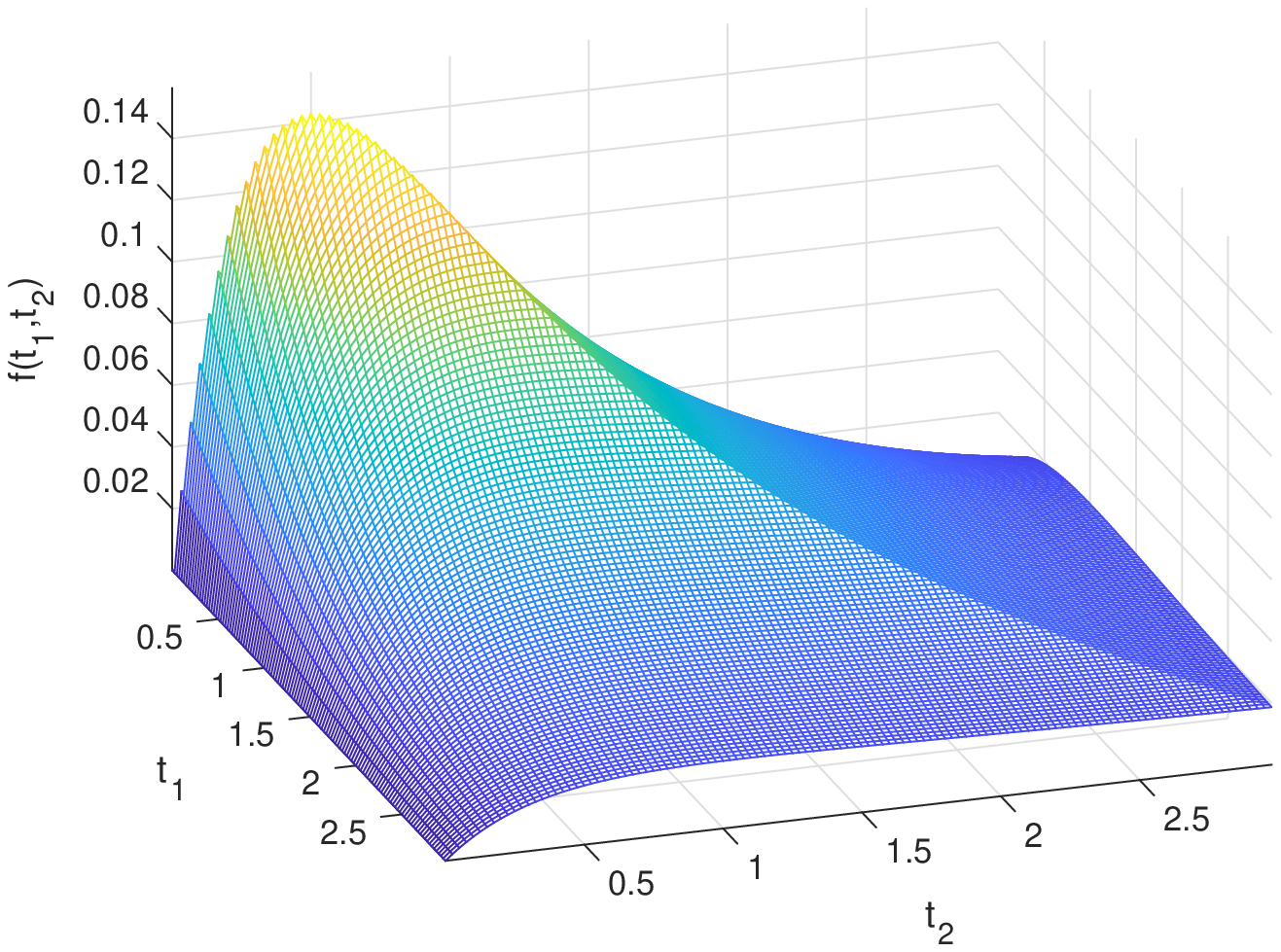}}
       {$f_{i,t}(t_1,t_2)$ with $i=2$, $\psi=0.5$, $t=0$.}

&
\subf{\includegraphics[width=72.5mm]{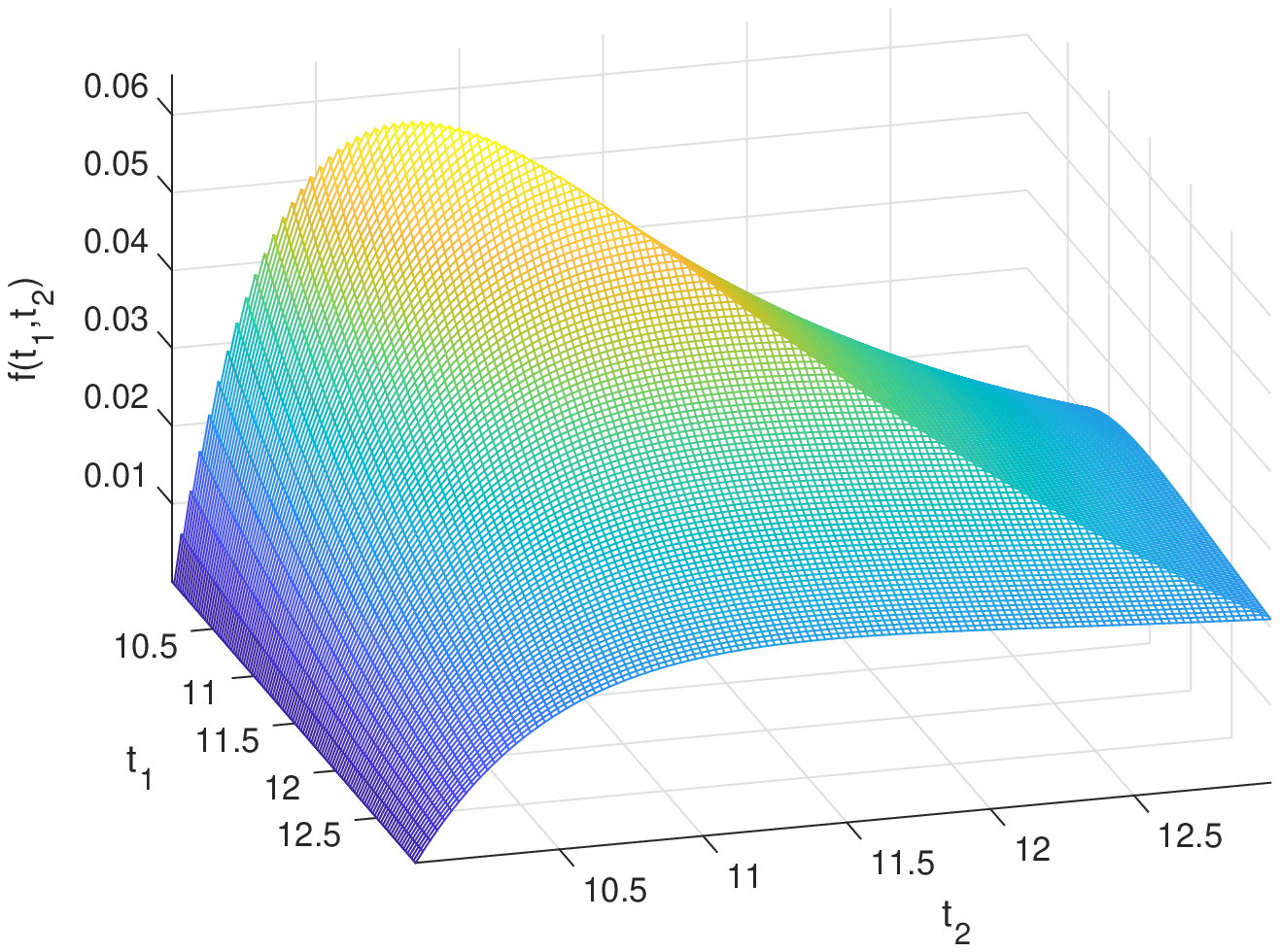}}
     {$f_{i,t}(t_1,t_2)$ with $i=2$, $\psi=0.5$, $t=10$.}
\\
\subf{\includegraphics[width=72.5mm]{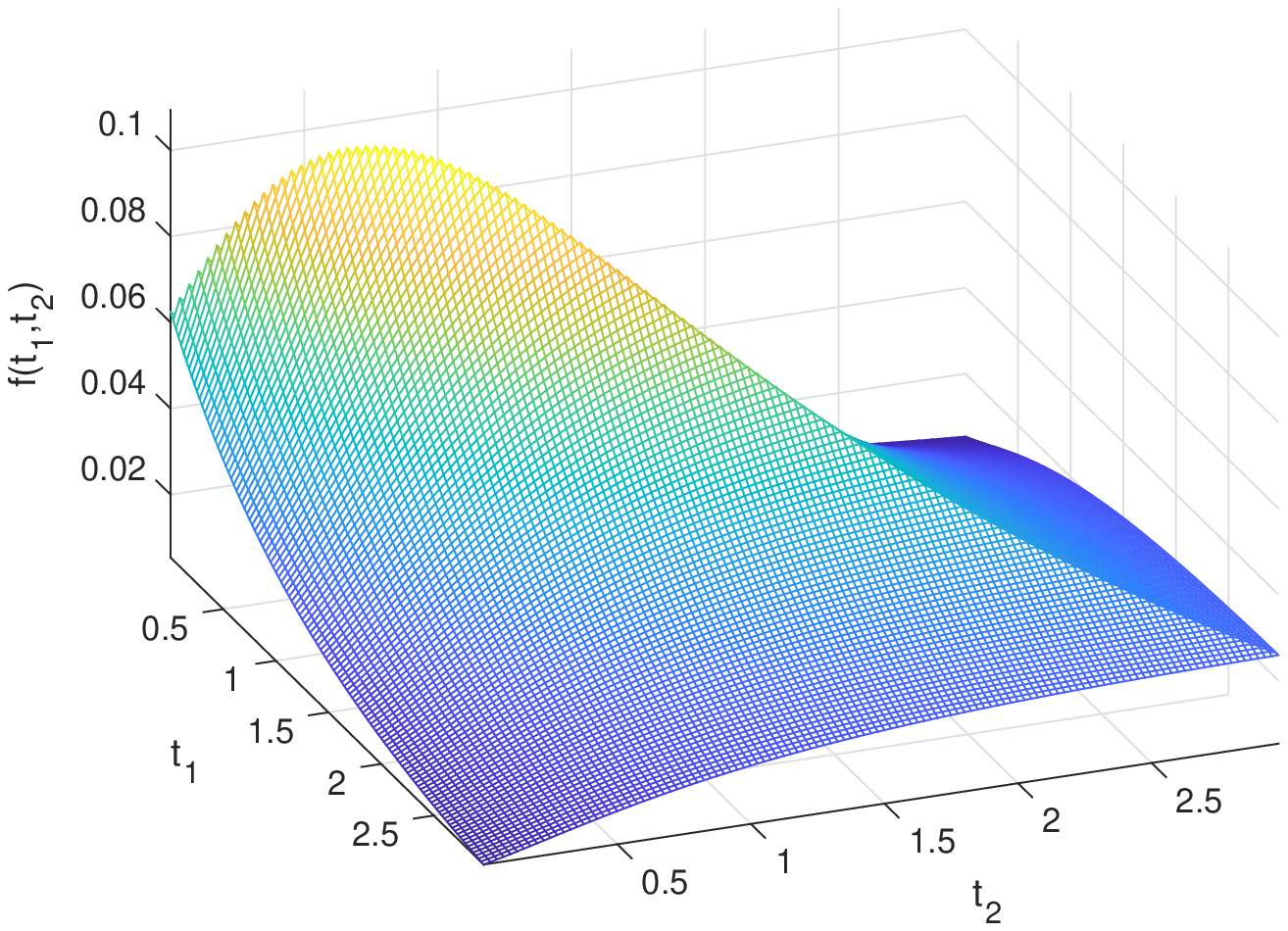}}
       {$f_{t}(t_1,t_2)$ with $\psi=0.5$, $t=0$.}
&
\subf{\includegraphics[width=72.5mm]{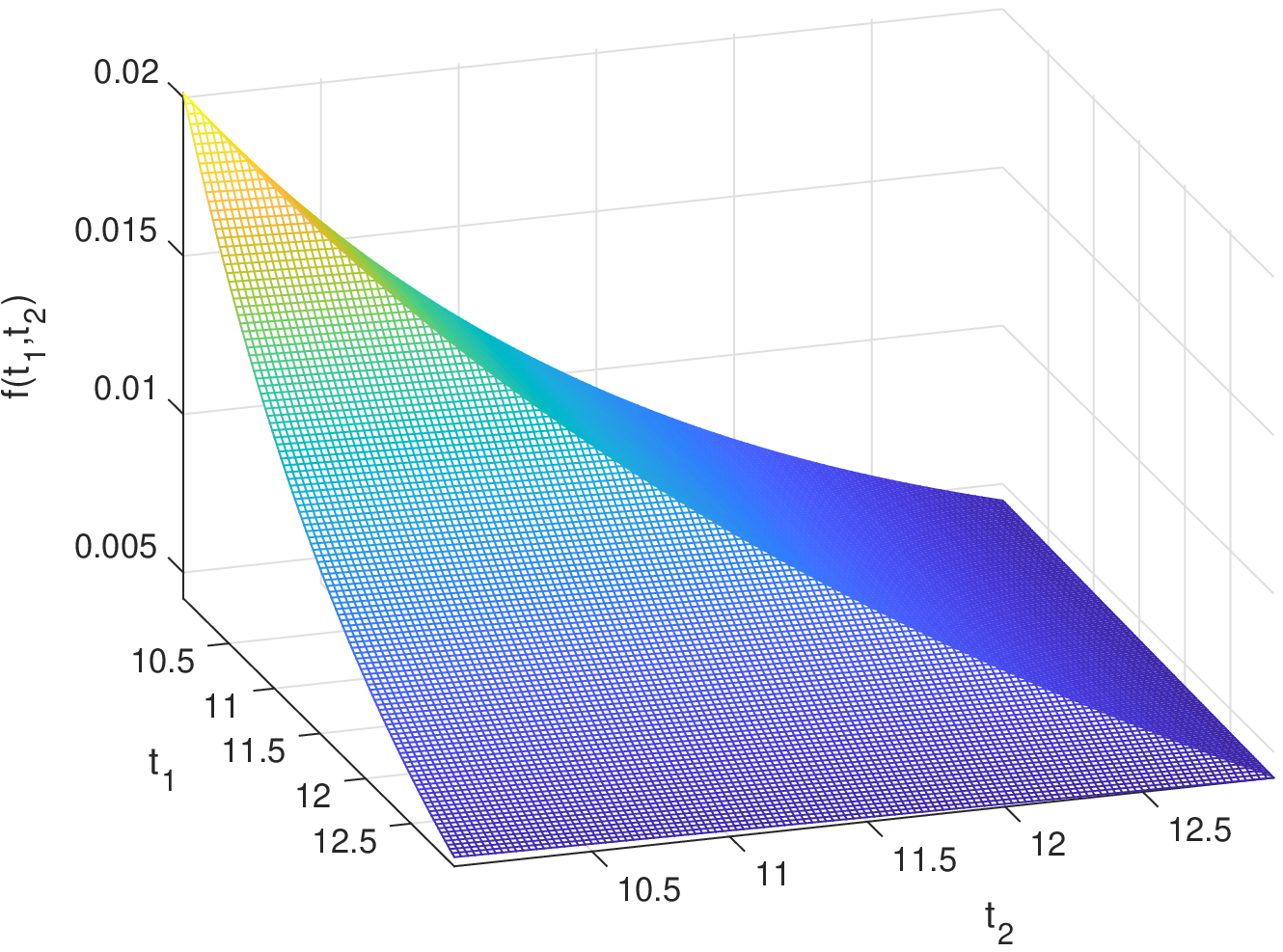}}
     {$f_{t}(t_1,t_2)$ with $\psi=0.5$, $t=10$.}
\\
\end{tabular}
\caption{ Plots of $f_{i,t}(t_1,t_2)$ (\ref{eq:jointpdfitex}) and $f_{t}(t_1,t_2)$ (\ref{eq:jointpdfitex2}) and their stationary shapes.}\label{fig:birthdeathgam}
\end{figure}

The shape of the density functions $f_{i,t}(t_1,t_2)$ (\ref{eq:jointpdfitex}) and $f_t(t_1,t_2)$ (\ref{eq:jointpdfitex2}) are displayed in Figure 3. The first plot in the top pictures exhibits the initial shape of $f_{i,t}(t_1,t_2)$ when $X_t$ starts in state $i=2$ at time zero, whereas the second plot presents the shape of stationary probability density function of $\tau_1$ and $\tau_2$ given that the process starts in the same state $i=2$ at time $t=10$. The picture clearly shows that the function has zero value at initial time and left skewed. The two pictures below which represent the function $f_t(t_1,t_2)$ (\ref{eq:jointpdfitex2}) when the process starts at a random initial state in $E$ at time $t=0$ and $t=10$, respectively. The probability of starting the process at any given time $t\geq 0$ is given by $\boldsymbol{\pi}(t)$. Note that we have used $\psi=0.5$, by which $X^{(1)}$ moves two times faster than $X^{(2)}$ does. The function has nonzero value at initial time. However, unlike the two pictures above which, the function losses its hump shape in the long run. We can see this more detailed in Figure 6 in terms of the marginal density function of $\tau_1$ and $\tau_2.$.

We observe that the joint density function $f_{i,t}(t_1,t_2)$ changes its shape as time $t$ increases, a feature that lacks in the Markov model $(\psi=1)$. Given that $\mathbf{S}=0.5\mathbf{I}$, the initial profile of density function $f_t(t_1,t_2)$ (for $t=0$) forms a mixture of bivariate phase-type distributions $f_{\boldsymbol{\pi},\mathbf{B}^{(1)}}(t_1,t_2)$ and $f_{\boldsymbol{\pi},\mathbf{B}^{(2)}}(t_1,t_2)$, i.e., $f_t(t_1,t_2)=0.5f_{\boldsymbol{\pi},\mathbf{B}^{(1)}}(t_1,t_2) + 0.5f_{\boldsymbol{\pi},\mathbf{B}^{(2)}}(t_1,t_2)$, where $f_{\boldsymbol{\pi},\mathbf{B}^{(k)}}(t_1,t_2)$, $k=1,2$, is obtained by setting $\mathbf{B}^{(1)}=\mathbf{B}^{(2)} $ in (\ref{eq:jointpdfitex2}), see e.g. \cite{Assaf1984}. In contrary to \cite{Assaf1984}, the distribution $f_t(t_1,t_2)$ changes its shape as $t$ increases, as depicted in Figure 3. 

\begin{figure}[ht!]
\centering
\begin{tabular}{cc}
\subf{\includegraphics[width=72.5mm]{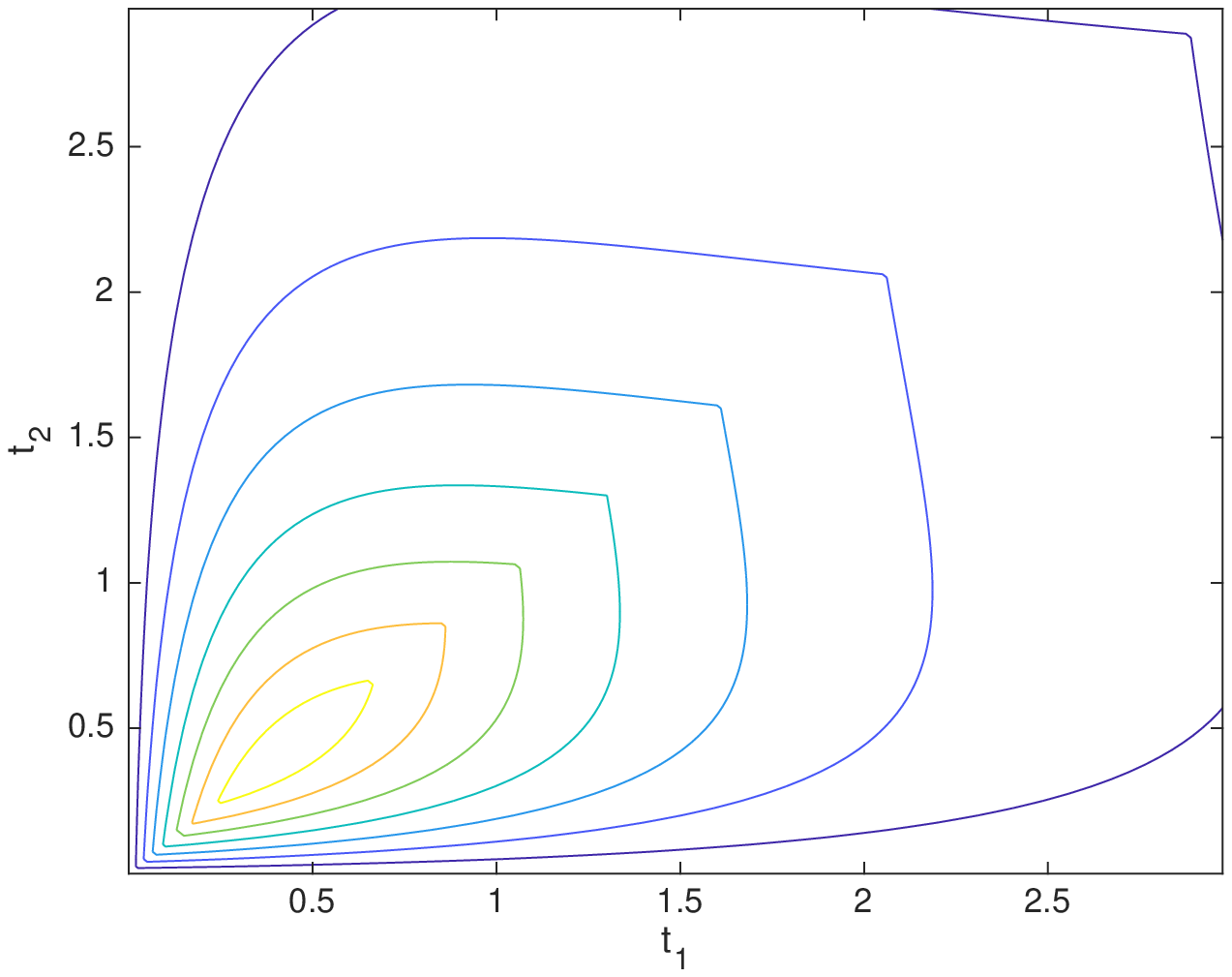}}
       {$f_{i,t}(t_1,t_2)$ with $i=2$, $t=0$.}

&
\subf{\includegraphics[width=72.5mm]{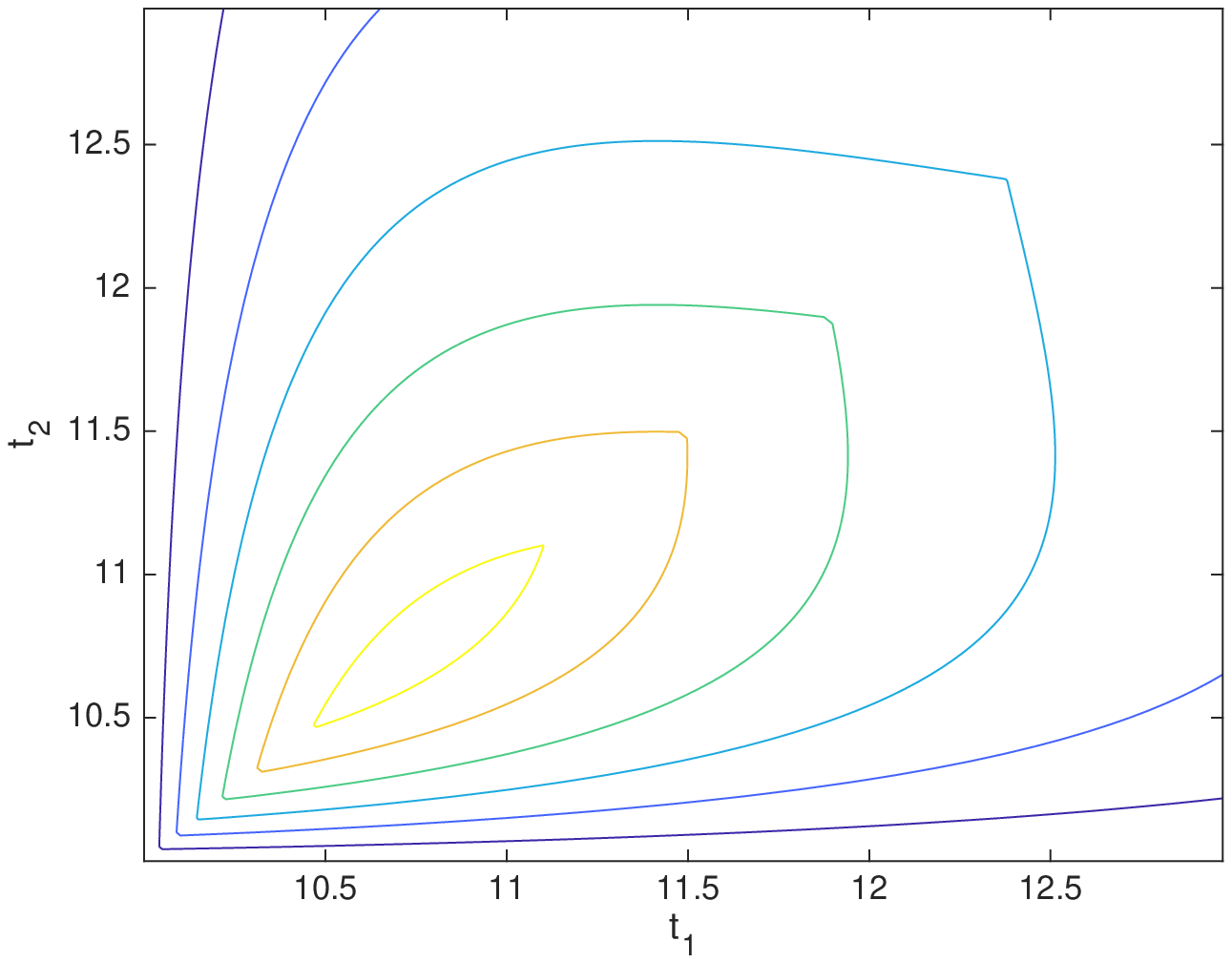}}
     {$f_{i,t}(t_1,t_2)$ with $i=2$, $t=10$.}
\\
\subf{\includegraphics[width=72.5mm]{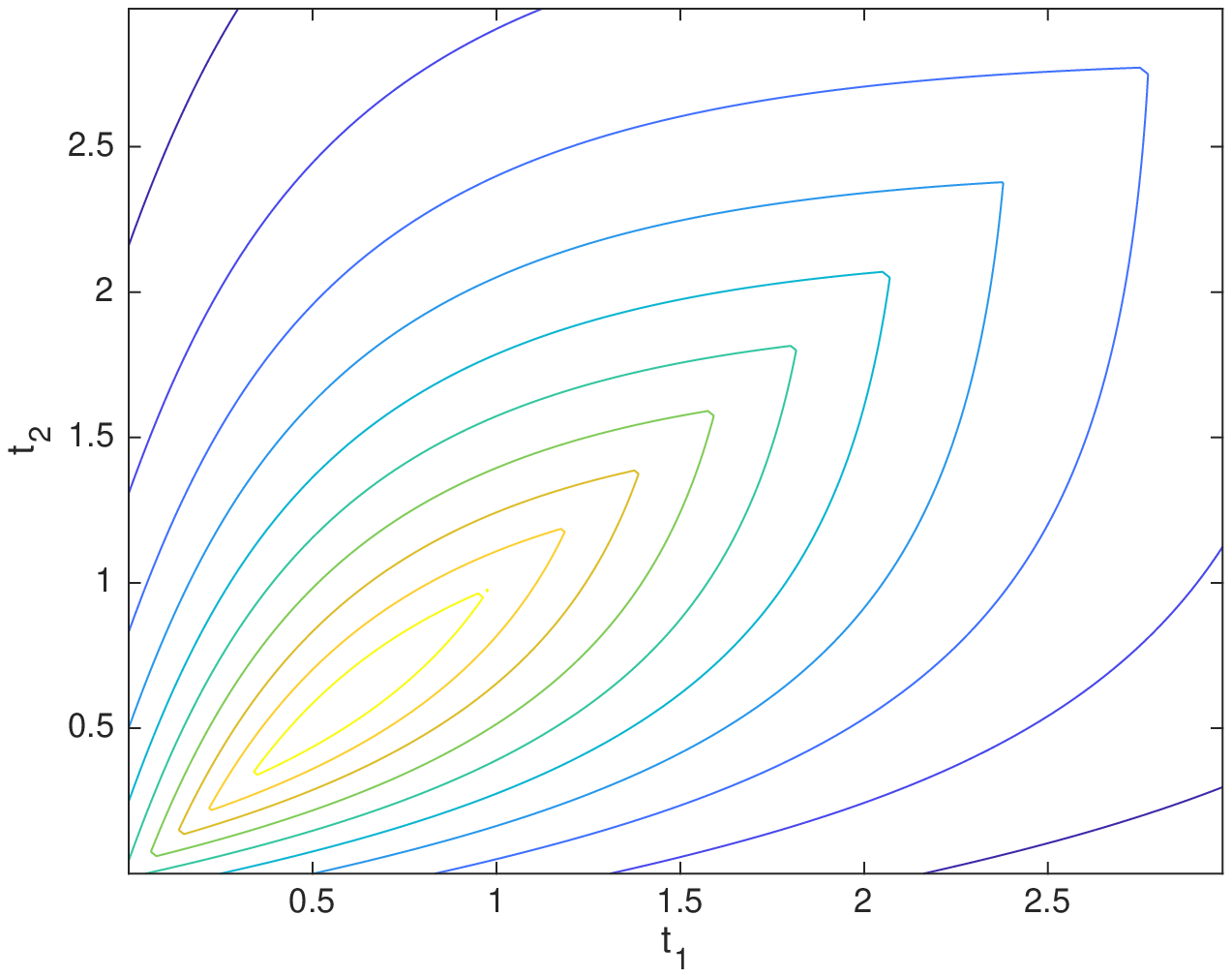}}
       {$f_{t}(t_1,t_2)$, $t=0$}
&
\subf{\includegraphics[width=72.5mm]{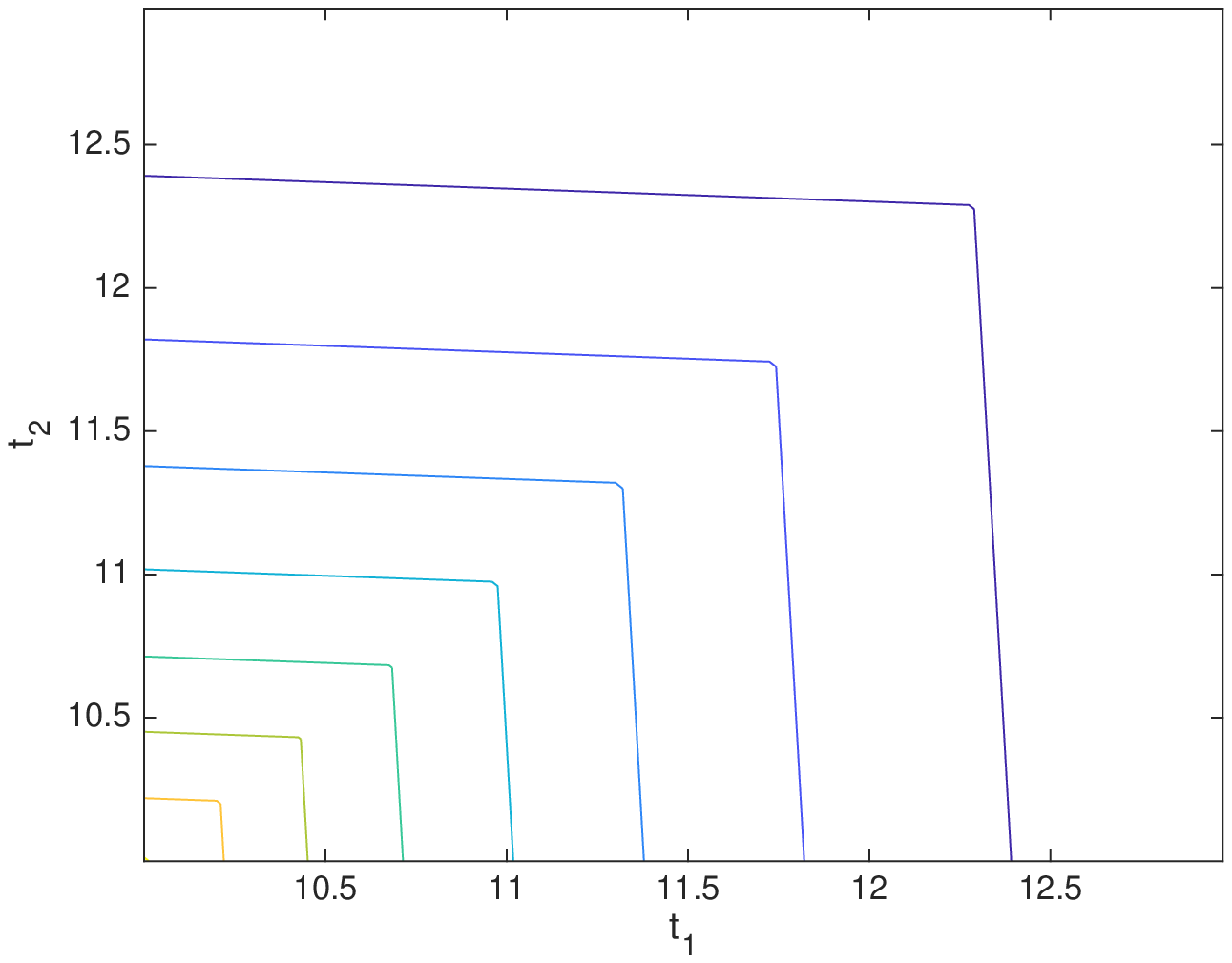}}
     {$f_{t}(t_1,t_2)$, $t=10$.}
\\
\end{tabular}
\label{fig:contour}\caption{Contour plots of $f_{i,t}(t_1,t_2)$ and $f_{t}(t_1,t_2)$ and their stationary shapes.}
\end{figure}

The stationary values of $\mathbf{S}_{11}(t)$ and $\boldsymbol{\alpha}(t)$ are given as $t\rightarrow \infty$ respectively by

  \begin{align*}
\mathbf{S}_{11}(\infty)=
\left(\begin{array}{ccc}
 1 & 0 & 0  \\
  0  & 1 & 0  \\
  0 & 0 & 1 
\end{array}\right) \;\; \textrm{and} \;\;
\boldsymbol{\alpha}(\infty)=
\left(\begin{array}{c}
 0.0245 \\
  0.0468 \\
  0.0381 \\
\end{array}\right),
\end{align*}
%

\noindent from which it follows that, conditional it is still alive in the long run, $X$ moves according to the Markov process $X^{(2)}$. Despite $\mathbb{1}^{\top}\boldsymbol{\alpha}(0)=1$, we have for $t>0$ that $0<\mathbb{1}^{\top}\boldsymbol{\alpha}(t)<1$. In all cases, the density has symmetry property for the values of parameters chosen. The contour plot in Figure 4 confirms this observation. 

The shape of marginal distributions $f_{\tau_1}^{(i)}(t_1\vert t)$ and $f_{\tau_1}(t_1\vert t)$ of $\tau_1$ are presented in Figure 5 for different values of speed parameter $\psi$. By symmetry, the marginal distributions of $\tau_2$ also share the same shape. Despite changing its shapes as $t$ increases, the pictures strongly suggest that the marginal pdf is left skewed and has zero value at zero for $f_{\tau_1}^{(i)}(t_1\vert t)$, and positive value for $f_{\tau_1}(t_1\vert t)$. They both decay to zero as $t_1$ increases as shown in more details in Figure 6. We also notice from the latter that the marginal probability density functions of $\tau_1$ and $\tau_2$ do not have common shape when the exit parameter $\delta_2$ changes its value. 

\begin{figure}[ht!]
\centering
\begin{tabular}{cc}
\subf{\includegraphics[width=72.5mm]{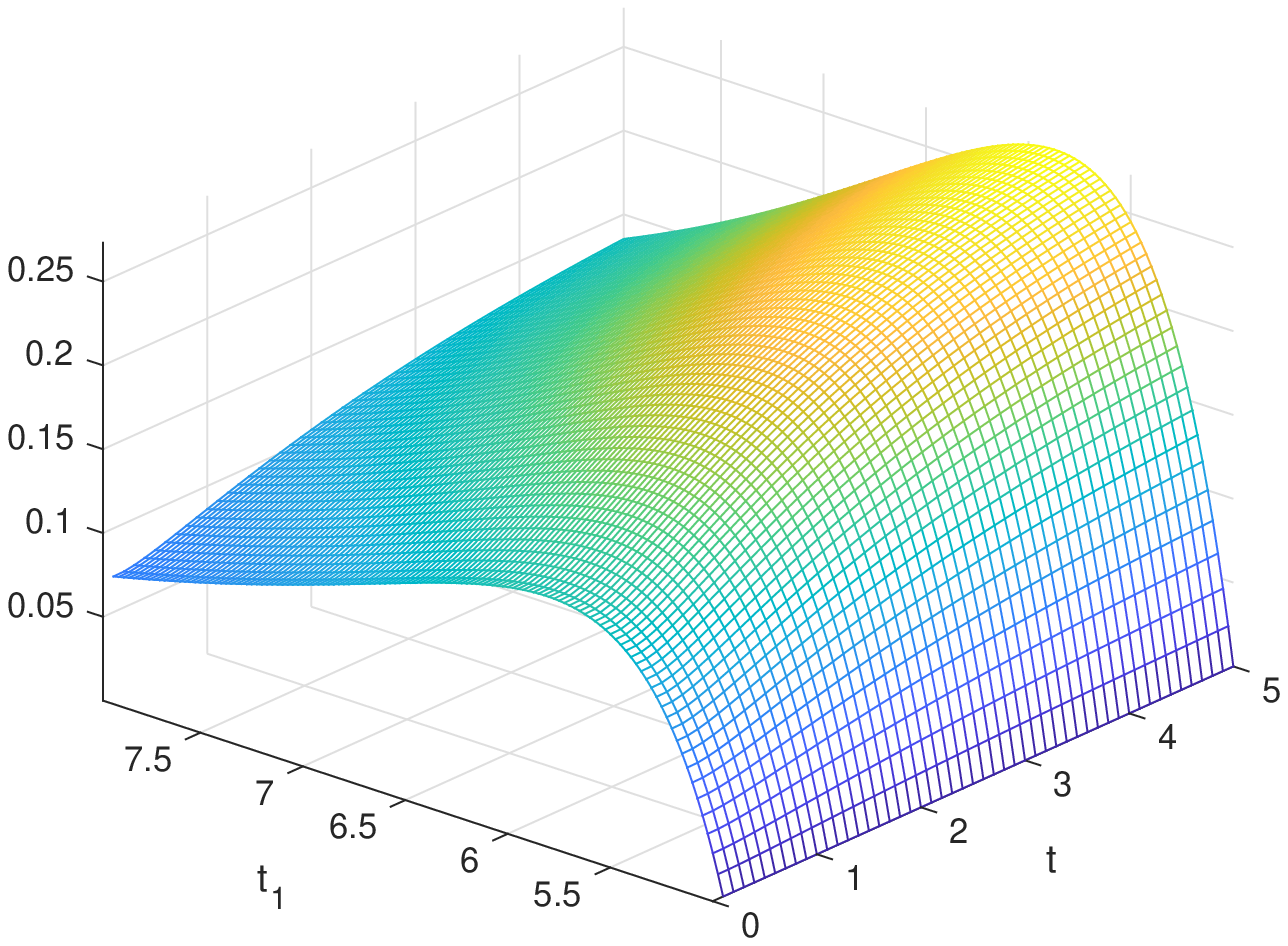}}
       { $f_{\tau_1}^{(i)}(t_1\vert t)$ with $i=2$, $\psi=0.5$.}

&
\subf{\includegraphics[width=72.5mm]{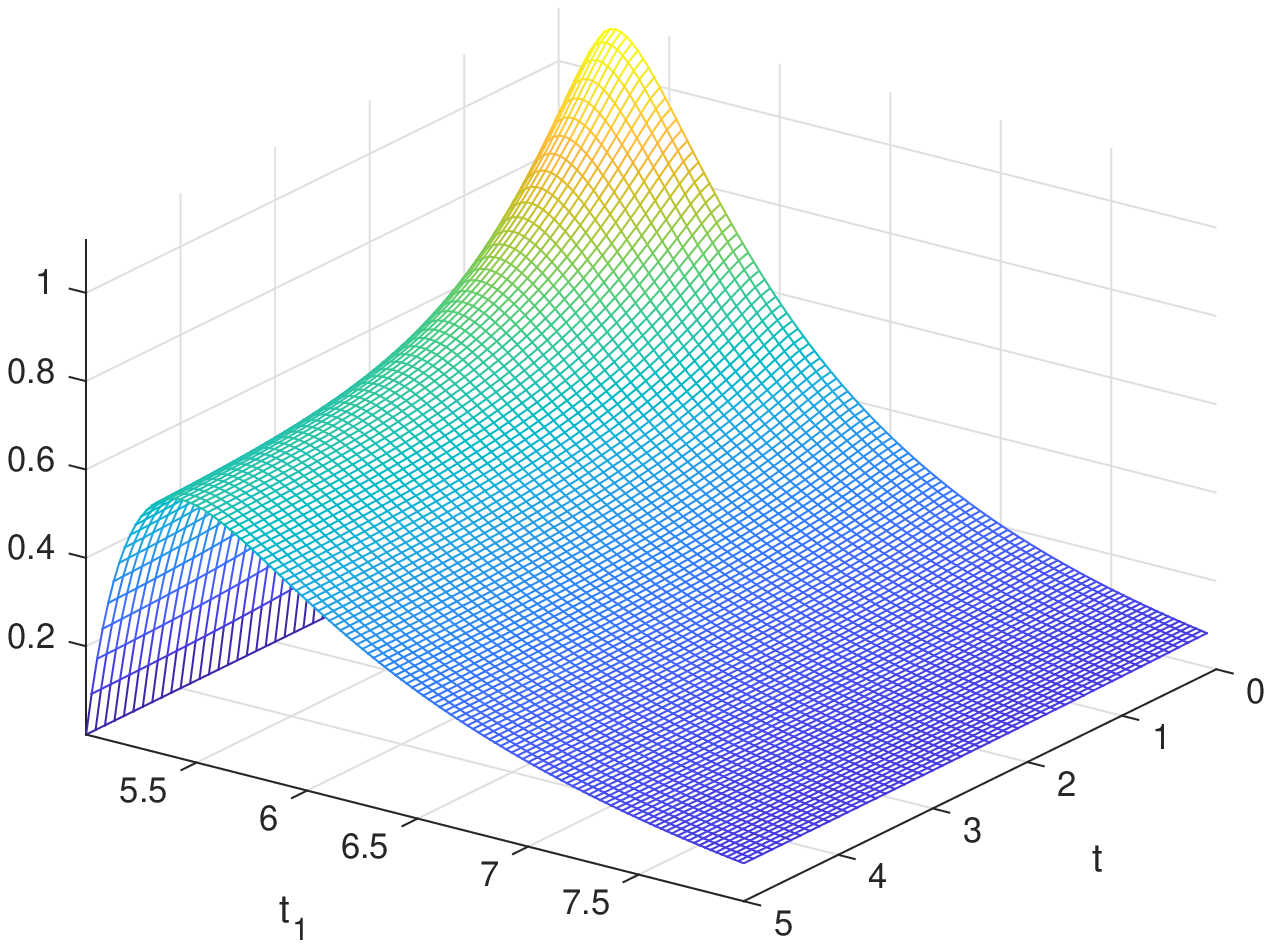}}
     { $f_{\tau_1}^{(i)}(t_1\vert t)$ with $i=2$, $\psi=2$.}
\\
\subf{\includegraphics[width=72.5mm]{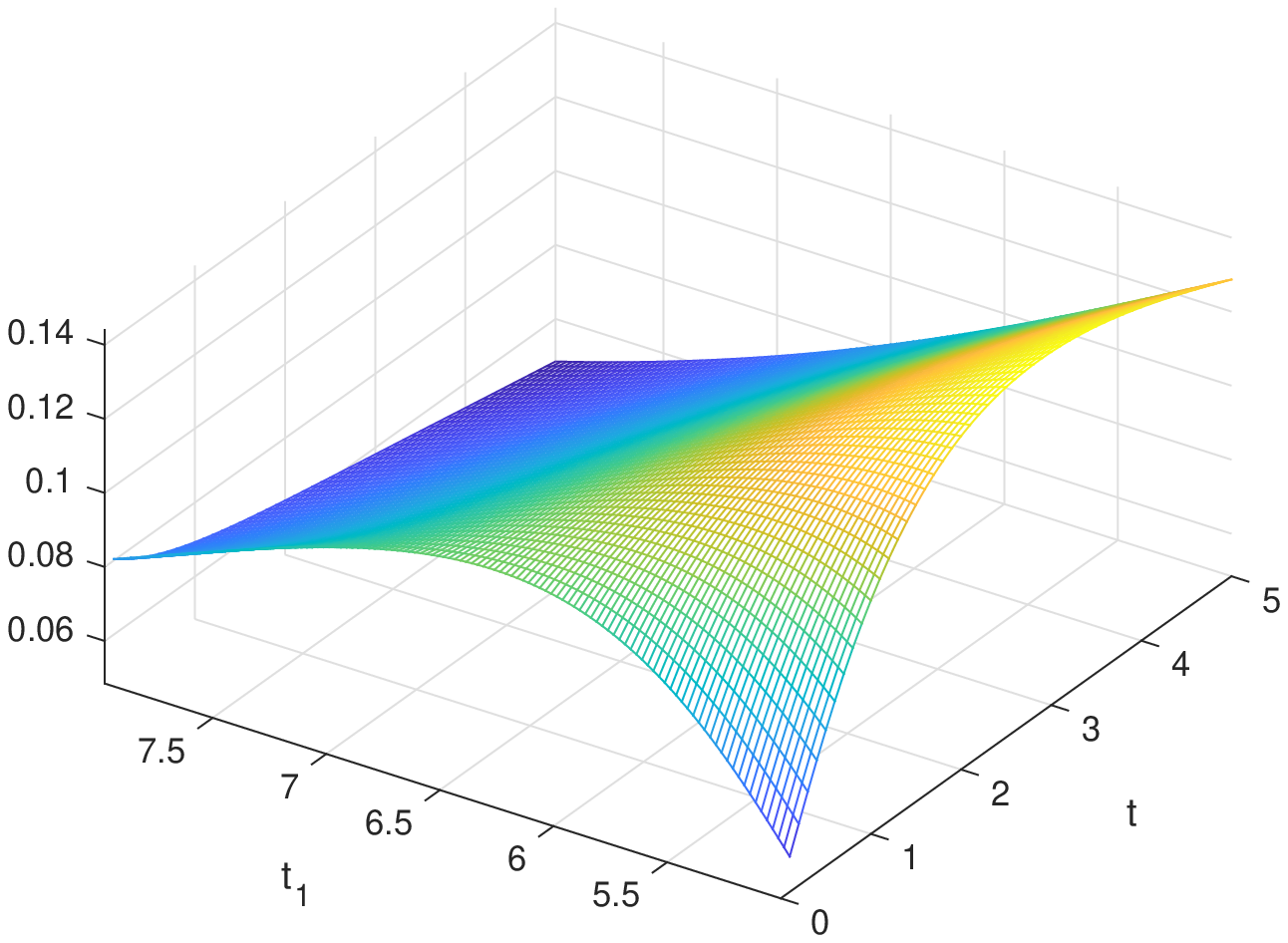}}
       { $f_{\tau_1}(t_1\vert t)$ with $\psi=0.5$.}
&
\subf{\includegraphics[width=72.5mm]{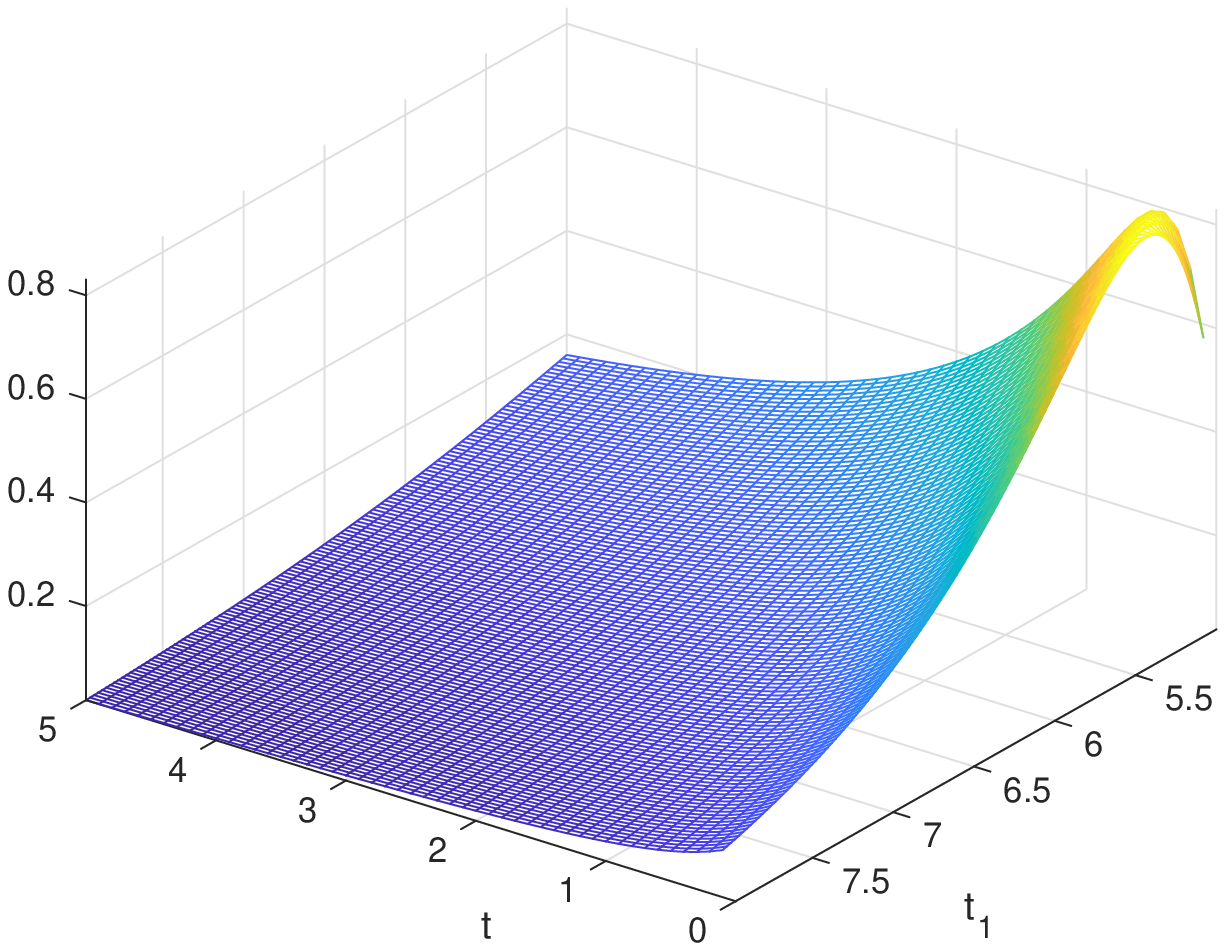}}
     { $f_{\tau_1}(t_1\vert t)$ with $\psi=2$.}
\\
\end{tabular}
\label{fig:marginalpdf}\caption{ Marginal pdfs  $f_{\tau_1}^{(i)}(t_1\vert t)$ and  $f_{\tau_1}(t_1\vert t)$ of $\tau_1$ as function of $t_1$ and $t$.}
\end{figure}

\begin{figure}[ht!]
\centering
\begin{tabular}{cc}
\subf{\includegraphics[width=72.5mm]{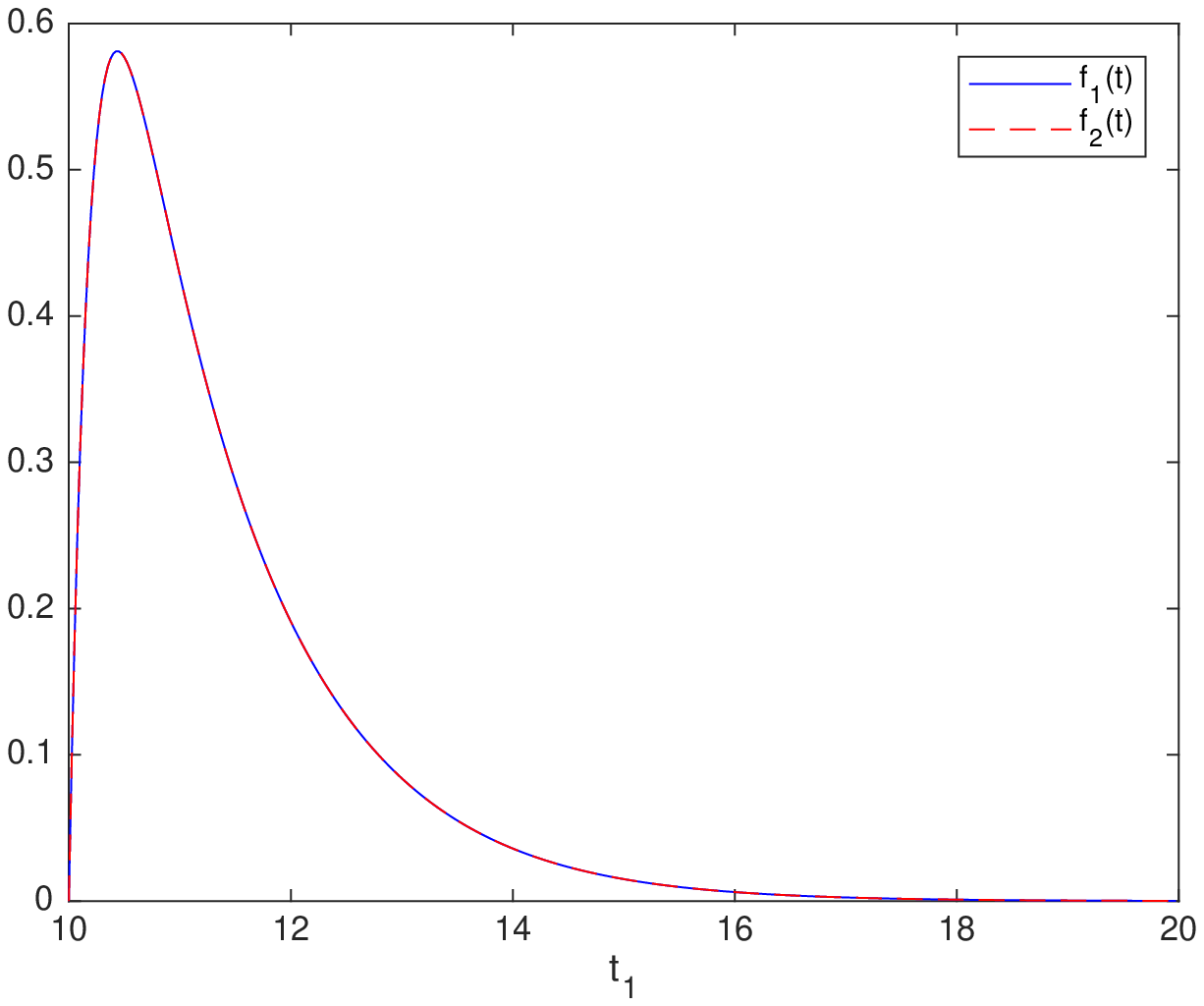}}
       { $i=2$, $\psi=2$, $\delta_2=1$.}

&
\subf{\includegraphics[width=72.5mm]{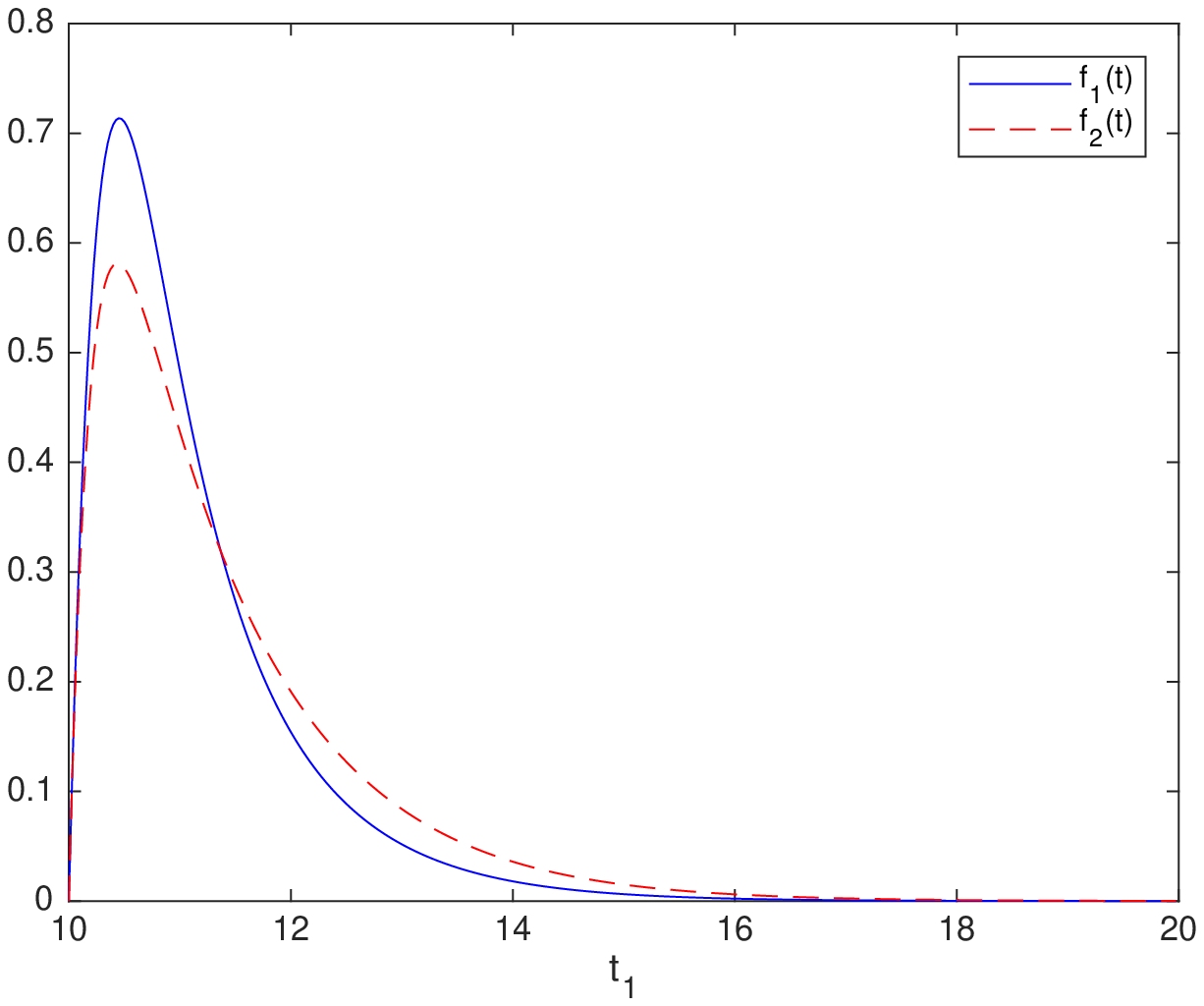}}
     {$i=2$, $\psi=2$, $\delta_2=5$.}
\\
\subf{\includegraphics[width=72.5mm]{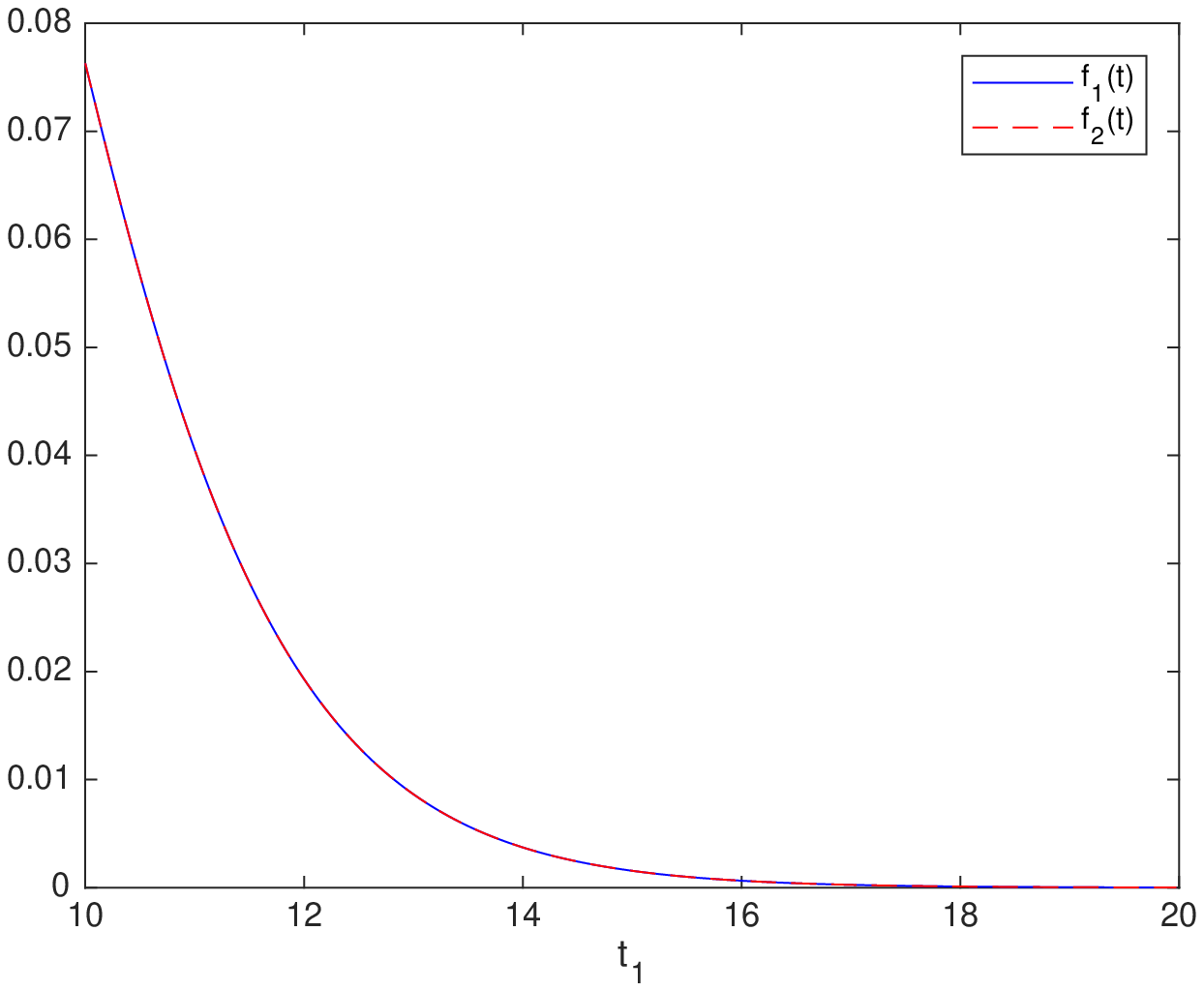}}
       { $\psi=2$, $\delta_2=1$.}
&
\subf{\includegraphics[width=72.5mm]{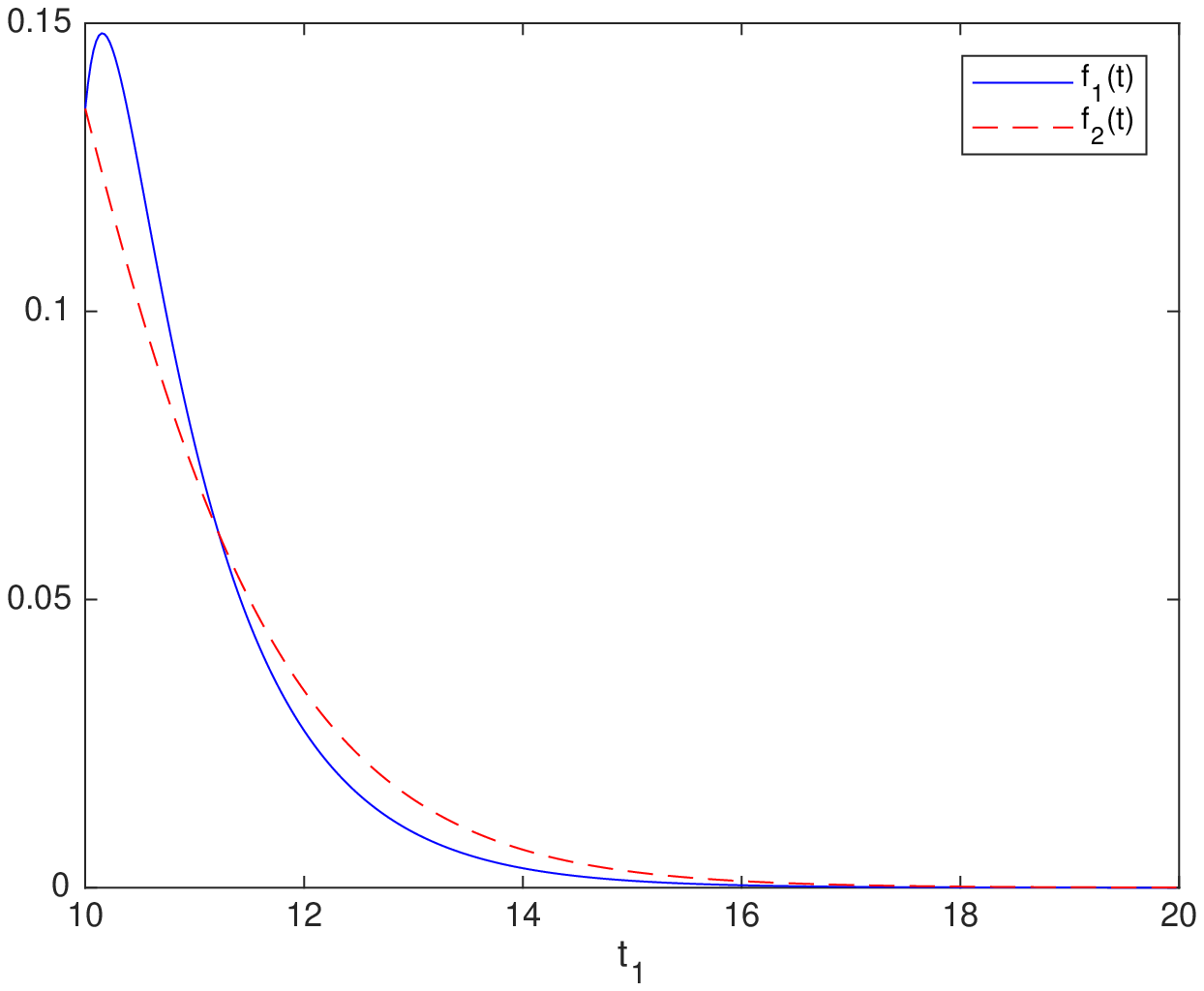}}
     { $\psi=2$, $\delta_2=5$.}
\\
\end{tabular}
\label{fig:marginalpdf2}\caption{ Stationary distribution of $f_{\tau_k}^{(i)}(t_1\vert t)$ and $f_{\tau_k}(t_1\vert t)$, $k=1,2$, for $t=10$.}
\end{figure}

\section{Conclusions}

We have introduced a new class of conditional joint probability distributions of first exit times of a continuous-time stochastic process defined as a finite mixture of right-continuous Markov jump processes, with overlapping absorbing sets, moving at different speeds on the same finite state space, while the mixture occurs at a random time. Distributional properties of the mixture process were discussed in general case, in particular the Bayesian update on the probability of starting the process in any phase of the state space at a given time, based on past observation of the process. The results presented in this paper generalizes that of given in \cite{Frydman2005}, \cite{Frydman2008} and \cite{Surya2018}. The new distributions form non-stationary functions of time and have the ability to capture heterogeneity and path dependence when conditioning on the available information (either full or partial) of the process. The attribution of path dependence is due to non-Markov property of the process.

Distributional identities are presented explicitly in terms of the intensity matrices of the underlying Markov processes, the Bayesian updates of switching probability and of the probability of starting the process in any of the phases in the state space, despite the fact that the mixture itself is non-Markov. In particular, the initial distributions form of a generalized mixture of the multivariate phase-type distributions of Assaf et al. \cite{Assaf1984}. When the underlying processes move at the same speed, in which case the mixture becomes a simple Markov jump process, heterogeneity and path dependence are removed and the initial distributions reduce to \cite{Assaf1984}. As in the univariate case, the probability distributions have dense and closure properties under finite convex mixtures and finite convolutions. These properties emphasize the additional importance of the new distributions.

As we have shown in this paper, the Markov mixture process forms a tractable construction of a continuous-time stochastic process having non-Markov property. Given their availability in explicit form and tractability, the Markov mixture process and the new conditional multivariate probability distributions should be able to offer appealing features for variety of applications, in which the Markov chains and the (multivariate) phase-type distributions have played central role.

 \section{Acknowledgments}

 The author acknowledges some inputs and suggestions from participants of Risk and Stochastic Seminar of London School of Economics, Hugo Steinhaus Center of Mathematics Seminar of Wroclaw University of Science and Technology, and Insurance: Mathematics and Economics Conference during 16-18 July 2018 at UNSW Sydney, at which part of the results of this work were presented. Financial support from Victoria University of Wellington \# 218772 is also acknowledged.

\end{document}